\definecolor{dgreen}{rgb}{0.0, 0.5, 0.0}
\definecolor{byzantium}{rgb}{0.44, 0.16, 0.39}
\newtheorem{prop}{Proposition}%[section]
\newtheorem{theorem}{Theorem}%[section]
\newtheorem{hyp}{Hypothesis}%[H\arabic{hyp}]
\newtheorem{lemma}{Lemma}%[section]
\newtheorem{rem}{Remark}[section]%[P\arabic{prob}]
\newtheorem{definition}{Definition}%[section]
\DeclareRobustCommand\widecheck[1]{{\mathpalette\@widecheck{#1}}}
\def\@widecheck#1#2{%
    \setbox\z@\hbox{\m@th$#1#2$}%
    \setbox\tw@\hbox{\m@th$#1%
       \widehat{%
          \vrule\@width\z@\@height\ht\z@
          \vrule\@height\z@\@width\wd\z@}$}%
    \dp\tw@-\ht\z@
    \@tempdima\ht\z@ \advance\@tempdima2\ht\tw@ \divide\@tempdima\thr@@
    \setbox\tw@\hbox{%
       \raise\@tempdima\hbox{\scalebox{1}[-1]{\lower\@tempdima\box
\tw@}}}%
    {\ooalign{\box\tw@ \cr \box\z@}}}
\def\R {\mathbb{R}}
\def\E {\mathbb{E}}
\def\F {\mathcal{F}}
\newcommand{\dsp}{\displaystyle}
\newcommand{\sumj}{\sum_{j=1}^N}
\newcommand{\supp}{\mathrm{supp}}
\newcommand{\schema}[1]{\b{\sc #1}}
\newcommand{\ty}{\tilde{y}}
\DeclareMathOperator*{\esssup}{ess\,sup}
\newcommand{\xz}{{\xi,\zeta}}
\newcommand{\tw}{\tilde{w}}
\newcommand{\txi}{\tilde{\xi}}
\newcommand{\tz}{\tilde{\zeta}}
\newcommand{\bxi}{\bar X_i^N}
\newcommand{\bxj}{\bar X_j^N}
\newcommand{\bwij}{\bar W_{ij}^N}
\newcommand{\lij}{\bar \lambda_t^{N,i,j}}
\newcommand{\lijun}{\bar \lambda_t^{N,i_1,j_1}}
\newcommand{\tx}{{\tilde x}}
\newcommand{\xztt}{{\txi,\tz}}
\newcommand{\intt}{\int_{I \times \R^d \times \R^d \times \R}}
\newcommand{\inttt}{\int_{I^2 \times \R^d \times \R^d \times \R}}
\DeclareMathOperator{\divop}{div}
\newcommand{\croch}[1]{\left[ #1 \right]} 
\newcommand{\pare}[1]{\left( #1 \right)}
\begin{document}

\title{Mean-field limits  for interacting particle systems on general adaptive dynamical networks}

\author{
Nathalie Ayi\thanks{Sorbonne Universit\'e, Universit\'e Paris Cit\'e, CNRS, Inria, Laboratoire Jacques-Louis Lions (LJLL), Intitut Universitaire de France, F-75005 Paris, France} }

%\date{}
\maketitle
%\tableofcontents

\bibliographystyle{abbrv}

%%%%%%%%%%

\abstract{We study the large-population limit of interacting particle systems evolving on adaptive dynamical networks, motivated in particular by models of opinion dynamics. In such systems, agents interact through weighted graphs whose structure evolves over time in a coupled manner with the agents’ states, leading to non-exchangeable dynamics.  In the dense-graph regime, we show that the asymptotic behavior is described by a Vlasov-type equation posed on an extended phase space that includes both the agents’ states and identities and the evolving interaction weights.
We establish this limiting equation through two complementary approaches. The first follows the  mean-field methodology in the spirit of Sznitman \cite{Snitzman}. 
In this framework, we impose the additional assumption that the weight dynamics is independent of one of  the agent’s states, an assumption that remains well motivated from a modeling perspective and allows for a direct derivation of the mean-field limit. The second approach is based on the graph limit   framework and is formulated in a deterministic setting. This perspective makes it possible to remove the aforementioned restriction on the weight dynamics and to handle more general interaction structures.
Our analysis includes well-posedness and stability results for the limiting Vlasov-type equation, as well as quantitative estimates ensuring the propagation of independence. We further clarify the relationship between the continuum (graph limit) formulation and the mean-field limit, thereby providing a unified description of the asymptotic dynamics of interacting particle systems on adaptive dynamical networks.}

\section{Introduction}

Interacting particle systems are powerful mathematical models used to describe collective dynamics. They naturally arise in various contexts, such as modeling the evolution of opinions within a population \cite{HK}, or the flocking and swarming behavior observed in groups of birds or schools of fish \cite{Aoki82, Ballerini08, Lopez12}. If we interpret each   individual within the population as a ``particle'', the early studies of such models mostly focused on indistinguishable particles, i.e. situations where exchanging the roles of two particles does not alter the dynamics of the system.
In this setting, the mean-field approach has been extensively developed in the literature (see  \cite{braun1977,Dobrushin79,Snitzman} for instance). This framework involves a change of perspective: instead of tracking the precise trajectory of each individual over time, we study the statistical distribution of positions. The main object of interest becomes a measure $\mu_t(x)$, representing the probability of finding an agent with opinion $x$ at time $t$ in the context of opinion dynamics.

However, there are situations, particularly in opinion dynamics, where the identity of each particle matters. Indeed, human interactions depend on social relationships, which determine both whom we interact with and how much influence others have on us. The mathematical structure that captures this idea is a graph. A typical model is given by the following equation:

\begin{equation}\label{eq:non-exch_part_syst}
\displaystyle \frac{d}{dt} x_i^N(t) = \frac{1}{N} \sumj w_{ij}\phi(x_i^N(t),x_j^N(t)), \, \text{ for } i=1, \dots, N,
\end{equation}
 where $t \geq 0$ is the time, $x_i^N\in\mathbb{R}^d$ with $d \geq 1$, is the state variable, representing for instance the opinion of agent $i$ and $\phi$ is the interaction function. We can interpret the previous equations as a system of differential equations on the weighted graph $$G_N = <V(G_N),E(G_N), w^N>$$ whose vertex set is $V(G_N) = \{1, \dots, N\}$, edge set is $E(G_N) = \{1, \dots,N \}^2$  and the weights are $w^N=(w_{ij}^N)_{1 \leq i,j \leq N}$ with $w_{ij} \in \R$ for all $i,j \in \{1\dots,N \}$. 
 
A natural question is then whether classical mean-field limit results can be extended to this more complex framework, where the particle system becomes non-exchangeable. Recent progress in graph theory, building on the seminal work of Lovász \cite{Lovasz12}, has yielded positive answers to this question. These results are supported by a substantial and growing literature (see, e.g., \cite{KaliuzhnyiMedvedev18,ChibaMedvedev19,Kuehn20,JabinPoyatoSoler21,KuehnXu22}), as well as by the recent survey \cite{AyiPouradierDuteil24} and the lecture notes on the subject \cite{Ayi2026}. Depending on whether the graph is dense, sparse, or of intermediate density, different limiting objects arise to describe the asymptotic behavior of a convergent sequence of graphs and  a limit equation on $\mu_t^\xi(x)$, representing the probability of finding an agent with identity $\xi \in [0,1]$ and opinion $x$ at time $t$, can be obtained.

 In the dense case, which is the one similar to framework we will adopt on this paper, we exploit the convergence of a sequence of graphs toward a graphon (short for \textit{graph function}). The emergence of graphons as natural limits of large graphs can be intuitively understood by considering the visual convergence of their pixel matrices, a pixel version of the adjacency matrices where 1’s correspond to black squares and 0’s to white ones. For further details, we refer an interested reader to the lecture notes \cite{Ayi2026}.

In this paper, the framework we will focus on is that of interacting particle systems on adaptive dynamical networks, i.e. graphs whose structure evolves dynamically and in a coupled manner with the node states. Returning to our example of opinion dynamics, individuals’ opinions are influenced by their social contacts, but these relationships themselves may evolve: one may stop interacting with, or stop being influenced by, people holding very different opinions, while forming new connections with like-minded individuals. This adaptive setting thus provides a more realistic modeling framework than a purely static graph, encompassing a wide range of real-life applications   (see \cite{rev_kuehn} for a review on the matter).

In this setting, we may begin by considering a particular example of interacting particle systems on an adaptive dynamical network, namely the one introduced in \cite{McQuadePiccoliPouradierDuteil19} and whose large-population limit was studied in \cite{AyiPouradierDuteil21}. The system can be seen as an extended model of an exchangeable interacting particle system where, in addition to holding an opinion, each agent is assigned an influence weight that evolves over time in a coupled way with the opinions. The weight can be interpreted as a measure of charisma or popularity and the system writes as follows:
\begin{equation}\label{eq:weightvaryingdynamics}
\begin{cases}
\displaystyle
\frac{d}{dt}x_i^N(t) = \frac{1}{N}\sum_{j=1}^N m_j^N(t)\phi(x_i^N(t), x_j^N(t)) \\
\displaystyle \frac{d}{dt}m_i^N(t) = \psi_i(x^N(t),m^N(t))
\end{cases}
\end{equation}
\noindent with $x^N=(x_1^N, \dots, x_N^N)$, $m^N=(m_1^N, \dots, m_N^N)$ and where $x_i^N \in\mathbb{R}^d$ is, as before,  the opinion of agent $i$,  $m_i^N \in \R^+$ the agent's weight. The distinctive feature of this graph is that all the weight of the edges emanating from a given vertex are identical. It should be noted, however, that this restriction results from the specific construction of the model and is not required to obtain the large-population limit. Indeed, in \cite{Throm_2024}, under the same type of regularity assumptions, the author removed this restriction to study the system 
\begin{equation}\label{eq:generalweightvaryingdynamics}
\begin{cases}
\displaystyle
\frac{d}{dt}x_i^N(t) = \frac{1}{N}\sum_{j=1}^N w_{ij}^N(t)\phi(x_i^N(t),x_j^N(t)) \\
\displaystyle \frac{d}{dt} w_{ij}^N(t) = \psi_{ij}(x^N(t),w^N(t)).
\end{cases}
\end{equation}
In the aforementioned papers, for non-exchangeable particle systems in this context of adaptive dynamical networks, only what is known as the \textit{graph limit} had been obtained so far. This approach consists in deriving an integro-differential Euler-type equation for the quantity $x(t,\xi)$, which represents the opinion of agent with identity $\xi \in [0,1]$ at time $t$. Conceptually, this corresponds to replacing discrete indices by continuous ones, sums by integrals, and a convergent sequence of weighted graphs by a graphon. For instance, in the static case, the limiting equation associated with system \eqref{eq:non-exch_part_syst} takes the form
\begin{equation}\label{eq:GL_nonexchangeable}
 \partial_t x(t,\xi) = \int_{[0,1]} w(\xi,\zeta) \phi(x(t,\xi),x(t,\zeta)) d\zeta 
\end{equation}
for $t \geq 0$, $\xi\in [0,1]$.
%(Equation)
This approach, also commonly referred to as   \textit{continuum limit} can be extended in the framework of \cite{AyiPouradierDuteil21} and the more general case \cite{Throm_2024} where it leads for the latter to convergence to 
\begin{equation}
\left\{\begin{array}{l}
\displaystyle \partial_t x(t,\xi) = \int_I w(t,\xi,\zeta)\phi(x(t,\xi),x(t,\zeta)) d\zeta \\
\partial_t w(t,\xi,\zeta) = \psi(\xi, \zeta,x(t,\cdot),w(t,\cdot, \cdot)).
\end{array}\right.
\label{eq:GL_Throm}
\end{equation} 
The first mean-field limit result, obtained in the slightly different context of digraph measures, which makes it possible to handle sparse graphs, appeared in \cite{gkogkas2023mean}. In that work, the author established the mean-field limit for systems of the form 
\begin{equation}\label{eq:weightvaryingdynamics2}
\begin{cases}
\displaystyle
\frac{d}{dt}x_i^N(t) = \frac{1}{N}\sum_{j=1}^N w_{ij}^N(t)\phi(x_i^N(t),x_j^N(t)) \\
\displaystyle \frac{d}{dt}w_{ij}^N(t) = - \varepsilon w_{ij}^N + \varepsilon H(x_i^N(t),x_j^N(t))
\end{cases}
\end{equation}
It is important to emphasize that the decoupling between the weights and the opinions in the weight dynamics is essential. More precisely, they need to be able to express the evolution of the weights in terms of their initial values and the agents’ opinions, and then substitute this expression back into the first equation. This can be achieved by applying Duhamel’s formula:
\begin{equation}
w_{ij}^N(t) = w_{ij}^{N,0}(t) e^{-\varepsilon t} + \int_0^t \varepsilon H(x_i^N(s),x_j^N(s)) e^{- \varepsilon(t-s)} ds 
\end{equation}
and leads to study the equation
\begin{equation}\label{eq:weightvaryingdynamics2bis}
\frac{d}{dt}x_i^N(t) = \frac{1}{N}\sum_{j=1}^N\left( w_{ij}^{N,0}(t) e^{-\varepsilon t} + \int_0^t \varepsilon H(x_i^N(s),x_j^N(s)) e^{- \varepsilon(t-s)} ds\right) \phi(x_i^N(t),x_j^N(t)) 
\end{equation}
More recently, in \cite{Throm2025}, they extended this result by obtaining the mean-field limit of the interacting particle system 
\begin{equation*}\label{eq:kuramoto}
\displaystyle \frac{d}{dt} x_i =  \frac{1}{N}  \sum_{j=1}^N  K_t\left(w_{ij}^{N,0}, x_i^N |_{[0,t]}, x_j^N |_{[0,t]}  \right) \quad \text{ for all } i\in \{1, \dots, N \} 
\end{equation*}

In a different direction, a related result on the propagation of independence  has been obtained in \cite{zhou2025} for the complex interacting particle system
\begin{equation*}\label{eq:kuramoto-gen}
\begin{cases}
\displaystyle d x_i = \mu(x_i) dt + \nu d{B}t^i + \frac{1}{N} \sum_{j=1}^N w_{ij} \phi\left( x_i, x_j\right) dt,\\
\displaystyle d w_{ij} = \alpha(x_i,x_j) w_{ij} dt + H( x_i, x_j) dt,
\end{cases}
\end{equation*}
His result does not constitute a mean-field limit in the strict sense, since it does not yield a Vlasov-type equation. Nevertheless, he proves a propagation of independence property: by introducing a corresponding McKean-Vlasov SDE system, he is able to control the error incurred when approximating the particle system by this alternative one. This type of control typically represents, as will be seen throughout this paper, the first crucial step toward rigorously establishing a mean-field limit.

In this paper, we aim to derive the mean-field limit of \eqref{eq:generalweightvaryingdynamics} for mean-field type interactions for the weight dynamics. More precisely, we are interested into the following adaptive dynamical network system 
 \begin{align} \label{eq:particle_syst}
\left\{ \begin{aligned}
 \frac{d}{dt} X_i^N & =  \frac{1}{N} \sum_{j=1}^N W_{ij}^N \phi(t,X_i^N,X_j^N)\\
  \frac{d}{dt}  W_{ij}^N  & =  \frac{1}{N^2} \sum_{i_1=1}^N  \sum_{j_1=1}^N  \Lambda_{ij}(X_i^N,X_j^N,W_{ij}^N,X_{i_1}^N,X_{j_1}^N,W_{{i_1}{j_1}}^N)
 \end{aligned} \right.
 \end{align}
where 
 \begin{equation}
 \Lambda_{ij}(x,y,w, \tx, \ty, \tw) :=   \int_{I_i^N\times I_j^N} N^2 \Lambda(\bar \xi,\bar \zeta,x, y,w, \tx, \ty, \tw)   d \bar \xi d \bar \zeta,
 \end{equation}
 with, for every $ k \in \left\{1, \dots, N\right\}, \,  I_k^N :=\left[\frac{k-1}{N},\frac{k}{N}\right)$, and where the interaction function $\phi: \R_+  \times  \R^d \times \R^d  \to \R $ and the edge dynamics $\Lambda: [0,1]^2 \times  \R^d \times \R^d \times \R  \times   \R^d \times \R^d \times \R \to \R$ are functions whose properties will be precised later. 
\begin{rem}
This choice of weight dynamics situates our work within the same framework as previous studies on large-population limits for adaptive dynamical networks \cite{AyiPouradierDuteil21,Throm_2024}, which have predominantly focused on the continuum limit, while here we will extend these results by presenting a rigorous mean-field analysis. 
\end{rem}

\begin{rem}From a modeling point of view, a recent work on the dynamics of complex systems \cite{Ana} has highlighted the limitations of models based exclusively on pairwise interactions between nodes. Indeed, many real-world systems are characterized by higher-order interactions involving multiple entities simultaneously, whose natural structure is more accurately described by topological objects such as simplicial complexes. In this setting, interactions are no longer restricted to vertices but may also involve edges, triangles, or higher-dimensional simplices, thereby capturing collective mechanisms that cannot be reduced to purely binary interactions between nodes. In particular, our weight dynamics  $\Lambda_{ij}$  naturally fits into this framework, as it depends not only on the states $X_i^N$ and $X_j^N$ of the nodes it connects and on the edge weight $W_{ij}^N$, but also on the states $X_{i_1}^N$, $X_{j_1}^N$ and weight $W_{{i_1}{j_1}}^N$ of neighboring vertices and edges, effectively incorporating higher-order, edge–edge interactions.
 Such edge–edge interactions can in turn give rise to novel collective phenomena, including discontinuous phase transitions that are absent in classical node-based models.
\end{rem}

In the whole paper, we will assume that for $i=j$, $W_{ij}(t)=0$ for all $t \geq 0$. This assumption reflects the absence of loops in the graph and models the fact that self-interactions are not taken into account.

 In this paper, we obtain a rigorous proof for the mean-field limit. More precisely, we prove the convergence as $N$ goes to $\infty$ of the particle system to the associated Vlasov-type equation :
\begin{align}\label{eq:vlasov-equation}
\left\{ \begin{aligned}
&\partial_t \mu_t^\xz (x,y,w)+\divop_{x,y,w}\left(F_\Lambda[\mu_t](\xi,\zeta,x,y,w)\,\mu_t^\xz(x,y,w)\right)=0,\quad t \in [0,T],\,x, \,y\in \mathbb{R}^d,\,w \in \R,\,\xi, \zeta\in [0,1],\\
&\mu_{t=0}^\xz=\mu_0^\xz.
\end{aligned} \right.
\end{align}
 with \begin{equation} \label{eq:vlasov-force}
F_\Lambda[\mu_t](\xi,\zeta,x,y,w) = \left(  \begin{array}{c}
\displaystyle  \intt \tw \phi(t,x,\ty)  \mu_t^{\xi,\tz} (d\tx,d\ty, d\tw) d\tz  \\ 
\displaystyle  \intt \tw \phi(t,y,\ty) \mu_t^{\zeta,\tz} (d\tx,d\ty, d\tw) d\tz  \\ 
\displaystyle \inttt \Lambda( \xi, \zeta, x,y,w, \tx, \ty, \tw) \mu_t^\xztt(d\tx,d\ty,d\tw)d \txi d \tz
\end{array} \right)
\end{equation} 
and  where $T>0$  denotes a constant fixed throughout the paper.
Here, the limit measure $\mu_t^\xz (x,y,w)$ represents the probability measure of finding agents of respective identities $\xi$ and $\zeta$ with respective opinions $x$ and $y$ such that the weight from  agent $\xi$ to agent $\zeta$ is $w$. 
A fundamental ingredient of our approach is that the limiting dynamics are formulated on an extended phase space, which includes both the particle identities and states and the adaptive interaction weights. This extension is not merely a technical convenience but a structural necessity: in adaptive networks, the evolution of the weights is intrinsically coupled to the microscopic states, and any formulation restricted to the state variables alone leads to an open, non-closed system.

To the best of our knowledge, this work provides the first rigorous derivation of a closed Vlasov-type equation for such a general class of adaptive interacting particle systems. The key to this closure lies precisely in the use of the extended phase space, which offers the minimal framework in which the macroscopic dynamics can be expressed in a self-consistent and closed form.

{
An important aspect of our analysis is that the derivation of this Vlasov-type equation in an extended phase space can be achieved through two distinct approaches. The first follows Sznitman’s mean-field methodology under a framework where $\Lambda_{ij}$   is required to be independent of  $X_i$. As will be discussed, this assumption remains well justified from a modeling viewpoint. The second approach builds on the graph-limit perspective, where this restriction is no longer necessary.}

The paper is organized as follows.
 In Section \ref{sec:model}, we first introduce the general setting and assumptions of the model, then describe the functional spaces used throughout the analysis, before stating the main theorems of the paper.  Section \ref{sec:well-posedness} is devoted to the well-posedness of the limiting Vlasov-type equation.
In Section \ref{sec:stability}, we prove stability estimates for this  equation, which play a crucial role in the subsequent analysis.
Section \ref{sec:independance} addresses the propagation of independence. We introduce an intermediate particle system  and provide a detailed control of the approximation error. We also compare the different graphon-based reformulations arising in this context.
The mean-field limit is established in Section \ref{sec:mfl}.
Finally, Section \ref{sec:link} discusses the connections between the continuum (graph-limit) formulation and the mean-field limit.

\section{Presentation of the model and main results} \label{sec:model}
\subsection{Setting}

In the following, we indifferently denote $|x|$ the absolute value or the norm on $\R^d$ depending on $x$ being in $\R$ or $\R^d$. We denote $B(0,r)$ the open ball of radius $r$ and $I$ the interval $[0,1]$.

Throughout the paper, we shall assume the classical regularity conditions. From this point onward, the interaction function will be required to satisfy the following assumptions:
\begin{hyp}\label{hyp:phi}
The interaction function $ \phi: \R_+  \times  \R^d \times \R^d  \to \R $  is bounded continuous  and Lispchitz in $(x,y)$ uniformly in $t$ i.e. there exists $L_\phi >0$ such that, for all $x_1,x_2,y_1,y_2 \in \R$
\begin{equation}
\left|\phi(t,x_1,y_1) - \phi(t,x_2,y_2)\right| \leq L_\phi \left( |x_1- x_2| + |y_1 - y_2| \right)
\end{equation}  and 
\begin{equation} \label{eq:bound_phi}
\|\phi\|_\infty := M_\phi < + \infty.
\end{equation}
We also assume that $\phi(0)=0$. 
\end{hyp}

With regard to the edge dynamics, we make the following assumption:
\begin{hyp}\label{hyp:psi}
The function $\Lambda: I^2 \times  \R^d \times \R^d  \times \R  \times  \R^d \times \R^d  \times \R   \to \R $  is Lispchitz in $(x,y,w,\tx,\ty,\tx)$ uniformly in $(\xi,\zeta)$ i.e. there exists $L_\Lambda>0$ such that, for a. e. $\xi,\zeta \in I$, for all $(x_1,y_1,w_1,\tx_1,\ty_1,\tw_1), (x_2,y_2,w_2,\tx_2,\ty_2,\tw_2) \in   \R^d \times \R^d  \times \R  \times  \R^d \times \R^d  \times \R$, we have
\begin{multline}
 \left|\Lambda(\xi,\zeta,x_1,y_1,w_1,\tx_1,\ty_1,\tw_1) - \Lambda(\xi,\zeta,x_2,y_2,w_2,\tx_2,\ty_2,\tw_2)\right| \\  \leq L_\Lambda \left(  |x_1- x_2| + |y_1 - y_2| + |w_1 - w_2| + |\tx_1- \tx_2| + |\ty_1 - \ty_2| + |\tw_1 - \tw_2|  \right) 
\end{multline} 
We also assume the following bound on $\Lambda$:  there exists $C_\Lambda>0$ such that for all  
\begin{equation}\label{eq:psisublin}
|\Lambda(\xi,\zeta,x,y,w,\tx,\ty,\tw)| \leq C_\Lambda (1+|w|).
\end{equation}
\end{hyp}
\begin{rem} 
Concerning the boundedness assumption on the weight dynamics stated  in equation \eqref{eq:psisublin}, which was already present in \cite{AyiPouradierDuteil21,Throm_2024}, although it may appear restrictive at first glance, its  purpose is to ensure that no finite-time blow-up occurs. This condition is consistent with our intention to work within a framework analogous to that of \cite{Medvedev14}, where the weights are assumed to belong to the space  $L^\infty$.
\end{rem}
We impose the following structural condition on $\Lambda$.
\begin{hyp} \label{hyp:struct_lambda}
The function $\Lambda: I^2 \times   \R^d \times \R^d \times \R \times    \R^d \times \R^d \times \R   \to \R $ is equal to $0$ as soon as two of its  last six  variables are equal.
\end{hyp}

\begin{rem} 
This hypothesis rules out contributions for which two of the last six arguments coincide, i.e. it excludes interaction terms where the auxiliary pair $(i_1,j_1)$ shares an index either with itself or with $(i,j)$. From a modeling standpoint, this prevents overlapping or double-counted effects arising from identical or partially coinciding links, thereby ensuring that each connection evolves solely under the influence of distinct pairs of particles in the network. Although this assumption will be used throughout the paper, it only plays a technical role in Section \ref{sec:independance}, where it is crucial for establishing the independence results.
\end{rem}
\subsection{Functional spaces} \label{sec:spaces}
We denote  $\text{BL}(X)$ the space of bounded-Lipschitz functions  $\phi$ such that the bounded-Lipschitz norm is finite {\it i.e.}
    $$
\| \phi \|_{\text{BL}} := \max \{ \| \phi\|_{L^\infty}, [\phi]_{\rm Lip}\} < + \infty
$$
 where $ [\phi]_{\rm Lip}$ stands for the Lipschitz constant of $\phi$.  We denote $\mathcal{P}(X)$ the space of probability measure on $X$ and we equipped it with the following distance: for $\mu_1, \mu_2 \in \mathcal{P}(X)$, 
 $$d_{\rm BL}(\mu_1,\mu_2)
=\sup_{\Vert \varphi\Vert_{\rm BL}\leq 1}\int_{X}\varphi(x)(d\mu_1(x)-d\mu_2(x)).$$

The solution to the Vlasov-type equation will actually consist of parametrized families of probability densities. One of the novelty, which is due to our context, is that it will be parametrized by two parameters. 
\begin{definition}[Borel family of probability measures]
Consider any $\nu$ in $\mathcal{P}(I)$ and let $(\mu^{\xi,\zeta})_{\xi,\zeta \in I} \in \mathcal{P}(\mathbb{R}^d \times \mathbb{R}^d \times \mathbb{R})$ be a parametrized family of probability measures defined for $\nu$-a.e. $\xi,\zeta \in I$. We say that $(\mu^{\xi,\zeta})_{\xi,\zeta \in I} $ is a Borel family if the map $(\xi,\zeta) \in I^2 \mapsto \mu^{\xi,\zeta}(B)$ is Borel-measurable for every Borel set $B \subset \R^d \times \mathbb{R}^d \times \mathbb{R}$.
\end{definition}
We also introduce the class of fibered probability measures. 
\begin{definition}[Fibered probability measures]
Consider any $\nu$ in $\mathcal{P}(I)$ . We define the space of fibered probability measures $$\mathcal{P}_\nu( I^2 \times \mathbb{R}^d \times \mathbb{R}^d \times \mathbb{R}) : = \{ \mu \in \mathcal{P}( I^2 \times \mathbb{R}^d \times \mathbb{R}^d \times \mathbb{R} ) : \Pi_{\xi,\zeta} \# \mu = \nu \otimes \nu\}$$
where $\Pi_{\xi,\zeta} (\xi, \zeta,x,y,w) = (\xi, \zeta)$ is the projection on the two first components.  We recall that for $\mu_1, \mu_2$ measures, $\mathcal{T}$ a measurable map and $A$ a measurable set, $\mu_1= \mathcal{T}\# \mu_2$  means $$ \mu_1(A) = \mu_2(\mathcal{T}^{-1}(A)).$$ Thus, $\Pi_{\xi,\zeta}  \# \mu$  stands for the marginal of $\mu$ in the two first components.
\end{definition}
There is actually a link between these two spaces of measures thanks to the classical disintegration theorem. 
\begin{theorem}[Disintegration theorem] \label{theo:disintegration}
Consider any $\nu$ in $\mathcal{P}(I)$ and $\mu \in \mathcal{P}_\nu( I^2 \times \mathbb{R}^d \times \mathbb{R}^d \times \mathbb{R})$. Then, there exists an almost everywhere uniquely defined Borel family $(\mu^{\xi,\zeta})_{\xi,\zeta \in I} \subset \mathcal{P}(\mathbb{R}^d \times \mathbb{R}^d \times \mathbb{R})$ so that 
\begin{multline} \label{eq:disintegration}
\int_{ I^2 \times \mathbb{R}^d \times \mathbb{R}^d \times \mathbb{R}} \varphi(\xi,\zeta,x,y,w) \mu(d\xi,d\zeta,dx,dy,dw) \\
= \int_I \int_I  \left( \int_{\mathbb{R}^d \times \mathbb{R}^d \times \mathbb{R}}  \varphi(\xi,\zeta,x,y,w) \mu^{\xi,\zeta}(dx,dy,dw) \right)  \nu(d\xi)  \nu(d\zeta) 
\end{multline}
for every bounded Borel-measure map $\varphi : I^2 \times \R^d \times  \R^d \times \R \to \R$. 

Conversely, for any Borel family $(\mu^{\xi,\zeta})_{\xi,\zeta \in I} \subset \mathcal{P}(\mathbb{R}^d \times \mathbb{R}^d \times \mathbb{R})$, we can associate a unique fibered probabilty measure $\mu \in \mathcal{P}_\nu( I^2 \times \mathbb{R}^d \times \mathbb{R}^d \times \mathbb{R}) $ so that \eqref{eq:disintegration} holds true. This can be summarize by the writing $$\mu(\xi,\zeta,x,y,w)  = \mu^{\xi,\zeta}(x,y,w) \otimes \nu(\xi)  \otimes \nu(\zeta) $$
\end{theorem}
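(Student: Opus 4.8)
The plan is to recognize this as an instance of the classical disintegration theorem and to reproduce its proof in the present fibered setting, the only mild novelty being that the base is the two‑dimensional parameter space $I^2$ carrying the product measure $\nu\otimes\nu$. All the spaces at play are Polish and the measures are Radon, so the structural hypotheses required for disintegration are met; writing $E := \mathbb{R}^d \times \mathbb{R}^d \times \mathbb{R}$ for the fiber space, I would proceed through existence, verification of the integral formula, uniqueness, and finally the converse construction.

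For existence, I would fix a Borel set $B \subset E$ and consider the finite measure $A \mapsto \mu(A \times B)$ on the Borel sets $A$ of $I^2$. The fibered assumption $\Pi_{\xi,\zeta}\#\mu = \nu\otimes\nu$ gives $\mu(A \times E) = (\nu\otimes\nu)(A)$, whence $\mu(A\times B) \le (\nu\otimes\nu)(A)$ and in particular $A \mapsto \mu(A\times B)$ is absolutely continuous with respect to $\nu\otimes\nu$. The Radon--Nikodym theorem then yields a density $g_B \in L^1(\nu\otimes\nu)$, and I would set $\mu^{\xi,\zeta}(B) := g_B(\xi,\zeta)$, so that by construction
\[
\mu(A\times B) = \int_A g_B(\xi,\zeta)\,(\nu\otimes\nu)(d\xi,d\zeta), \qquad A \subset I^2 \text{ Borel}.
\]

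The main obstacle is to upgrade the densities $g_B$ --- each defined only up to a $(\nu\otimes\nu)$-null set, and there being uncountably many $B$ --- into a genuine Borel family of probability measures, valid on a common full‑measure set of parameters. Here I would exploit that the Borel $\sigma$-algebra of the Polish space $E$ is generated by a countable algebra $\mathcal{A}$. On $\mathcal{A}$ one checks the defining relations --- finite additivity, monotonicity, $\mu^{\xi,\zeta}(E)=1$, and $\mu^{\xi,\zeta}(\emptyset)=0$ --- by comparing Radon--Nikodym densities; each such relation holds off a null set, and since $\mathcal{A}$ is countable the union of these exceptional sets is still $(\nu\otimes\nu)$-negligible. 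Off this single null set, $B \mapsto \mu^{\xi,\zeta}(B)$ is a countably additive premeasure on $\mathcal{A}$ (continuity at $\emptyset$ following from regularity on the Polish space), which Carathéodory's extension prolongs uniquely to a probability measure on all Borel sets; measurability of $(\xi,\zeta)\mapsto\mu^{\xi,\zeta}(B)$ for general $B$ then follows from a monotone class argument starting from $\mathcal{A}$. With the family in hand, I would establish the disintegration formula \eqref{eq:disintegration} first for $\varphi = \mathbf{1}_{A\times B}$ (which is exactly the Radon--Nikodym identity), then for simple functions by linearity, and finally for all bounded Borel $\varphi$ by the functional monotone class theorem.

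Uniqueness and the converse are then routine. If two Borel families represent $\mu$, then testing \eqref{eq:disintegration} against $\mathbf{1}_{A\times B}$ forces $\mu_1^{\xi,\zeta}(B) = \mu_2^{\xi,\zeta}(B)$ for $(\nu\otimes\nu)$-a.e. $(\xi,\zeta)$, for each fixed $B$; ranging $B$ over the countable generating algebra $\mathcal{A}$ and discarding the countable union of null sets gives $\mu_1^{\xi,\zeta}=\mu_2^{\xi,\zeta}$ almost everywhere. For the converse, given a Borel family I would define $\mu$ directly by the right‑hand side of \eqref{eq:disintegration}; the Borel-measurability of the inner integral $(\xi,\zeta)\mapsto \int_E \varphi\,d\mu^{\xi,\zeta}$ (first for $\varphi=\mathbf{1}_B$ by the Borel-family hypothesis, then in general by approximation) makes the outer integral well defined, positivity together with the choice $\varphi\equiv 1$ shows $\mu$ is a probability measure, and testing against functions $\varphi=\varphi(\xi,\zeta)$ identifies its $I^2$-marginal as $\nu\otimes\nu$, so that $\mu \in \mathcal{P}_\nu(I^2\times E)$.
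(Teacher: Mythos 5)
Your proposal is correct, but note that the paper itself gives no proof of this theorem: it is stated as ``the classical disintegration theorem,'' i.e.\ as a citation-level fact (the standard disintegration theorem for probability measures on Polish spaces, in the form used e.g.\ in \cite{AyiPoyatoPouradierDuteil}, on which the paper also leans for the time-dependent variant, Theorem \ref{theo:disintegration-time-dependent}), the only adaptation being that the base is $I^2$ equipped with the product measure $\nu\otimes\nu$ and the fiber is $\mathbb{R}^d\times\mathbb{R}^d\times\mathbb{R}$. What you do differently is reconstruct that classical proof in full: Radon--Nikodym densities $g_B$ for each Borel $B$, upgrading them to a genuine Borel family via a countable generating algebra and Carath\'eodory extension off a single null set, monotone class arguments to pass from indicators to all bounded Borel test functions in \eqref{eq:disintegration}, and the routine uniqueness and converse. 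All of these steps are sound, and the trade-off is clear: the paper's choice buys brevity for an auxiliary tool, while yours buys self-containedness at the cost of a page of classical measure theory. The one step you compress too much is the countable additivity of $B\mapsto\mu^{\xi,\zeta}(B)$ on the countable algebra: ``continuity at $\emptyset$ following from regularity'' hides the actual mechanism, namely tightness of the fiber marginal $B\mapsto\mu(I^2\times B)$ on the Polish fiber space, which lets you adjoin to the countable family compact sets $K_{B,n}\subset B$ whose densities increase a.e.\ to that of $B$, so that $\mu^{\xi,\zeta}(B)=\sup_n\mu^{\xi,\zeta}(K_{B,n})$ off a single null set; the compact-class criterion then yields countable additivity. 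Spelling that out is the only thing separating your sketch from a complete proof.
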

As a consequence, we will often identify measures $\mu \in \mathcal{P}_\nu( I^2 \times \mathbb{R}^d \times \mathbb{R}^d \times \mathbb{R})$ with their associated Borel families of almost everywhere defined measures $(\mu^{\xi,\zeta})_{\xi,\zeta \in I} \subset \mathcal{P}(\mathbb{R}^d \times \mathbb{R}^d \times \mathbb{R})$.

Similarly to what we have done in \cite{AyiPoyatoPouradierDuteil}, we choose to consider the following topology. 
\begin{definition}
Consider any $\nu \in \mathcal{P}(I)$, we define
$$\mathcal{P}_{1,\nu}( I^2 \times \mathbb{R}^d \times \mathbb{R}^d \times \mathbb{R}):=\left\{\mu\in \mathcal{P}_\nu( I^2 \times \mathbb{R}^d \times \mathbb{R}^d \times \mathbb{R} ):\, d_{1,\nu}(\mu,\delta_0)\,<\infty\right\}$$
with the $L^1$-bounded Lipschitz distance defined as 
$$d_{1,\nu}(\mu, \bar \mu) := \int_{I^2} d_{\rm BL}(\mu^{\xi,\zeta},\bar \mu^{\xi,\zeta})  \nu(d\xi)  \nu(d\zeta)$$
for any $\mu$, $\bar \mu \in \mathcal{P}_{1,\nu}(I^2 \times \mathbb{R}^d \times \mathbb{R}^d \times \mathbb{R} )$. 
\end{definition}
In all the rest of the article, we will take $\nu$ as the Lebesgue measure and for $\mu, \bar \mu \in \mathcal{P}_{1,\nu}( I^2 \times \mathbb{R}^d \times \mathbb{R}^d \times \mathbb{R})$, we will define the $L^1$-bounded Lipschitz distance as
$$d_{1}(\mu, \bar \mu) := \int_{I^2} d_{\rm BL}(\mu^{\xi,\zeta},\bar \mu^{\xi,\zeta}) d \xi d \zeta.$$
\begin{rem} 
It is natural and coherent to restrict ourselves to the Lebesgue measure on $I$ since the classical graph theory framework, which we aim to adopt, primarily focuses on graphs with weighted edges but unweighted vertices. In contrast, other works have investigated more general settings where heterogeneous weights are assigned to the vertices, which naturally leads to the choice of a non-uniform measure $\nu$ (see \cite{KuehnXu22} for instance). 
\end{rem}
We also introduce the space $\mathcal{C}([0,T],  \mathcal{P}_{1,\nu}(I^2 \times \mathbb{R}^d \times \mathbb{R}^d \times \mathbb{R}))$ endowed with the following norms, for $\mu, \bar \mu \in \mathcal{C}([0,T],  \mathcal{P}_{1,\nu}(I^2 \times \mathbb{R}^d \times \mathbb{R}^d \times \mathbb{R}))$, 
$$d_1^\infty(\mu, \bar \mu) := \sup_{t \in [0,T]} d_{1}(\mu_t, \bar \mu_t) $$
and the weighted version 
$$d_1^\alpha(\mu, \bar \mu) := \sup_{t \in [0,T]} e^{-\alpha t} d_{1}(\mu_t, \bar \mu_t) $$
for $\alpha >0$ to be chosen later. We start by noticing that, since $t \in [0,T]$, those two norms are equivalent. 

When working with a full curve $t \in [0,T] \mapsto \mu_t \in \mathcal{P}_{1,\nu}( I^2 \times \mathbb{R}^d \times \mathbb{R}^d \times \mathbb{R})$ and not an isolated measure $\mu \in \mathcal{P}_{1,\nu}( I^2 \times \mathbb{R}^d \times \mathbb{R}^d \times \mathbb{R})$, a time-dependent disintegration can also be obtained.
\begin{theorem}[Time-dependent disintegrations]\label{theo:disintegration-time-dependent}
Consider any $\nu\in \mathcal{P}(I)$ and any Borel family of probability measures $(\mu_t)_{t\in [0,T]}\subset \mathcal{P}_\nu( I^2 \times \mathbb{R}^d \times \mathbb{R}^d \times \mathbb{R} )$. Then, there exists a Borel family $(\mu_t^{\xi,\zeta})_{(t,\xi,\zeta)\in [0,T]\times I^2}\subset \mathcal{P}(\mathbb{R}^d \times \mathbb{R}^d \times \mathbb{R})$ defined for $dt\otimes\nu \otimes\nu$-a.e. $(t,\xi,\zeta)\in[0,T]\times I^2$ such that
\begin{multline}\label{eq:disintegration-time-dependent}
\int_0^T\left(\int_{\mathbb{R}^d\times I^2}\psi(t,\xi,\zeta,x,y,w)\,\mu_t(d\xi,d\zeta,dx,dy,dw)\right)\,dt \\ =\int_{[0,T]\times  I^2}\left(\int_{\mathbb{R}^d}\psi(t,\xi,\zeta,x,y,w)\,)\,\mu_t^{\xi,\zeta}(dx,dy,dw)\right)\,dt\,\nu(d\xi)\,\nu(d\zeta),
\end{multline}
for every bounded Borel-measurable map $\psi:[0,T]\times I^2 \times  \mathbb{R}^d \times  \mathbb{R}^d \times  \mathbb{R} \longrightarrow \mathbb{R}$. In particular,
\begin{enumerate} 
\item For a.e. $t\in [0,T]$, the slice $(\mu_t^{\xi,\zeta})_{\xi, \zeta\in I}$ is a Borel family defined for $\nu$-a.e. $\xi, \zeta \in I$, and it corresponds to a possible disintegration of $\mu_t$ in the sense of Theorem \ref{theo:disintegration}.
\item For $\nu$-a.e. $\xi, \zeta \in I$, the slice $(\mu_t^{\xi,\zeta})_{t\in [0,T]}$ is a Borel family defined for a.e. $t\in [0,T]$.
\end{enumerate}
\end{theorem}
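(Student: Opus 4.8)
The plan is to deduce the time-dependent statement from the static disintegration theorem (Theorem \ref{theo:disintegration}) by bundling the whole curve into a single measure over the enlarged base $[0,T]\times I^2$. Concretely, I would define a finite measure $\boldsymbol{\mu}$ on $[0,T]\times I^2\times\mathbb{R}^d\times\mathbb{R}^d\times\mathbb{R}$ by integrating the curve against time,
$$\boldsymbol{\mu}(A):=\int_0^T\mu_t(A_t)\,dt,\qquad A_t:=\{(\xi,\zeta,x,y,w):(t,\xi,\zeta,x,y,w)\in A\},$$
(dividing by $T$ if one prefers a probability measure). The first point to check is that this is well defined: the Borel-family hypothesis on $(\mu_t)_{t\in[0,T]}$ gives measurability of $t\mapsto\mu_t(B)$ for every fixed Borel $B$, and a monotone-class argument upgrades this to measurability of $t\mapsto\mu_t(A_t)$ for Borel $A$, so $\boldsymbol{\mu}$ is a genuine Borel measure of total mass $T$.

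Next I would compute the marginal of $\boldsymbol{\mu}$ on the base. Since each $\mu_t$ lies in $\mathcal{P}_\nu$, its $(\xi,\zeta)$-marginal is $\nu\otimes\nu$, and therefore the marginal of $\boldsymbol{\mu}$ under the projection $(t,\xi,\zeta,x,y,w)\mapsto(t,\xi,\zeta)$ is exactly $dt\otimes\nu\otimes\nu$. Applying the classical disintegration theorem (of which Theorem \ref{theo:disintegration} is the $I^2$-base instance) to $\boldsymbol{\mu}$ along this projection yields a Borel family $(\mu_t^{\xi,\zeta})_{(t,\xi,\zeta)}$, defined for $dt\otimes\nu\otimes\nu$-a.e.\ $(t,\xi,\zeta)$, such that the disintegration identity for $\boldsymbol{\mu}$ holds; unwinding the definition of $\boldsymbol{\mu}$, this identity is precisely \eqref{eq:disintegration-time-dependent}.

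It then remains to extract the two ``in particular'' slice properties. Property 2 is the easier one: the joint Borel measurability of $(t,\xi,\zeta)\mapsto\mu_t^{\xi,\zeta}(B)$ together with Fubini gives, for each Borel $B$ and for $\nu\otimes\nu$-a.e.\ $(\xi,\zeta)$, that $t\mapsto\mu_t^{\xi,\zeta}(B)$ is Borel and defined for a.e.\ $t$; running $B$ over a countable generating family and intersecting the null sets yields a single full-measure set of $(\xi,\zeta)$ that works simultaneously. For Property 1 I would test \eqref{eq:disintegration-time-dependent} against tensor functions $\psi(t,\xi,\zeta,\cdot)=\mathbf{1}_{[a,b]}(t)\,\varphi(\xi,\zeta,\cdot)$: since the resulting identity holds for every subinterval $[a,b]\subset[0,T]$, the Lebesgue differentiation theorem forces the two time-integrands to agree for a.e.\ $t$, i.e.\ $\int\varphi\,d\mu_t=\int_{I^2}\int\varphi\,d\mu_t^{\xi,\zeta}\,\nu(d\xi)\nu(d\zeta)$. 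Letting $\varphi$ range over a countable family that separates measures (for instance a countable subset of $\mathrm{BL}$) and intersecting the exceptional $t$-sets, I obtain a full-measure set of times on which this holds for all such $\varphi$, hence for all bounded Borel $\varphi$. By the a.e.\ uniqueness clause of Theorem \ref{theo:disintegration}, the slice $(\mu_t^{\xi,\zeta})_{\xi,\zeta}$ must then coincide with a disintegration of $\mu_t$ for a.e.\ $t$.

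I expect the genuine difficulty to lie in Property 1, namely reconciling the single joint disintegration along $[0,T]\times I^2$ with the fiberwise disintegrations of the individual snapshots $\mu_t$ along $I^2$. Both families are only defined up to null sets, so the argument must be arranged so that one exceptional $t$-set serves all test functions at once; this is exactly what the countable separating family and the intersection of null sets accomplish, while the a.e.\ uniqueness of disintegration is what finally pins the two objects together. By comparison, the construction of $\boldsymbol{\mu}$ and the marginal computation are routine.
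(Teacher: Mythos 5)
Your proof is correct, and it follows essentially the approach the paper itself relies on: the paper's ``proof'' consists only of the sentence that a straightforward adaptation of \cite{AyiPoyatoPouradierDuteil} gives the result, and the argument being invoked there is exactly your bundling construction — form the measure $\int_0^T \delta_t\otimes\mu_t\,dt$ on $[0,T]\times I^2\times\mathbb{R}^d\times\mathbb{R}^d\times\mathbb{R}$, check that its base marginal is $dt\otimes\nu\otimes\nu$, disintegrate over the enlarged base $[0,T]\times I^2$, and recover the two slice properties by testing against product functions, a Lebesgue-point argument in $t$, and a countable measure-determining family. Your proposal is thus a complete, self-contained write-up of the argument the paper defers to its reference.
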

\begin{proof}
A straightforward adaptation of the proof in \cite{AyiPoyatoPouradierDuteil} gives the result.
\end{proof}
We finish by stating some completeness properties.
\begin{prop} For any  $\nu \in \mathcal{P}(I)$, the spaces $$(\mathcal{P}_{1,\nu}( I^2 \times \mathbb{R}^d \times \mathbb{R}^d \times \mathbb{R}), d_1),$$  $$(\mathcal{C}([0,T],  \mathcal{P}_{1,\nu}(I^2 \times \mathbb{R}^d \times \mathbb{R}^d \times \mathbb{R})), d_1^\infty),$$  and $$(\mathcal{C}([0,T],  \mathcal{P}_{1,\nu}(I^2 \times \mathbb{R}^d \times \mathbb{R}^d \times \mathbb{R})), d_1^\alpha))$$ are complete.
\end{prop}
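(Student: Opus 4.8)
The plan is to recognize all three spaces as $L^1$-type (or uniform) function spaces valued in a \emph{fixed} complete metric space, and to reduce their completeness to fiberwise completeness together with a standard Bochner-type argument. First I would record the fiberwise fact: since $\mathbb{R}^d\times\mathbb{R}^d\times\mathbb{R}$ is Polish, the bounded-Lipschitz (Dudley) distance $d_{\rm BL}$ metrizes weak convergence on $\mathcal{P}(\mathbb{R}^d\times\mathbb{R}^d\times\mathbb{R})$ and makes it a \emph{complete} separable metric space, by a classical result of Dudley. Note also that $\|\varphi\|_\infty\le 1$ for every admissible test function forces $d_{\rm BL}\le 2$, so the restriction defining $\mathcal{P}_{1,\nu}$ is automatic ($\mathcal{P}_{1,\nu}=\mathcal{P}_\nu$) and $d_1$ is genuinely an $L^1(I^2,\nu\otimes\nu)$-integral of the bounded quantity $d_{\rm BL}(\mu^{\xi,\zeta},\bar\mu^{\xi,\zeta})$. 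Thus $(\mathcal{P}_{1,\nu},d_1)$ plays the role of $L^1\big(I^2;(\mathcal{P}(\mathbb{R}^d\times\mathbb{R}^d\times\mathbb{R}),d_{\rm BL})\big)$, and I would prove its completeness by the classical summable-tails method.

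Concretely, given a $d_1$-Cauchy sequence $(\mu_n)_n$, I would extract a subsequence $(\mu_{n_k})_k$ with $d_1(\mu_{n_k},\mu_{n_{k+1}})\le 2^{-k}$. By the triangle inequality for $d_{\rm BL}$ and Tonelli, the map $(\xi,\zeta)\mapsto \sum_k d_{\rm BL}(\mu_{n_k}^{\xi,\zeta},\mu_{n_{k+1}}^{\xi,\zeta})$ has finite integral, hence is finite for $\nu\otimes\nu$-a.e.\ $(\xi,\zeta)$. For each such $(\xi,\zeta)$ the fiber sequence $(\mu_{n_k}^{\xi,\zeta})_k$ is $d_{\rm BL}$-Cauchy, so it converges to some $\mu_\infty^{\xi,\zeta}\in\mathcal{P}(\mathbb{R}^d\times\mathbb{R}^d\times\mathbb{R})$ by fiberwise completeness; I set $\mu_\infty^{\xi,\zeta}:=\delta_0$ on the exceptional null set. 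Measurability of $(\xi,\zeta)\mapsto\mu_\infty^{\xi,\zeta}$ follows because for every bounded continuous $\varphi$ the map $(\xi,\zeta)\mapsto\int\varphi\,d\mu_\infty^{\xi,\zeta}$ is the a.e.\ limit of the Borel maps $(\xi,\zeta)\mapsto\int\varphi\,d\mu_{n_k}^{\xi,\zeta}$, and the Borel $\sigma$-algebra of $\mathcal{P}(\mathbb{R}^d\times\mathbb{R}^d\times\mathbb{R})$ for weak convergence is generated by such evaluations against a countable determining family; so $(\mu_\infty^{\xi,\zeta})$ is a genuine Borel family, and since each fiber is a probability measure, the associated fibered measure (via Theorem \ref{theo:disintegration}) has marginal $\nu\otimes\nu$, i.e.\ $\mu_\infty\in\mathcal{P}_{1,\nu}$. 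Finally the bound $d_{\rm BL}(\mu_{n_k}^{\xi,\zeta},\mu_\infty^{\xi,\zeta})\le \sum_{l\ge k}d_{\rm BL}(\mu_{n_l}^{\xi,\zeta},\mu_{n_{l+1}}^{\xi,\zeta})$ exhibits an integrable dominating tail, so dominated convergence gives $d_1(\mu_{n_k},\mu_\infty)\to 0$; a Cauchy sequence with a convergent subsequence converges, whence $\mu_n\to\mu_\infty$ and $(\mathcal{P}_{1,\nu},d_1)$ is complete.

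For the two curve spaces I would invoke the general principle that $\mathcal{C}([0,T],Y)$ under the uniform metric is complete whenever $(Y,d)$ is complete: a $d_1^\infty$-Cauchy sequence $(\mu^n_\cdot)$ is, for each fixed $t$, $d_1$-Cauchy, so $\mu_t:=\lim_n\mu^n_t$ exists in $(\mathcal{P}_{1,\nu},d_1)$ by the previous step; uniform Cauchyness upgrades this to uniform convergence, and the uniform limit of continuous curves is continuous, giving $\mu_\cdot\in\mathcal{C}([0,T],\mathcal{P}_{1,\nu})$ with $d_1^\infty(\mu^n,\mu)\to 0$. For $d_1^\alpha$, since $t$ ranges over the compact interval $[0,T]$ one has $e^{-\alpha T}\le e^{-\alpha t}\le 1$, so $d_1^\alpha$ and $d_1^\infty$ are bi-Lipschitz equivalent (as already noted in the text) and completeness transfers verbatim.

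The step I expect to be the genuine obstacle is the measurability of the limiting family $(\mu_\infty^{\xi,\zeta})_{\xi,\zeta}$ together with the verification that the reassembled object lies in $\mathcal{P}_{1,\nu}$ (correct marginal) rather than merely in $\mathcal{P}(I^2\times\mathbb{R}^d\times\mathbb{R}^d\times\mathbb{R})$: everything else is either a citable fiberwise fact (Dudley completeness) or the routine $L^1$/uniform completeness machinery. This is precisely where the fibered structure and the identification of Theorem \ref{theo:disintegration} must be used carefully, and, as the statement suggests, it can be carried out by a direct adaptation of the corresponding argument in \cite{AyiPoyatoPouradierDuteil}.
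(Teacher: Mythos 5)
Your proposal is correct and takes essentially the same route as the paper: the paper's own argument simply defers the completeness of $(\mathcal{P}_{1,\nu},d_1)$ to \cite[Proposition A.3]{Peszek} (proved there for the $L^2$-in-$(\xi,\zeta)$ metric), and your Riesz--Fischer-type argument --- summable-tails subsequence, fiberwise Dudley completeness of $(\mathcal{P}(\R^d\times\R^d\times\R),d_{\rm BL})$, measurability of the limiting Borel family, reassembly via the disintegration theorem, and tail-domination to conclude $d_1$-convergence --- is precisely the ``straightforward adaptation'' to $L^1$ that the paper invokes, with the useful extra observation that boundedness of $d_{\rm BL}$ makes the finiteness constraint in $\mathcal{P}_{1,\nu}$ automatic. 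Your treatment of the two curve spaces (uniform completeness over a complete target, then bi-Lipschitz equivalence of $d_1^\infty$ and $d_1^\alpha$ on the compact interval $[0,T]$) coincides with the paper's remark verbatim.
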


\begin{rem} 
To establish the completeness of 
$(\mathcal{P}_{1,\nu}(I^2 \times \mathbb{R}^d \times \mathbb{R}^d \times \mathbb{R}), d_1)$, 
we refer to \cite[Proposition A.3]{Peszek}, where the result is proved for 
$(\mathcal{P}_{2,\nu}(I^2 \times \mathbb{R}^d \times \mathbb{R}^d \times \mathbb{R}), d_2)$, 
i.e. in the case of the $L^2$ norm in $(\xi,\zeta)$ instead of $L^1$. 
A straightforward adaptation of the argument yields the desired result. 
It is then standard and easy to verify that this completeness property extends to 
$(\mathcal{C}([0,T], \mathcal{P}_{1,\nu}(I^2 \times \mathbb{R}^d \times \mathbb{R}^d \times \mathbb{R})), d_1^\infty)$. 
Finally, the completeness of 
$(\mathcal{C}([0,T], \mathcal{P}_{1,\nu}(I^2 \times \mathbb{R}^d \times \mathbb{R}^d \times \mathbb{R})), d_1^\alpha)$
follows immediately from the equivalence between the metrics $d_1^\infty$ and $d_1^\alpha$.
\end{rem}

\subsection{Statement of the results}

We now state the main results of this paper, which establish the derivation of a Vlasov-type equation in an extended phase space as the large-population limit of the interacting particle system under consideration.

An important aspect of our analysis is that this derivation can be achieved through two distinct and complementary approaches, reflecting different modeling and analytical frameworks. The first approach follows the classical mean-field methodology introduced by Sznitman. In this setting, we impose the additional assumption that the weight dynamics $\Lambda_{ij}$ is independent of the spatial variable $X_i$. As will be discussed throughout the paper, this restriction remains well justified from a modeling viewpoint and allows for a direct probabilistic treatment of the system.

The second approach relies on the graph limit perspective, in a deterministic setting. This approach makes it possible to remove the restrictive assumption on $\Lambda_{ij}$ and to handle more general interaction structures.

The first theorem below establishes the mean-field limit under the Sznitman-type framework, while the second theorem derives the corresponding limit equation using the graph-limit approach. Together, these results provide a comprehensive description of the asymptotic behavior of the system and clarify the connections between the two limiting procedures.

\begin{theorem}
 Let the interaction function $\phi$  satisfy Hypothesis \ref{hyp:phi}. Let the weight dynamics  $\Lambda$ satisfy Hypotheses \ref{hyp:psi} and \ref{hyp:struct_lambda}, and assume that it does not depend on the variable $x$.   Let the particle systems \eqref{eq:particle_syst_restrictive} be initialized with random variables
$(X^{N,0},W^{N,0})$ such that the family
$\{X_i^{N,0},\, W_{ij}^{N,0} : 1\le i,j\le N\}$ is independent,
$\E|X_i^{N,0}|<+\infty$, and $W^{N,0}$ satisfies
Hypothesis~\ref{hyp:graph_initial}. Consider the unique solution associated $(X^{N}, W^{N})$. Then,  the mean-field limit of the multi-agent system \eqref{eq:particle_syst_restrictive} is characterized in a suitable sense by a solution to the Vlasov-type equation \eqref{eq:vlasov-equation}-\eqref{eq:vlasov-force} for some  $(\mu_t^{\xi,\zeta})_{(t,\xi,\zeta)\in [0,T]\times I^2}\subset \mathcal{P}(\mathbb{R}^d \times \mathbb{R}^d \times \mathbb{R})$.
\end{theorem}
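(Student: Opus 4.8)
The plan is to carry out Sznitman's coupling argument on the fibered phase space $I^2\times\R^d\times\R^d\times\R$. First I would establish, in Section \ref{sec:well-posedness}, well-posedness of \eqref{eq:vlasov-equation}--\eqref{eq:vlasov-force} in $\mathcal{C}([0,T],\mathcal{P}_{1,\nu})$ by a contraction argument in the metric $d_1^\alpha$ for $\alpha$ large, using $\|\phi\|_\infty=M_\phi$, the sublinear bound \eqref{eq:psisublin} (which prevents finite-time blow-up and yields an a priori exponential-in-$T$ bound on $\E|W|$), and the Lipschitz estimates of Hypotheses \ref{hyp:phi}--\ref{hyp:psi}. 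Simultaneously this provides a characteristics/McKean--Vlasov reading, in which $\mu_t^{\xi,\zeta}$ is the joint law of the nonlinear flow $(\bar X_\xi(t),\bar X_\zeta(t),\bar W_{\xi\zeta}(t))$ driven by the three components of $F_\Lambda$, with $(\bar X_\xi(0),\bar X_\zeta(0),\bar W_{\xi\zeta}(0))\sim\mu_0^{\xi,\zeta}$. The assumption that $\Lambda$ is independent of $x=X_i$ is decisive here: it makes $\bar X_\xi(t)$ a deterministic function of $\bar X_\xi(0)$ alone and $\bar W_{\xi\zeta}(t)$ a deterministic function of $(\bar W_{\xi\zeta}(0),\bar X_\zeta(0))$ alone, so that under independent initial data $\bar W_{\xi\zeta}\perp\bar X_\xi$. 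This last independence is exactly what makes the \emph{unconditional} marginal appearing in the first component of \eqref{eq:vlasov-force} the correct limiting drift; if $\Lambda$ depended on $x$, the weight and the receiving state would be correlated and the closure would instead require conditional laws not encoded by \eqref{eq:vlasov-force}.

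Next I would introduce, alongside \eqref{eq:particle_syst_restrictive}, the decoupled copies $(\bar X_i^N,\bar W_{ij}^N)$ obtained by integrating the nonlinear dynamics at labels $\xi_i=i/N$ from the same initial data $(X_i^{N,0},W_{ij}^{N,0})$. By the previous step, for each fixed $i$ the pairs $(\bar X_j^N,\bar W_{ij}^N)_{j}$ are independent across $j$ and jointly independent of $\bar X_i^N$, while for distinct edges the triples $(\bar X_i^N,\bar X_j^N,\bar W_{ij}^N)$ share randomness only through a common endpoint. This is the propagation-of-independence content of Section \ref{sec:independance}; Hypothesis \ref{hyp:struct_lambda} enters precisely here, since it kills the diagonal contributions of the double average $\tfrac1{N^2}\sum_{i_1,j_1}$, leaving summands whose initial data are disjoint from the tagged indices $(i,j)$ and reducing the conditional variance of these averages from $O(1)$ to $O(1/N)$.

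The mean-field limit then reduces to a Grönwall control of
\[
\rho_N(t):=\max_i\E\,|X_i^N(t)-\bar X_i^N(t)|+\max_{i,j}\E\,|W_{ij}^N(t)-\bar W_{ij}^N(t)|.
\]
Differentiating both differences and invoking Hypotheses \ref{hyp:phi}--\ref{hyp:psi}, each right-hand side splits into a genuinely Lipschitz part bounded by $C\rho_N$ (the factor $|\bar W_{ij}^N|$ multiplying $L_\phi$ being controlled by the a priori bound from \eqref{eq:psisublin}) and a mean-field fluctuation, e.g.
\[
\tfrac1N\sum_j \bar W_{ij}^N\phi(t,\bar X_i^N,\bar X_j^N)-\int_{I\times\R^d\times\R^d\times\R}\tw\,\phi(t,\bar X_i^N,\ty)\,\mu_t^{\xi_i,\tz}(d\tx,d\ty,d\tw)\,d\tz
\]
for the state equation and the analogous double average for the weights. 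After taking expectations, these fluctuations are estimated by the quantitative law of large numbers of the previous step, uniformly in $t\in[0,T]$, with the stability estimates of Section \ref{sec:stability} used to pass between $\mu_t^{\xi_i,\cdot}$ and its empirical proxy. Grönwall's lemma yields $\sup_{t\le T}\rho_N(t)\to0$, and combining this with the concentration of the empirical measure of the auxiliary system gives convergence of the fibered empirical measure of \eqref{eq:particle_syst_restrictive} to $\mu_t$ in $d_1$ (Section \ref{sec:mfl}), which is the sense in which the mean-field limit holds.

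I expect the main difficulty to be the propagation of independence itself, and specifically the preservation of the relation $W_{ij}\perp X_i$ along the fully coupled particle dynamics: the weight equation involves a double average over all pairs, a single state $X_i$ feeds $O(N)$ distinct weights, and the fluctuation terms must be controlled for the $N^2$ weights simultaneously and uniformly in time. The independence of $\Lambda$ from $X_i$ is what confines each weight's randomness to one endpoint, keeps the correlation graph sparse, and makes the first drift in \eqref{eq:vlasov-force} the correct unconditional limit; without it the feedback between $X_i$ and its incident weights would obstruct the closure, which is precisely why the general case is instead treated by the deterministic graph-limit approach of the companion theorem.
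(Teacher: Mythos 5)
Your proposal follows the same overall Sznitman-type architecture as the paper: well-posedness of \eqref{eq:vlasov-equation}--\eqref{eq:vlasov-force} by a contraction in the weighted metric $d_1^\alpha$, an auxiliary decoupled system whose independence structure is preserved precisely because $\Lambda$ does not depend on $x$ (so each weight's randomness is confined to the receiving endpoint), a Gr\"onwall estimate on $\rho_N$ whose fluctuation terms are killed by conditional independence (this is exactly where Hypothesis \ref{hyp:struct_lambda} and the exclusions $i_1,j_1\neq i$ enter in the paper, yielding the $O(N^{-1/2})$ rate of Lemma \ref{lem:error-estimate}), and a final passage to the limit via the stability estimates of Theorem \ref{theo:stability}. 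Your diagnosis of why the restriction on $\Lambda$ is decisive for the closure, and of where the structural hypothesis is used, matches the paper.

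There is, however, one genuine gap, located exactly where the paper deviates from textbook Sznitman. You define the decoupled copies by ``integrating the nonlinear dynamics at labels $\xi_i=i/N$,'' i.e.\ copies driven by $F_\Lambda[\mu_t](\xi_i,\xi_j,\cdot)$ where $\mu$ is the limiting solution. But under the theorem's hypotheses, $(\xi,\zeta)\mapsto\mu_t^{\xi,\zeta}$ is only an almost-everywhere defined Borel family (the initial datum is a fibered measure with $d_1(\mu_0^N,\mu_0)\to 0$ and no continuity in the labels is assumed), so evaluation at the specific grid points $(\xi_i,\xi_j)$ is not canonically defined; and even after fixing a representative, your bias terms --- sums $\tfrac1N\sum_j$ over grid labels compared with $\int_I \cdot \,d\tilde\zeta$ --- are Riemann-sum errors for a merely measurable function of the label, which need not converge to the Lebesgue integral and cannot be controlled by the assumed $d_1$-convergence alone. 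The paper's intermediate system \eqref{eq:intermediate1} is built to avoid this: it is autonomous, defined through the conditional expectations $\E_i$, $\E_{i,j}$ of its \emph{own} states and laws, never referencing $\mu$; its graphon reformulation $\bar\mu^N$ is then an \emph{exact} distributional solution of the Vlasov equation with the piecewise-constant weight dynamics $\Lambda_N$ (Lemma \ref{lem:graphon_solution_Vlasov}); and the comparison with $\mu_t$ is carried out entirely in the integrated metric $d_1$ via Theorem \ref{theo:stability}, where the discretization error appears as the term $\int|\Lambda_N-\Lambda|$ and is sent to zero by Lebesgue differentiation and dominated convergence. If you replace your limit-law-driven copies by this self-consistent intermediate system (or, at minimum, drive your copies by cell averages of $\mu$ and rewrite the bias estimates as an integrated stability bound), the rest of your argument goes through as planned; note also that the conclusion one then obtains, as in Theorem \ref{theo:main}, concerns $\E\mu_t^N$ rather than an almost-sure concentration of the empirical measure.
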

For a precise and fully rigorous statement, we refer to Theorem \ref{theo:main}.

\begin{theorem}
 Let the interaction function $\phi$  satisfy Hypothesis \ref{hyp:phi}. Let the weight dynamics $\Lambda$ satisfy Hypotheses \ref{hyp:psi} and \ref{hyp:struct_lambda}. Let $x_0 \in   L^\infty(I;\R^d)$, $ w_0 \in   L^\infty(I^2)$ and $(x,w)$ be the solution to the graph limit equation \eqref{eq:GL} with weight dynamics $\Lambda$ and initial conditions given by $x(0, \cdot) = x_0$ and $w(0, \cdot, \cdot) = w_0$. 
 Then,  the ``empirical measure'' $\mu^N$ associated  to the multi-agent system \eqref{eq:particle_syst_restrictive} converges weakly to  the ``continuous'' empirical measure $\tilde{\mu}$ associated to the continuum (or graph) limit of the multi-agent system \eqref{eq:particle_syst_restrictive} and  $\tilde{\mu}$ is a solution to the Vlasov-type  equation \eqref{eq:vlasov-equation}-\eqref{eq:vlasov-force}.
\end{theorem}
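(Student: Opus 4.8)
The plan is to realise both the particle system and its continuum limit as \emph{monokinetic} (Dirac-fibered) measures over the extended phase space, to transfer a deterministic $L^1$-convergence of the underlying profiles into the asserted weak convergence of empirical measures, and finally to verify directly that the continuum object is a weak solution of \eqref{eq:vlasov-equation}. First, I would invoke the well-posedness result of Section~\ref{sec:well-posedness}: under Hypotheses~\ref{hyp:phi} and~\ref{hyp:psi}, the graph-limit equation \eqref{eq:GL} admits a unique solution $(x,w)\in\mathcal{C}([0,T];L^\infty(I;\R^d)\times L^\infty(I^2))$ for the data $x_0,w_0$. I would then define the step-function reconstruction of the particle system \eqref{eq:particle_syst} by $x^N(t,\xi):=X_i^N(t)$ and $w^N(t,\xi,\zeta):=W_{ij}^N(t)$ for $\xi\in I_i^N$, $\zeta\in I_j^N$, and introduce the fibered measures $\tilde\mu_t^{\xi,\zeta}:=\delta_{(x(t,\xi),x(t,\zeta),w(t,\xi,\zeta))}$ and $\mu_t^{N,\xi,\zeta}:=\delta_{(x^N(t,\xi),x^N(t,\zeta),w^N(t,\xi,\zeta))}$, with the associated elements of $\mathcal{P}_{1,\nu}$ supplied by the converse part of Theorem~\ref{theo:disintegration}.

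Second comes the consistency analysis. Because $x^N$, $w^N$ and $\phi$ are piecewise constant in $(\xi,\zeta)$ and $|I_j^N|=1/N$, the reconstruction satisfies the state equation of \eqref{eq:GL} \emph{exactly}: $\partial_t x^N(t,\xi)=\int_I w^N(t,\xi,\zeta)\phi(t,x^N(t,\xi),x^N(t,\zeta))\,d\zeta$. For the weight equation, the cell-averaging built into the definition of $\Lambda_{ij}$ yields $\partial_t w^N=P_N\bigl[\mathcal{R}(x^N,w^N)\bigr]$, where $\mathcal{R}(x,w)(\xi,\zeta):=\int_{I^2}\Lambda(\xi,\zeta,x(\xi),x(\zeta),w(\xi,\zeta),x(\tilde\xi),x(\tilde\zeta),w(\tilde\xi,\tilde\zeta))\,d\tilde\xi\,d\tilde\zeta$ is exactly the graph-limit weight velocity and $P_N$ denotes the averaging (conditional-expectation) projection onto functions piecewise constant on the cells $I_i^N\times I_j^N$, an $L^1$-contraction with $P_N\to\mathrm{Id}$ strongly in $L^1$. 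Writing $\partial_t(w^N-w)=P_N[\mathcal{R}(x^N,w^N)-\mathcal{R}(x,w)]+(P_N-\mathrm{Id})\mathcal{R}(x,w)$, using $\|P_N\|_{L^1\to L^1}\le 1$, the Lipschitz bound of Hypothesis~\ref{hyp:psi}, and the analogous (exactly consistent) difference for the state equation controlled by Hypothesis~\ref{hyp:phi}, I would close a Grönwall estimate — the deterministic counterpart of the stability estimate of Section~\ref{sec:stability} — for $\|x^N-x\|_{L^1(I)}+\|w^N-w\|_{L^1(I^2)}$. This tends to $0$ once the initial distance $\|x^N(0,\cdot)-x_0\|_{L^1}+\|w^N(0,\cdot,\cdot)-w_0\|_{L^1}$ and the consistency error $\|(P_N-\mathrm{Id})\mathcal{R}(x,w)\|_{L^1}$ vanish; both follow from $P_N\to\mathrm{Id}$ in $L^1$ applied to the fixed $L^\infty$ data $x_0,w_0$ and to the fixed, $N$-independent profile $\mathcal{R}(x,w)\in L^1([0,T]\times I^2)$.

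Third, since all fibers are Diracs, the bounded-Lipschitz distance between corresponding fibers is bounded by the Euclidean distance of the atoms, so $d_1(\mu_t^N,\tilde\mu_t)\le \int_{I^2}\bigl(|x^N(t,\xi)-x(t,\xi)|+|x^N(t,\zeta)-x(t,\zeta)|+|w^N(t,\xi,\zeta)-w(t,\xi,\zeta)|\bigr)\,d\xi\,d\zeta$, which converges to $0$ uniformly on $[0,T]$ by the previous step; this gives $\mu^N\to\tilde\mu$ in $d_1^\infty$, and in particular the claimed weak convergence. To verify that $\tilde\mu$ solves \eqref{eq:vlasov-equation}, I would evaluate $F_\Lambda[\tilde\mu_t]$ on the Dirac fibers: integrating $\tilde w\,\phi(t,x,\tilde y)$ against $\tilde\mu_t^{\xi,\tilde\zeta}=\delta_{(x(\xi),x(\tilde\zeta),w(\xi,\tilde\zeta))}$ and $\Lambda$ against $\tilde\mu_t^{\tilde\xi,\tilde\zeta}$ collapses the velocity field \eqref{eq:vlasov-force} to exactly the right-hand side of \eqref{eq:GL}. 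Hence $t\mapsto(x(t,\xi),x(t,\zeta),w(t,\xi,\zeta))$ is the characteristic curve of \eqref{eq:vlasov-equation} issuing from $(x_0(\xi),x_0(\zeta),w_0(\xi,\zeta))$, and pushing the initial fibers along this flow and testing against $\varphi\in\mathcal{C}^1_c$ shows that $\tilde\mu$ satisfies the weak formulation of the continuity equation.

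The main obstacle I anticipate is the consistency estimate for the weight dynamics, namely controlling $(P_N-\mathrm{Id})\mathcal{R}(x,w)$. Unlike the state equation, the weight equation is only consistent \emph{after} cell-averaging in the slow variables $(\xi,\zeta)$, while Hypothesis~\ref{hyp:psi} grants regularity of $\Lambda$ only in the state variables and not in $(\xi,\zeta)$. The argument therefore hinges on decoupling the projection error from the Lipschitz-in-state error: by isolating $(P_N-\mathrm{Id})$ acting on the $N$-independent limiting velocity $\mathcal{R}(x,w)$, the projection defect becomes a genuine strong-$L^1$ convergence statement for a fixed $L^1$ function, and only the remaining Lipschitz term feeds into the Grönwall loop. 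Making this splitting rigorous, and ensuring the $L^\infty$ initial data are approximated in the $L^1$ sense compatible with the metric $d_1$, is the step requiring the most care.
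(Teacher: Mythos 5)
Your proposal is correct, but it takes a genuinely different route from the paper on the central analytic step. Where you prove the convergence of the step-function reconstructions $(x_N,w_N)$ to the graph-limit solution $(x,w)$ from scratch — exact consistency of the state equation, consistency of the weight equation modulo the cell-averaging projection $P_N$, the splitting $\partial_t(w^N-w)=P_N[\mathcal{R}(x^N,w^N)-\mathcal{R}(x,w)]+(P_N-\mathrm{Id})\mathcal{R}(x,w)$, and a Gr\"onwall loop in $L^1$ — the paper simply invokes the graph-limit theorem of \cite{Throm_2024} as a black box, which provides $\|x_N-x\|_{\mathcal{C}([0,T];L^2(I;\R^d))}+\|w_N-w\|_{\mathcal{C}([0,T];L^2(I^2))}\to 0$, and then transfers this to the measures via the Lipschitz bound on test functions together with Cauchy--Schwarz. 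Your consistency/projection argument is sound (the identification $\partial_t w^N=P_N[\mathcal{R}(x^N,w^N)]$ is exactly what the definition of $\Lambda_{ij}$ in \eqref{eq:particle_syst} encodes, and the decoupling of the projection defect on the fixed profile $\mathcal{R}(x,w)$ from the Lipschitz term is the right move given that Hypothesis \ref{hyp:psi} gives no regularity in $(\xi,\zeta)$), but it essentially re-derives the cited result of \cite{Throm_2024}; the paper buys brevity by citation, your version buys self-containedness and an explicit $L^1$ error mechanism. For the second half of the statement, your evaluation of $F_\Lambda[\tilde\mu_t]$ on the Dirac fibers and the characteristics argument is the same content as the paper's Proposition \ref{prop:tilde_vlasov}, which instead performs the direct weak-formulation computation (differentiating $\int\varphi\,d\tilde\mu_t$ in time and substituting \eqref{eq:GL}); the two are equivalent in view of Proposition \ref{pro:distributional-solution-Vlasov-fibered}. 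Two small corrections: the well-posedness of the graph-limit equation \eqref{eq:GL} is not established in Section \ref{sec:well-posedness} (that section treats the Vlasov-type equation); it comes from \cite{Throm_2024} and is presupposed in the statement. Also, your state-equation stability estimate silently uses the uniform bound $\|w\|_{L^\infty}$ (resp.\ $\|w^N\|_{L^\infty}$), which must be secured first via the sublinearity \eqref{eq:psisublin} and Gr\"onwall, as in \eqref{eq:bound_weights1}; this is available but should be stated.
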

Similarly, we refer to Theorem~\ref{prop:convmu} for a complete and rigorous formulation.
\section{Well-posedness of the Vlasov-type equation} \label{sec:well-posedness}
\subsection{Well-posedness of the characteristics}

Our strategy for addressing the well-posedness of the Vlasov-type equation \eqref{eq:vlasov-equation}-\eqref{eq:vlasov-force} relies on the classical method of characteristics. This approach consists in determining the characteristic curves along which the original partial differential equation reduces to an ordinary differential equation. In the present framework, the characteristics associated with the Vlasov-type equation \eqref{eq:vlasov-equation}-\eqref{eq:vlasov-force} take the form
\begin{equation} \label{eq:charac}
\frac{d}{dt} Z [\mu](t,\xi,\zeta,x,y,w) = F_\Lambda[\mu_t](\xi,\zeta,Z [\mu](t,\xi,\zeta,x,y,w)) 
\end{equation} 
 where $$Z[\mu] := \left( \begin{array}{c}
X[\mu] \\ 
Y[\mu] \\ 
W[\mu]
 \end{array} \right) $$ and   $$Z[\mu](0,\xi,\zeta,x,y,w) = \left( \begin{array}{c}
x\\
y\\
w
 \end{array} \right). $$

\begin{prop}[Well-posed characteristics]\label{pro:well-posed-characteristics}
Let the interaction function $\phi$  satisfy Hypothesis \ref{hyp:phi}. Let the weight dynamics $\Lambda$ satisfy Hypotheses \ref{hyp:psi} and \ref{hyp:struct_lambda}. Consider any Borel family $(\mu_t)_{t \in [0,T]} \subset \mathcal{P}_{1,\nu}(I^2 \times \mathbb{R}^d \times \mathbb{R}^d \times \mathbb{R})$ such that 
 for all $t \in [0,T]$, we have  
\begin{equation}  \label{eq:moment_borne}
   \esssup_{\xi, \zeta \in I}  \int_{ \R^d \times \R^d \times \R} |\tw| \mu_t^{\xi,\zeta }(d\tx,d\ty,d\tw)   \leq \tilde C
\end{equation} 
 for some $\tilde C >0$.  
 
 Then, for all $t \in [0,T]$, there is a unique Carathéodory  solution $Z [\mu](t,\xi,\zeta,x,y,w)$ to the characteristic system \eqref{eq:charac}
for all $(x,y,w) \in \mathbb{R}^d \times \mathbb{R}^d \times \mathbb{R}$ and a.e. $\xi, \zeta \in I$.
\end{prop}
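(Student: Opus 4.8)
The plan is to fix the parameters $(\xi,\zeta)\in I^2$ (among the full-measure set on which the moment bound \eqref{eq:moment_borne} and the Borel family are defined) and to regard \eqref{eq:charac} as a Carathéodory ODE in the unknown $z=(x,y,w)\in\R^d\times\R^d\times\R$, with vector field $(t,z)\mapsto F_\Lambda[\mu_t](\xi,\zeta,z)$. Since the curve $t\mapsto\mu_t$ is merely a Borel family, we cannot expect continuity in $t$, which is exactly why the Carathéodory notion is the natural one. The proof then reduces to verifying the three ingredients of the parametrized Carathéodory existence–uniqueness theorem: (i) an a priori growth bound on $F_\Lambda$ guaranteeing global existence on all of $[0,T]$; (ii) a Lipschitz estimate in $z$, uniform in $t$ and $(\xi,\zeta)$, yielding uniqueness; and (iii) the measurability properties in $t$ and in the parameters $(\xi,\zeta)$. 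Note that the structural Hypothesis \ref{hyp:struct_lambda} plays no role at this stage.

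For (i), the moment bound \eqref{eq:moment_borne} first makes $F_\Lambda$ well defined and controls its two first components. Using $\|\phi\|_\infty=M_\phi$ together with \eqref{eq:moment_borne},
$$\left|\intt\tw\,\phi(t,x,\ty)\,\mu_t^{\xi,\tz}(d\tx,d\ty,d\tw)\,d\tz\right|\le M_\phi\int_I\left(\int_{\R^d\times\R^d\times\R}|\tw|\,\mu_t^{\xi,\tz}(d\tx,d\ty,d\tw)\right)d\tz\le M_\phi\tilde C,$$
and the same bound holds for the second component. For the third, the sublinear bound \eqref{eq:psisublin} on $\Lambda$ gives
$$\left|\inttt\Lambda(\xi,\zeta,x,y,w,\tx,\ty,\tw)\,\mu_t^{\xztt}(d\tx,d\ty,d\tw)\,d\txi\,d\tz\right|\le C_\Lambda(1+|w|),$$
since $\mu_t^{\xztt}$ is a probability measure and integration over $I^2$ contributes a factor $1$. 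Thus $\dot X$ and $\dot Y$ are uniformly bounded while $\dot W$ grows at most linearly in $|W|$; a Grönwall argument on the $w$-component yields $|W(t)|\le(1+|w|)e^{C_\Lambda T}$, and then $|X(t)|,|Y(t)|\le |x|,|y|+M_\phi\tilde C\,T$. These a priori bounds preclude finite-time blow-up and promote any local solution to a global one on $[0,T]$.

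For (ii), the same hypotheses deliver the Lipschitz estimate. The Lipschitz continuity of $\phi$ in its spatial arguments (Hypothesis \ref{hyp:phi}) together with \eqref{eq:moment_borne} shows that the first component is Lipschitz in $x$ with constant $L_\phi\tilde C$ (and independent of $y,w$), and symmetrically for the second; the Lipschitz continuity of $\Lambda$ (Hypothesis \ref{hyp:psi}), integrated against the probability measure $\mu_t^{\xztt}$, shows that the third component is Lipschitz in $(x,y,w)$ with constant $L_\Lambda$. Hence $z\mapsto F_\Lambda[\mu_t](\xi,\zeta,z)$ is globally Lipschitz with a constant uniform in $t$ and in $(\xi,\zeta)$, giving uniqueness.

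Finally, for (iii), measurability in $t$ of $t\mapsto F_\Lambda[\mu_t](\xi,\zeta,z)$ follows from the Borel-family structure of $(\mu_t)_{t\in[0,T]}$ and the fact that, for fixed $(\xi,\zeta,z)$, the integrands $\tw\,\phi(t,x,\ty)$ and $\Lambda(\xi,\zeta,x,y,w,\tx,\ty,\tw)$ are continuous and, thanks to \eqref{eq:moment_borne}, integrable against $\mu_t$. With (i)–(iii) in hand, the Carathéodory theorem provides, for a.e. $(\xi,\zeta)$, a unique absolutely continuous solution $Z[\mu](\cdot,\xi,\zeta,x,y,w)$ on $[0,T]$. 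The step I expect to be the most delicate is the joint measurability bookkeeping: one must check that the vector field is jointly measurable in $(t,\xi,\zeta)$ and that the resulting solution family $(\xi,\zeta)\mapsto Z[\mu](t,\xi,\zeta,x,y,w)$ is Borel, which is what makes the statement ``for a.e. $\xi,\zeta$'' meaningful. This is obtained by combining the Borel measurability of $(\xi,\zeta)\mapsto\mu_t^{\xi,\zeta}$ with the measurable dependence of Carathéodory solutions on parameters, for instance by propagating measurability through the Picard iterates used to construct the solution.
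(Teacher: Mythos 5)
Your proposal is correct and follows essentially the same route as the paper's proof: establish the global Lipschitz estimate in $(x,y,w)$ uniform in $t$ and $(\xi,\zeta)$ (with constant $\max(L_\phi\tilde C, L_\Lambda)$, exactly the paper's Lemma \ref{lem:Lip_force2}), a growth bound ruling out finite-time blow-up, and Borel measurability of $t\mapsto F_\Lambda[\mu_t](\xi,\zeta,x,y,w)$, then invoke the Carathéodory existence--uniqueness theorem for a.e.\ fixed $(\xi,\zeta)$. The only minor differences are cosmetic: you use $\|\phi\|_\infty=M_\phi$ to get a sharper a priori bound (bounded first two components, sublinear third) where the paper states a linear-growth estimate $\tilde C_1(T)(1+|x|+|y|+|w|)$, and you add a discussion of measurability of the solution in the parameters $(\xi,\zeta)$, which the paper leaves implicit.
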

The proof relies on an application of the Carathéodory theorem. Let us first prove the Lipschitz property of the force. 
\begin{lemma} \label{lem:Lip_force2}
Suppose that the assumptions of Proposition \ref{pro:well-posed-characteristics} hold.  Then, for all $t \in [0,T]$, the force $F_\Lambda[\mu_t](\xi,\zeta,x,y,w)$ is Lipschitz with respect to $(x,y,w)$ i.e. for a.e. $\xi, \zeta \in I$, for $(x_1,y_1,w_1), (x_2,y_2,w_2)  \in \mathbb{R}^d \times \mathbb{R}^d \times \mathbb{R}$, we have 
\begin{equation}
\left| F_\Lambda[\mu_t](\xi,\zeta,x_1,y_1,w_1) - F_\Lambda[\mu_t](\xi,\zeta,x_2,y_2,w_2)\right| \leq \tilde C_1(T) \left(|x_1-x_2| + |y_1-y_2| + |w_1-w_2| \right)
\end{equation}
where $$\tilde C_1(T) := \max \left(L_\phi \tilde C,  L_{\Lambda}\right).$$ 
\end{lemma}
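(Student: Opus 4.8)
The plan is to exploit the block structure of the force $F_\Lambda[\mu_t]$ in \eqref{eq:vlasov-force}, estimating each of its three components separately and then assembling the bounds so that each component's Lipschitz constant is dominated by $\tilde C_1(T) = \max(L_\phi \tilde C, L_\Lambda)$.

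First I would treat the first component (the $x$-velocity), which depends on the state variables only through $x$. Subtracting the two evaluations and factoring out the difference of $\phi$ leads to
$$
\intt \tw \left[\phi(t,x_1,\ty) - \phi(t,x_2,\ty)\right] \mu_t^{\xi,\tz}(d\tx,d\ty,d\tw)\, d\tz.
$$
By the Lipschitz property of $\phi$ in its spatial arguments (Hypothesis \ref{hyp:phi}), the integrand is bounded by $L_\phi |x_1 - x_2|\,|\tw|$, so the whole expression is controlled by $L_\phi |x_1 - x_2| \int_I \left(\int_{\R^d \times \R^d \times \R} |\tw|\, \mu_t^{\xi,\tz}(d\tx,d\ty,d\tw)\right) d\tz$. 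The inner integral is exactly the first $w$-moment of the disintegrated measure, which by \eqref{eq:moment_borne} is $\leq \tilde C$ for a.e. $\tz$; since $|I|=1$, integrating in $\tz$ preserves the bound $\tilde C$. Hence the first component is Lipschitz in $x$ with constant $L_\phi \tilde C$. The identical argument applied to $\mu_t^{\zeta,\tz}$ and $\phi(t,y,\cdot)$ gives the Lipschitz bound $L_\phi \tilde C\,|y_1 - y_2|$ for the second component.

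Next, for the third component (the $w$-velocity) I would subtract the two evaluations of $\Lambda$ and invoke its Lipschitz property (Hypothesis \ref{hyp:psi}) in the variables $(x,y,w)$, applied to
$$
\inttt \left[\Lambda(\xi,\zeta,x_1,y_1,w_1,\tx,\ty,\tw) - \Lambda(\xi,\zeta,x_2,y_2,w_2,\tx,\ty,\tw)\right] \mu_t^\xztt(d\tx,d\ty,d\tw)\,d\txi\,d\tz.
$$
Here no moment is needed: the integrand is bounded by $L_\Lambda(|x_1-x_2|+|y_1-y_2|+|w_1-w_2|)$ uniformly in the integration variables, and since each $\mu_t^\xztt$ is a probability measure while $I^2$ has unit Lebesgue measure, the total mass integrates to $1$. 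This yields the Lipschitz bound $L_\Lambda(|x_1-x_2|+|y_1-y_2|+|w_1-w_2|)$ for the third component.

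Finally I would assemble the three estimates. Each component's Lipschitz constant is at most $\tilde C_1(T)=\max(L_\phi \tilde C, L_\Lambda)$, and bounding each component difference by $\tilde C_1(T)$ times the full sum $|x_1-x_2|+|y_1-y_2|+|w_1-w_2|$ (together with the fact that the product norm compares to the maximum of the component norms) gives the claimed inequality. I expect the only genuinely delicate point to be the control of the $|\tw|$ factor in the first two components: it is precisely the uniform first-moment bound \eqref{eq:moment_borne} assumed in Proposition \ref{pro:well-posed-characteristics} that allows these two estimates to close, whereas the third component requires only the probability normalization. All the estimates are understood for a.e. $\xi,\zeta$, consistently with the essential-supremum form of \eqref{eq:moment_borne}.
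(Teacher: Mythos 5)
Your proof is correct and follows essentially the same route as the paper's: a componentwise estimate combining the Lipschitz property of $\phi$ with the first-moment bound \eqref{eq:moment_borne} for the two state components, and the Lipschitz property of $\Lambda$ together with the probability normalization for the weight component. The only subtlety is that the constant $\max(L_\phi \tilde C, L_\Lambda)$ (rather than the sum $L_\phi \tilde C + L_\Lambda$) requires measuring the vector-valued force in the componentwise maximum norm, a point you correctly flag and which the paper's own proof passes over silently.
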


\begin{proof}
For a.e. $\xi, \zeta \in I$, for $(x_1,y_1,w_1), (x_2,y_2,w_2)  \in \mathbb{R}^d \times \mathbb{R}^d \times \mathbb{R}$, we have  
$$\begin{array}{l}
\dsp \left| F_\Lambda[\mu_t](\xi,\zeta,x_1,y_1,w_1) - F_\Lambda[\mu_t](\xi,\zeta,x_2,y_2,w_2)\right| \\
\dsp \leq  \left|  \intt \tw \phi(t,x_1,\ty)  \mu_t^{\xi,\tz} (d\tx,d\ty, d\tw) d\tz -  \intt \tw \phi(t,x_2,\ty)  \mu_t^{\xi,\tz} (d\tx,d\ty, d\tw) d\tz \right|\\
+\dsp  \left|  \intt \tw \phi(t,y_1,\ty) \mu_t^{\zeta,\tz} (d\tx,d\ty, d\tw) d\tz  -  \intt \tw \phi(t,y_2,\ty) \mu_t^{\zeta,\tz} (d\tx,d\ty, d\tw) d\tz \right|\\
+  \dsp  \left|   \inttt \Lambda( \xi, \zeta, x_1,y_1,w_1, \tx, \ty, \tw) \mu_t^\xztt(d\tx,d\ty,d\tw)d \txi d \tz \right. \\
  \dsp  \left. ~~~~~~~~~~~~~~~~~~~~~~~~~~~~~~~~~~~~~~~~~~-  \inttt \Lambda( \xi, \zeta, x_2,y_2,w_2, \tx, \ty, \tw) \mu_t^\xztt(d\tx,d\ty,d\tw)d \txi d \tz \right| \\
\dsp \leq   L_\phi \left( |x_1-x_2| + |y_1 - y_2| \right) \int_{I \times \R^d \times \R^d \times \R} |\tw| \mu_t^{\xi,\tz}(d\tx,d\ty,d\tw) d\tz   + L_\Lambda \left( |x_1-x_2| + |y_1 - y_2| + |w_1 - w_2| \right) 
\end{array}$$
where the last inequality is obtained using the regularity assumptions of the functions $\phi$ and $\Lambda$ in Hypothesis \ref{hyp:phi} and  \ref{hyp:psi}. We conclude by using  the bound on equation \eqref{eq:moment_borne}. 
\end{proof}

\begin{proof}[Proof of Proposition~\ref{pro:well-posed-characteristics}]
By a similar computation as in Lemma~\ref{lem:Lip_force2}, we obtain the linear growth estimate
\[
\left| F_\Lambda[\mu_t](\xi,\zeta,x,y,w) \right|
\le \tilde C_1(T)\bigl(1+|x|+|y|+|w|\bigr).
\]
Moreover, Lemma~\ref{lem:Lip_force2} ensures that the map
$(x,y,w) \mapsto F_\Lambda[\mu_t](\xi,\zeta,x,y,w)$ is Lipschitz uniformly in $t\in[0,T]$.
Finally, for a.e. $(\xi,\zeta)\in I^2$ and every $(x,y,w)\in\R^d\times\R^d\times\R$,
the map
\[
t\longmapsto F_\Lambda[\mu_t](\xi,\zeta,x,y,w)
\]
is Borel measurable, since it is defined as an integral of a Borel integrand
against the Borel family $(\mu_t)_{t\in[0,T]}$.
Therefore, $F_\Lambda[\mu_t](\xi,\zeta,\cdot)$ is a Carathéodory vector field.

Applying the Carathéodory existence and uniqueness theorem, we deduce that,
for a.e. $(\xi,\zeta)\in I^2$ and every $(x,y,w)\in\R^d\times\R^d\times\R$,
the characteristic system~\eqref{eq:charac} admits a unique absolutely continuous solution
$t\mapsto Z[\mu](t,\xi,\zeta,x,y,w)$ on $[0,T]$, which satisfies the differential equation
for almost every $t\in[0,T]$.
\end{proof}

 In the rest of the paper, we will use the following notation:
\begin{equation}\label{eq:characteristics-flow-map}
\mathcal{T}_t^{\xi,\zeta}[\mu](x,y,w):=Z [\mu](t,\xi,\zeta,x,y,w), \quad t\in [0,T],\,(x,y,w) \in \mathbb{R}^d \times \mathbb{R}^d \times \mathbb{R},\,\mbox{a.e. }\xi, \zeta\in I.
\end{equation}
{We actually have some a priori estimates.}
\begin{lemma} \label{lem:aprioriestimates}
Suppose that $\mu\in C([0,T],\mathcal{P}_{1,\nu}(I^2 \times \R^d \times \R^d \times \R))$ is a solution  to \eqref{eq:vlasov-equation}-\eqref{eq:vlasov-force} issued at $\mu_0$ which satisfies 
 $$ \esssup_{\xi, \zeta \in I} \int_{ \R^d \times \R^d \times \R}   w^2 \mu_0^{\xi,\zeta}(dx,dy,dw)  < + \infty.$$

 Then, for all $t \in [0,T]$, we have $$  \esssup_{\xi, \zeta \in I} \int_{ \R^d \times \R^d \times \R}  w^2 \mu_t^{\xi,\zeta}(dx,dy,dw)  < + \infty.$$
\end{lemma}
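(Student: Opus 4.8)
The plan is to exploit the sublinear growth of the weight dynamics encoded in Hypothesis~\ref{hyp:psi}, equation~\eqref{eq:psisublin}. The key structural observation is that the only component of the force $F_\Lambda[\mu_t]$ acting on the $w$-variable is the third one, and that the growth in~\eqref{eq:psisublin} is in $|w|$ (the weight of the edge being evolved) and \emph{not} in $|\tw|$. Since $|\Lambda(\xi,\zeta,x,y,w,\tx,\ty,\tw)|\le C_\Lambda(1+|w|)$ while $\mu_t^\xztt$ has unit mass, this component is bounded by $C_\Lambda(1+|w|)$. Consequently $w$ can grow at most exponentially along the flow, and a Grönwall argument should keep the second $w$-moment finite on $[0,T]$.

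Heuristically, I would argue along the characteristics of Proposition~\ref{pro:well-posed-characteristics}. Writing $W(t):=W[\mu](t,\xi,\zeta,x,y,w)$ for the $w$-component of $Z[\mu]$ solving~\eqref{eq:charac}, the bound~\eqref{eq:psisublin} gives $|\tfrac{d}{dt}W(t)|\le C_\Lambda(1+|W(t)|)$ for a.e.\ $t$, whence $1+|W(t)|\le (1+|w|)\,e^{C_\Lambda t}$ and therefore $|W(t)|^2\le 2(1+w^2)\,e^{2C_\Lambda t}$. Using that the solution is transported by the flow~\eqref{eq:characteristics-flow-map}, i.e.\ $\mu_t^{\xi,\zeta}=\mathcal{T}_t^{\xi,\zeta}[\mu]\#\mu_0^{\xi,\zeta}$ for a.e.\ $\xi,\zeta$, a change of variables yields
\[
\int_{\R^d\times\R^d\times\R} w^2\,\mu_t^{\xi,\zeta}(dx,dy,dw)
=\int_{\R^d\times\R^d\times\R} |W(t)|^2\,\mu_0^{\xi,\zeta}(dx,dy,dw)
\le 2e^{2C_\Lambda t}\Big(1+\int w^2\,\mu_0^{\xi,\zeta}\Big),
\]
and taking $\esssup_{\xi,\zeta\in I}$ together with the hypothesis on $\mu_0$ closes the estimate.

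The delicate point, and the main obstacle, is the a priori integrability needed to make this rigorous. Invoking Proposition~\ref{pro:well-posed-characteristics} requires the uniform first-moment bound~\eqref{eq:moment_borne} for all $t\in[0,T]$, which is not assumed (only the second moment at $t=0$ is), so a direct use of the flow representation is mildly circular; moreover the change of variables presupposes finiteness of $\int w^2\,\mu_t^{\xi,\zeta}$, which is exactly what we want. I would therefore carry out the estimate in the weak formulation of~\eqref{eq:vlasov-equation} with a truncation: test against a cutoff $\beta_R(w)$ with $0\le\beta_R\le w^2$, $\beta_R\uparrow w^2$, $\beta_R\in C^1$ bounded, $\beta_R'$ supported in $\{|w|\le 2R\}$, and — crucially — $|\beta_R'(w)|\,(1+|w|)\le C(1+\beta_R(w))$ uniformly in $R$. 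Since only the $w$-flux survives, this gives $\tfrac{d}{dt}\int\beta_R\,\mu_t^{\xi,\zeta}\le CC_\Lambda\big(1+\int\beta_R\,\mu_t^{\xi,\zeta}\big)$, Grönwall bounds $\int\beta_R\,\mu_t^{\xi,\zeta}$ by a constant depending only on $T$ and $\esssup_{\xi,\zeta}\int w^2\mu_0^{\xi,\zeta}$ uniformly in $R$ and a.e.\ in $\xi,\zeta$, and letting $R\to\infty$ by monotone convergence followed by $\esssup_{\xi,\zeta}$ concludes. The real care lies in designing $\beta_R$ so that the right-hand side closes on $\beta_R$ itself rather than on the not-yet-controlled full moment $w^2$; checking the inequality $|\beta_R'(w)|(1+|w|)\le C(1+\beta_R(w))$ region by region ($|w|\le R$, $R<|w|<2R$, $|w|\ge 2R$) is where the work is, while the Grönwall step, the monotone passage to the limit, and the supremum over the identities are routine.
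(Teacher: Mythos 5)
Your proposal is correct and takes essentially the same route as the paper: the paper's proof multiplies \eqref{eq:vlasov-equation} by $w^2$ and integrates in $(x,y,w)$ so that only the $w$-flux survives, then uses \eqref{eq:psisublin} together with $\int |w|\,\mu_t^{\xi,\zeta}(dx,dy,dw) \le 1+\int w^2\,\mu_t^{\xi,\zeta}(dx,dy,dw)$ to close a differential inequality on the second $w$-moment, concluding by Gronwall's lemma and taking the essential supremum in $(\xi,\zeta)$ --- exactly your estimate. Your truncation by $\beta_R$ is a rigorous implementation of the paper's formal step of testing against the unbounded function $w^2$ (and your observation that the characteristics route is circular is well taken), so it refines rather than departs from the paper's argument.
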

\begin{proof}
We multiply equation \eqref{eq:vlasov-equation} by $w^2$ and we integrate in $(x,y,w)$ to obtain
$$\begin{array}{rcl}
\dsp \frac{d}{dt}   \int_{ \R^d \times \R^d \times \R}  w^2 \mu_t^{\xi,\zeta}(dx,dy,dw)  & = &    \dsp 2  \int_{ \R^d \times \R^d \times \R}  w \left(\inttt \Lambda( \xi, \zeta, x,y,w, \tx, \ty, \tw) \mu_t^\xztt(d\tx,d\ty,d\tw)d \txi d \tz\right)\\  & &   ~~~~~~~~~~~~~~~~~~~~~~~~~~~~~~~~~~~~~~~~~~~~~~~~~~~~~~~~~~~~~~~~~~~~~~\mu_t^{\xi,\zeta}(dx,dy,dw)  \\
& \leq & \dsp  2 {C_\Lambda} \int_{ \R^d \times \R^d \times \R}   (|w| + w^2)  \mu_t^{\xi,\zeta}(dx,dy,dw)  \\
& \leq & \dsp   4{C_\Lambda} \left( 1 + \int_{ \R^d \times \R^d \times \R}   w^2  \mu_t^{\xi,\zeta}(dx,dy,dw) \right)
\end{array}$$
where we have used $$\int_{ \R^d \times \R^d \times \R}   |w|  \mu_t^{\xi,\zeta}(dx,dy,dw) \leq 1 + \int_{ \R^d \times \R^d \times \R}   w^2  \mu_t^{\xi,\zeta}(dx,dy,dw)$$ to obtain the last inequality. Thus, by Gronwall's lemma, we deduce that, for all $t \in [0,T]$, $$\int_{ \R^d \times \R^d \times \R}  w^2 \mu_t^{\xi,\zeta}(dx,dy,dw)  \leq  \left(\int_{ \R^d \times \R^d \times \R}   w^2  \mu_0^{\xi,\zeta}(dx,dy,dw) +  4{C_\Lambda}T \right) \exp (4{C_\Lambda}T)$$
which concludes the proof by first bounding the right-hand side by the essential supremum in $\xi, \zeta$ and then taking this same essential supremum on the left-hand side. 
\end{proof}
Thus, we can actually rewrite the Lipschitz property of the force $F_\Lambda[\mu_t]$ in the case where  $\mu\in C([0,T],\mathcal{P}_{1,\nu}(I^2 \times \R^d \times \R^d \times \R))$ is a solution  to \eqref{eq:vlasov-equation}-\eqref{eq:vlasov-force}.
\begin{lemma} \label{lem:Lip_force}
Let the interaction function $\phi$  satisfy Hypothesis \ref{hyp:phi}. Let the weight dynamics $\Lambda$ satisfy Hypotheses \ref{hyp:psi} and \ref{hyp:struct_lambda}. Suppose that  $\mu\in C([0,T],\mathcal{P}_{1,\nu}(I^2 \times \R^d \times \R^d \times \R))$ is a solution  to \eqref{eq:vlasov-equation}-\eqref{eq:vlasov-force} issued at $\mu_0$ which satisfies 
 $$    \esssup_{\xi, \zeta \in I}   \int_{ \R^d \times \R^d \times \R}   w^2 \mu_0^{\xi,\zeta}(dx,dy,dw)  < + \infty.$$
 
 Then, for all $t \in [0,T]$, the force $F_\Lambda[\mu_t](\xi,\zeta,x,y,w)$ is Lipschitz with respect to $(x,y,w)$ i.e. for a.e. $\xi, \zeta \in I$, for $(x_1,y_1,w_1), (x_2,y_2,w_2)  \in \mathbb{R}^d \times \mathbb{R}^d \times \mathbb{R}$, we have 
\begin{equation}
\left| F_\Lambda[\mu_t](\xi,\zeta,x_1,y_1,w_1) - F_\Lambda[\mu_t](\xi,\zeta,x_2,y_2,w_2)\right| \leq  C_1(T) \left(|x_1-x_2| + |y_1-y_2| + |w_1-w_2| \right)
\end{equation}
where $$C_1(T) := \max \left(L_\phi  \left( 2 + \left(  \esssup_{\xi, \zeta \in I}    \int_{ \R^d \times \R^d \times \R}   w^2 \mu_0^{\xi,\zeta}(dx,dy,dw) +  4{C_\Lambda}T \right) \exp (4{C_\Lambda}T) \right), L_{\Lambda}\right)$$ 
\end{lemma}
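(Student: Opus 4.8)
The plan is to obtain this lemma as a direct corollary of the two preceding results: the generic Lipschitz bound of Lemma~\ref{lem:Lip_force2} and the second-moment propagation of Lemma~\ref{lem:aprioriestimates}. The only difference between the present statement and Lemma~\ref{lem:Lip_force2} is the quantity playing the role of the constant $\tilde C$ in \eqref{eq:moment_borne}: there it is an abstract uniform bound on the first $w$-moment of $\mu_t$, whereas here I must produce such a bound explicitly, using that $\mu$ solves \eqref{eq:vlasov-equation}-\eqref{eq:vlasov-force} and that $\mu_0$ has finite second moment in $w$.

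First I would invoke Lemma~\ref{lem:aprioriestimates}, whose proof provides not merely finiteness but the quantitative Gronwall estimate
\[
\int_{\R^d\times\R^d\times\R} w^2\,\mu_t^{\xi,\zeta}(dx,dy,dw)
\le \left(\int_{\R^d\times\R^d\times\R} w^2\,\mu_0^{\xi,\zeta}(dx,dy,dw)+4C_\Lambda T\right)\exp(4C_\Lambda T),
\]
valid for a.e.\ $\xi,\zeta\in I$ and all $t\in[0,T]$. Taking the essential supremum over $(\xi,\zeta)$ on both sides yields a bound $M$ on the propagated second moment that is uniform in $t\in[0,T]$ and finite by the hypothesis on $\mu_0$.

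Next I would convert this into a control of the first moment via the elementary inequality $|w|\le 1+w^2$ together with the fact that each $\mu_t^{\xi,\zeta}$ is a probability measure, which gives
\[
\esssup_{\xi,\zeta\in I}\int_{\R^d\times\R^d\times\R} |w|\,\mu_t^{\xi,\zeta}(dx,dy,dw)\le 1+M
\]
for all $t\in[0,T]$. Since the $\tz$-integration in the definition \eqref{eq:vlasov-force} of $F_\Lambda$ is performed against Lebesgue measure of unit mass on $I$, the same constant controls $\intt |\tw|\,\mu_t^{\xi,\tz}(d\tx,d\ty,d\tw)\,d\tz$ and its $\zeta$-counterpart, so the value $\tilde C:=1+M$ is admissible in the sense of \eqref{eq:moment_borne}.

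Finally I would apply Lemma~\ref{lem:Lip_force2} with this explicit $\tilde C$. The resulting Lipschitz constant $\max(L_\phi\tilde C,L_\Lambda)$ is then dominated by the stated $C_1(T)$, since $1+M\le 2+M$; as a larger Lipschitz constant is always admissible, this establishes the claim in the asserted form. I do not expect any genuine obstacle here: all the analytical content already resides in Lemmas~\ref{lem:aprioriestimates} and~\ref{lem:Lip_force2}, and the present statement is essentially a bookkeeping corollary that records the Lipschitz constant of the force \emph{along a solution} in terms of the initial data alone — precisely the form that will be needed for the Gronwall-type uniqueness and stability arguments in the sequel.
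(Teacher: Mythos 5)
Your proposal is correct and follows essentially the same route as the paper: both combine the estimate of Lemma~\ref{lem:Lip_force2} with the propagated second-moment bound from Lemma~\ref{lem:aprioriestimates} and the elementary inequality $|w|\le 1+w^2$ to produce the explicit admissible constant. Your explicit remark that the resulting constant $\max(L_\phi(1+M),L_\Lambda)$ is dominated by the stated $C_1(T)$ (which uses $2+M$) is a small point the paper leaves implicit, but it is the same argument.
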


\begin{proof}
We follow the estimates of Lemma~\ref{lem:Lip_force2}. For a.e. $(\xi,\zeta)\in I^2$ and any
$(x_1,y_1,w_1),(x_2,y_2,w_2)\in\R^d\times\R^d\times\R$, we have
\begin{multline*}
\bigl|F_\Lambda[\mu_t](\xi,\zeta,x_1,y_1,w_1)-F_\Lambda[\mu_t](\xi,\zeta,x_2,y_2,w_2)\bigr|
\\  \leq  L_\phi \left( |x_1-x_2| + |y_1 - y_2| \right) \int_{I \times \R^d \times \R^d \times \R} |\tw| \mu_t^{\xi,\tz}(d\tx,d\ty,d\tw) d\tz   + L_\Lambda \left( |x_1-x_2| + |y_1 - y_2| + |w_1 - w_2| \right).
\end{multline*}

Since
\[
\int_{I\times\R^d\times\R^d\times\R}|\tilde w|\,\mu_t^{\xi,\tilde\zeta}(d\tilde x,d\tilde y,d\tilde w)\,d\tilde\zeta
\le \esssup_{\xi,\zeta\in I}\int |w|\,\mu_t^{\xi,\zeta}(dx,dy,dw),
\]
the Lipschitz constant $$C_1(T) = \max \left(L_\phi  \left( 2 + \left(  \esssup_{\xi, \zeta \in I}    \int_{ \R^d \times \R^d \times \R}   w^2 \mu_0^{\xi,\zeta}(dx,dy,dw) +  4{C_\Lambda}T \right) \exp (4{C_\Lambda}T) \right), L_{\Lambda}\right) $$ is obtained using the  elementary  inequality $$\int_{ \R^d \times \R^d \times \R}   |w|  \mu_t^{\xi,\zeta}(dx,dy,dw) \leq 1 + \int_{ \R^d \times \R^d \times \R}   w^2  \mu_t^{\xi,\zeta}(dx,dy,dw)$$ together with the bound provided by Lemma~\ref{lem:aprioriestimates}.
\end{proof}

\subsection{Well posedness of the Vlasov-type equation}

In this section, we will now specify the type of solutions we shall consider. To this end, let us introduce the notion of distributional solution. First, let us reformulate \eqref{eq:vlasov-equation}-\eqref{eq:vlasov-force}  in a form compatible with the notion of fibered probability measure $\mu \in \mathcal{P}_{\nu}( I^2 \times \mathbb{R}^d \times \mathbb{R}^d \times \mathbb{R})$ introduced in section \ref{sec:spaces}. It becomes
\begin{align}\label{eq:vlasov-equation2}
\left\{ \begin{aligned}
&\partial_t \mu_t +\divop_{x,y,w}(F_\Lambda[\mu_t]\mu_t)=0,\\
&\mu_{t=0}=\mu_0
\end{aligned} \right.
\end{align}
with $\mu_0 \in \mathcal{P}_{\nu}( I^2 \times \mathbb{R}^d \times \mathbb{R}^d \times \mathbb{R})$ and where $F_\Lambda[\mu_t]$ is defined in \eqref{eq:vlasov-force}.
\begin{definition}[Distributional solutions of \eqref{eq:vlasov-equation2}-\eqref{eq:vlasov-force}]\label{defi:distributional-solution-Vlasov-joint}
Consider any curve of probability measures $\mu\in (\mathcal{C}([0,T],  \mathcal{P}_{1,\nu}(I^2 \times \mathbb{R}^d \times \mathbb{R}^d \times \mathbb{R}))$ for some $T>0$. We say that $\mu$ is a distributional solution of the system \eqref{eq:vlasov-equation2}-\eqref{eq:vlasov-force} if
\begin{align}
&\int_0^T\int_{I^2 \times \mathbb{R}^d\times \mathbb{R}^d \times \mathbb{R}}\left(\partial_t\psi(t,\xi,\zeta,x,y,w)+F_\Lambda[\mu_t](\xi,\zeta,x,y,w)\cdot\nabla_{x,y,w} \psi(t,\xi,\zeta,x,y,w)\right)\\
&~~~~~~~~~~~~~~~~~~~~~~~~~~~~~~~~~~~~~~~~~~~~~~~~~~~~~~~~~~~~~~~~\,\mu_t(d\xi,d\zeta,dx,dy,dw)\,dt\nonumber \\
&\qquad=-\int_{I^2 \times \mathbb{R}^d\times \mathbb{R}^d \times \mathbb{R}} \psi(0,\xi,\zeta,x,y,w)\,\mu_0(d\xi,d\zeta,dx,dy,dw),\label{eq:distributional-solution-joint}
\end{align}
for all $\psi\in C^1_c([0,T)\times I^2 \times \mathbb{R}^d\times \mathbb{R}^d \times \mathbb{R})$.
\end{definition}
Standard arguments show that the solution to the Vlasov-type equation \eqref{eq:vlasov-equation2}-\eqref{eq:vlasov-force} can be written using the solution to the characteristics \eqref{eq:charac}  as 
\begin{equation} \label{eq:fixed_point}
\mu_t^{\xi,\zeta}= \mathcal{T}_t^{\xi,\zeta}[\mu] \# \mu_0 =: \mathcal{F}[\mu]_t^{\xi,\zeta},
\end{equation}
see for instance \cite[Proposition 4.5]{AyiPoyatoPouradierDuteil} in a non-adaptive hypergraph setting.
% where we recall that for $\mu_1, \mu_2$ measures, $T$ a measurable map and $A$ a measurable set, $\mu_1= T \# \mu_2$  means $$ \mu_1(A) = \mu_2(T^{-1}(A)).$$
Let us give a more precise statement.
\begin{prop}[Distributional solutions of \eqref{eq:vlasov-equation2}-\eqref{eq:vlasov-force}]\label{pro:distributional-solution-Vlasov-fibered}
Consider any curve of probability measures $\mu\in (\mathcal{C}([0,T],  \mathcal{P}_{1,\nu}(I^2 \times \mathbb{R}^d \times \mathbb{R}^d \times \mathbb{R}))$, and its associated Borel family $(\mu_t^{\xi,\zeta})_{(t,\xi,\zeta)\in [0,T]\times I^2}$ as in the disintegration Theorem \ref{theo:disintegration-time-dependent}. Then, the following conditions are equivalent:
\begin{enumerate}
\item $\mu$ is a distributional solution to \eqref{eq:vlasov-equation2}-\eqref{eq:vlasov-force}.
\item For a.e. $\xi,\zeta \in I$, $(\mu_t^{\xz})_{t\in [0,T]}$ is a distributional solution to \eqref{eq:vlasov-equation}-\eqref{eq:vlasov-force}, {\it i.e.},
\begin{multline}\label{eq:distributional-solution-fibered}
\int_0^T\int_{\mathbb{R}^d \times \mathbb{R}^d \times \mathbb{R}}\left(\partial_t\varphi(t,x,y,w)+F_\Lambda[\mu_t](t,\xi,\zeta,x,y,w)\cdot\nabla_{x,y,w} \varphi(t,x,y,w)\right)\,\mu_t^{\xi,\zeta}(dx,dy,dw)\,dt \\ =-\int_{\mathbb{R}^d \times \mathbb{R}^d \times \mathbb{R}}\varphi(0,x,y,w)\,\mu_0^{\xi,\zeta}(dx,dy,dw),
\end{multline}
for all $\varphi\in C^1_c([0,T)\times \mathbb{R}^d  \times \mathbb{R}^d \times \mathbb{R})$.
\item For a.e. $\xi, \zeta \in I$, $(\mu_t^\xz)_{t\in [0,T]}$ is the push forward along the flow map, {\it i.e.}, it satifies \eqref{eq:fixed_point}  $\mbox{a.e. }t\in [0,T]$
where $\mathcal{T}_t^\xi$ is given in \eqref{eq:characteristics-flow-map}.
\end{enumerate}
\end{prop}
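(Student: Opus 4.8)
The plan is to prove the three equivalences by establishing $(1)\Leftrightarrow(2)$ through the disintegration theorem and $(2)\Leftrightarrow(3)$ through the classical theory of linear continuity equations. The observation that makes this decomposition legitimate is that, although the force $F_\Lambda[\mu_t]$ depends nonlocally on $\mu$, the curve $\mu$ is \emph{fixed} throughout the statement; hence $F_\Lambda[\mu_t](\xi,\zeta,\cdot)$ is a prescribed Carathéodory vector field, Lipschitz in $(x,y,w)$ uniformly in $t$ by Lemma~\ref{lem:Lip_force2} and of linear growth, so that the characteristic flow $\mathcal{T}_t^{\xi,\zeta}[\mu]$ of Proposition~\ref{pro:well-posed-characteristics} is well defined for a.e.\ $(\xi,\zeta)\in I^2$. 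Note also that both the joint formulation~\eqref{eq:distributional-solution-joint} and the fibered one~\eqref{eq:distributional-solution-fibered} involve only $\partial_t$ and $\nabla_{x,y,w}$ of the test function, never a derivative in $(\xi,\zeta)$; the identities are frozen, which is precisely what allows a clean fiberwise treatment.

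For the implication $(2)\Rightarrow(1)$ I would start from the fibered weak formulation~\eqref{eq:distributional-solution-fibered}, which by hypothesis holds for a.e.\ $(\xi,\zeta)$ and \emph{every} $\varphi\in C^1_c$; in particular, for a.e.\ $(\xi,\zeta)$ it holds for the slice $\varphi=\psi(\cdot,\xi,\zeta,\cdot)$ of any joint test function $\psi\in C^1_c([0,T)\times I^2\times\R^d\times\R^d\times\R)$. Integrating the resulting fiberwise identity against $\nu(d\xi)\,\nu(d\zeta)$ and applying the time-dependent disintegration of Theorem~\ref{theo:disintegration-time-dependent} to both the bulk term and the initial datum recombines the fibered contributions into the joint formulation~\eqref{eq:distributional-solution-joint}. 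For the converse $(1)\Rightarrow(2)$ I would localize in $(\xi,\zeta)$: testing~\eqref{eq:distributional-solution-joint} with separated functions $\psi=\chi(\xi,\zeta)\,\varphi(t,x,y,w)$ and using disintegration to rewrite every term, the identity becomes $\int_{I^2}\chi(\xi,\zeta)\,G_\varphi(\xi,\zeta)\,\nu(d\xi)\nu(d\zeta)=0$, where $G_\varphi(\xi,\zeta)$ is the defect in~\eqref{eq:distributional-solution-fibered} on the fiber $(\xi,\zeta)$. The arbitrariness of $\chi$ then forces $G_\varphi(\xi,\zeta)=0$ for $\nu\otimes\nu$-a.e.\ $(\xi,\zeta)$.

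For $(2)\Leftrightarrow(3)$ I would invoke the classical equivalence between distributional solutions of a continuity equation with Lipschitz velocity and the push-forward of the initial measure along the flow. Fixing a generic $(\xi,\zeta)$ for which the fiber $(\mu_t^{\xi,\zeta})_t$ is Borel and the force is Lipschitz, the field $F_\Lambda[\mu_t](\xi,\zeta,\cdot)$ satisfies exactly the hypotheses under which the fiber transport equation admits a unique measure-valued distributional solution, given by $\mu_t^{\xi,\zeta}=\mathcal{T}_t^{\xi,\zeta}[\mu]\#\mu_0^{\xi,\zeta}$. This is the content of the cited \cite[Proposition~4.5]{AyiPoyatoPouradierDuteil}, whose proof adapts verbatim since, with $\mu$ frozen, the fiber equation is a bona fide linear continuity equation: one direction verifies by a change of variables and differentiation in time that the push-forward satisfies~\eqref{eq:distributional-solution-fibered}, while the other uses uniqueness of distributional solutions for Lipschitz fields.

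The hard part will be the measurability bookkeeping in $(1)\Rightarrow(2)$: the localization must produce a \emph{single} $\nu\otimes\nu$-null set outside of which~\eqref{eq:distributional-solution-fibered} holds for \emph{all} admissible $\varphi$ simultaneously. I would handle this by fixing a countable family $\{\varphi_n\}$ dense in $C^1_c([0,T)\times\R^d\times\R^d\times\R)$ for the relevant $C^1$ topology, applying the localization to each $\varphi_n$ to obtain a null set $\mathcal{N}_n$, and setting $\mathcal{N}=\bigcup_n\mathcal{N}_n$; a density argument then upgrades the identity from the countable family to all test functions, using the uniform bounds on $\phi$ and the first-moment control~\eqref{eq:moment_borne} to pass to the limit in the force term. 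The Borel measurability in $(\xi,\zeta)$ of each defect $G_{\varphi_n}$, needed to apply the fundamental lemma of the calculus of variations fiberwise, follows from the Borel structure of the family $(\mu_t^{\xi,\zeta})$ granted by Theorem~\ref{theo:disintegration-time-dependent}.
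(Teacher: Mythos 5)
Your proposal is correct and takes essentially the same route the paper itself relies on: the paper states this proposition without proof, deferring to ``standard arguments'' and to the analogous result in \cite[Proposition 4.5]{AyiPoyatoPouradierDuteil}, and your outline --- disintegration with separated test functions $\chi(\xi,\zeta)\varphi(t,x,y,w)$ plus a countable dense family of test functions to get a single null set for $(1)\Leftrightarrow(2)$, and the classical equivalence between measure-valued distributional solutions of a linear continuity equation with Lipschitz, linear-growth velocity and push-forward along its flow for $(2)\Leftrightarrow(3)$ --- is precisely that standard argument. Your key structural observation, that once the curve $\mu$ is frozen the force $F_\Lambda[\mu_t](\xi,\zeta,\cdot)$ is a prescribed Carath\'eodory field so each fiber equation is genuinely linear, is exactly the point on which the paper's appeal to the cited result rests.
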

Consequently, the proof of well-posedness of the Vlasov-type equation \eqref{eq:vlasov-equation2}-\eqref{eq:vlasov-force} is reduced to the resolution of the fixed-point problem \eqref{eq:fixed_point}.
In order to solve it, we shall crucially exploit the fact that the initial measure has compact support and that we are working on a finite time interval, which will allow us to obtain uniform bounds. Accordingly, we impose the following conditions on the initial measure:

\begin{hyp} \label{hyp:supp_compact}
The initial measure $\mu_0\in \mathcal{P}_{1,\nu}(I^2 \times \R^d \times \R^d \times \R)$  satisfies for almost every $\xi,\zeta$, $\text{supp}(\mu_0^{\xi,\zeta}) \subset B(0,R_X) \times B(0,R_Y) \times [-R_M,R_M]  $ for some constants $R_X,R_Y,R_M>0$.
\end{hyp}

\begin{rem}
Although restrictive from a mathematical standpoint, the assumption of bounded domains for both opinions and interaction weights is well motivated by empirical and modeling considerations. In survey-based studies, opinions are almost invariably collected on finite scales (Likert-type, 0–10 self-placement, etc.), see \cite{Gestefeld} for instance, or large-scale datasets such as European Social Survey.  Regarding interaction weights, empirical studies of social networks show that edge weights are constructed from normalized frequencies, probabilities, or intensities of interactions  (see for instance \cite{Hart,Mattie}), and thus naturally belong to bounded intervals. This provides strong justification for restricting our analysis to compact domains in both variables.
\end{rem}

 This context leads to the following result.
\begin{theorem}[Well-posedness of the Vlasov-type equation]\label{theo:well-posedness-vlasov}
 Let the interaction function $\phi$  satisfy Hypothesis \ref{hyp:phi}. Let the weight dynamics $\Lambda$ satisfy Hypotheses \ref{hyp:psi} and \ref{hyp:struct_lambda}. Let the initial measure $\mu_0$ satisfy Hypothesis \ref{hyp:supp_compact}. Then,  there exists a unique $\mu\in C([0,T],\mathcal{P}_{1,\nu}(I^2 \times \R^d \times \R^d \times \R))$ solution  to \eqref{eq:vlasov-equation}-\eqref{eq:vlasov-force} issued at $\mu_0$.
\end{theorem}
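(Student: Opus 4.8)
The plan is to recast the problem as a fixed-point equation and solve it by the Banach contraction principle in the weighted metric $d_1^\alpha$. By Proposition~\ref{pro:distributional-solution-Vlasov-fibered}, a curve $\mu$ is a distributional solution issued at $\mu_0$ if and only if it is a fixed point of the operator $\mathcal{F}$ defined in \eqref{eq:fixed_point}, namely $\mathcal{F}[\mu]_t^{\xi,\zeta}=\mathcal{T}_t^{\xi,\zeta}[\mu]\#\mu_0^{\xi,\zeta}$, where $\mathcal{T}_t^{\xi,\zeta}[\mu]$ is the characteristic flow from \eqref{eq:characteristics-flow-map}. Thus it suffices to show that $\mathcal{F}$ admits a unique fixed point in a suitable complete space.

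First I would identify the right invariant set. Using Hypothesis~\ref{hyp:supp_compact} together with growth bounds on the characteristics \eqref{eq:charac}, I would derive uniform support estimates: the weight obeys $\left|\tfrac{d}{dt}W\right|\le C_\Lambda(1+|W|)$ by \eqref{eq:psisublin}, so $|W(t)|\le (R_M+1)e^{C_\Lambda T}-1=:R_M(T)$ independently of $\mu$; then $\left|\tfrac{d}{dt}X\right|\le M_\phi R_M(T)$ and likewise for $Y$ whenever the weights of $\mu$ are bounded by $R_M(T)$, yielding constants $R_X(T),R_Y(T)$. Setting $K:=B(0,R_X(T))\times B(0,R_Y(T))\times[-R_M(T),R_M(T)]$, this makes the set $\mathcal{X}:=\{\mu\in \mathcal{C}([0,T],\mathcal{P}_{1,\nu}(I^2\times\R^d\times\R^d\times\R)):\ \supp\mu_t^{\xi,\zeta}\subset K\ \text{for all }t,\ \text{a.e. }\xi,\zeta\}$ closed (hence complete) and invariant under $\mathcal{F}$. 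The bounded weights also guarantee the moment bound \eqref{eq:moment_borne} with $\tilde C=R_M(T)$, so Proposition~\ref{pro:well-posed-characteristics} and Lemma~\ref{lem:Lip_force2} apply on $\mathcal{X}$; continuity in time of $t\mapsto\mathcal{F}[\mu]_t$ follows from continuity of the flow and dominated convergence.

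The core of the argument is the contraction estimate. For $\mu,\bar\mu\in\mathcal{X}$, testing against $1$-Lipschitz functions gives
$$d_{\rm BL}\!\left(\mathcal{F}[\mu]_t^{\xi,\zeta},\mathcal{F}[\bar\mu]_t^{\xi,\zeta}\right)\le \int \left|\mathcal{T}_t^{\xi,\zeta}[\mu](x,y,w)-\mathcal{T}_t^{\xi,\zeta}[\bar\mu](x,y,w)\right|\,\mu_0^{\xi,\zeta}(dx,dy,dw),$$
and I would control the flow difference by a Gronwall argument. Writing $Z=Z[\mu]$, $\bar Z=Z[\bar\mu]$, I split $\tfrac{d}{dt}|Z-\bar Z|\le |F_\Lambda[\mu_t](\xi,\zeta,Z)-F_\Lambda[\mu_t](\xi,\zeta,\bar Z)|+|F_\Lambda[\mu_t](\xi,\zeta,\bar Z)-F_\Lambda[\bar\mu_t](\xi,\zeta,\bar Z)|$; the first term is bounded by $\tilde C_1(T)|Z-\bar Z|$ via Lemma~\ref{lem:Lip_force2}, while the second is a \emph{force-stability} term. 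Integrating the latter over $(\xi,\zeta)\in I^2$ and using Fubini, the three components collapse into multiples of $d_1(\mu_t,\bar\mu_t)$, leading to $d_1(\mathcal{F}[\mu]_t,\mathcal{F}[\bar\mu]_t)\le C\int_0^t d_1(\mu_s,\bar\mu_s)\,ds$. Passing to the equivalent norm $d_1^\alpha$ then yields $d_1^\alpha(\mathcal{F}[\mu],\mathcal{F}[\bar\mu])\le (C/\alpha)\,d_1^\alpha(\mu,\bar\mu)$, and choosing $\alpha$ large makes $\mathcal{F}$ a strict contraction on the complete space $(\mathcal{X},d_1^\alpha)$, so Banach's theorem provides the unique solution.

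The hard part will be the force-stability term, because $F_\Lambda$ contains the unbounded factor $\tw$ (first two components) and the sublinear term $\Lambda$ (third component), neither of which is directly a bounded-Lipschitz test function against which $d_{\rm BL}$ is defined. The key is precisely the uniform compact-support propagation on $\mathcal{X}$: restricted to $K$, the integrands $(\tx,\ty,\tw)\mapsto \tw\,\phi(t,\bar x,\ty)$ and $(\tx,\ty,\tw)\mapsto\Lambda(\xi,\zeta,\bar x,\bar y,\bar w,\tx,\ty,\tw)$ are bounded and Lipschitz with constants depending only on $M_\phi,L_\phi,L_\Lambda,C_\Lambda$ and $R_M(T)$, uniformly in $(\bar x,\bar y,\bar w)\in K$. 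Extending them to global bounded-Lipschitz functions agreeing with them on $K$ lets me estimate each difference by $\|\cdot\|_{\rm BL}\,d_{\rm BL}(\mu_t^{\cdot,\cdot},\bar\mu_t^{\cdot,\cdot})$ and obtain the bound above. Note that Hypothesis~\ref{hyp:struct_lambda} is not needed for this argument, consistently with the fact that it only intervenes in Section~\ref{sec:independance}.
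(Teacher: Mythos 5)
Your proposal is correct and follows essentially the same route as the paper: reduction to the fixed-point problem \eqref{eq:fixed_point}, propagation of compact support, cutoff/extension of the integrands $\tw\,\phi$ and $\Lambda$ into admissible bounded-Lipschitz test functions, a Gronwall estimate on the flow difference, and Banach contraction in the weighted metric $d_1^\alpha$. If anything, your explicit construction of the closed invariant set $\mathcal{X}$ tightens a point the paper leaves implicit, since the paper invokes Proposition~\ref{prop:compacite} (stated for curves satisfying \eqref{eq:fixed_point}) for the arbitrary curves $\mu,\bar\mu$ appearing in its contraction estimate.
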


In the following proposition, we first observe that, since the evolution is considered over a finite time interval and the initial measure has compact support, the support of the solution necessarily remains bounded for all $t \in [0,T]$.
\begin{prop} \label{prop:compacite}  Let the interaction function $\phi$  satisfy Hypothesis \ref{hyp:phi}. Let the weight dynamics $\Lambda$ satisfy Hypotheses \ref{hyp:psi} and \ref{hyp:struct_lambda}. Let the initial measure $\mu_0$ satisfy Hypothesis \ref{hyp:supp_compact}.  Then, for all $t \in [0,T]$, for $\mu$ satisfying \eqref{eq:fixed_point},  we have $\text{supp}(\mu_t^{\xi,\zeta})  \subset B_T^{\phi, \Lambda}(R_X,R_Y,R_M)$ with 
\begin{multline*}
B_T^{\phi, \Lambda}(R_X,R_Y,R_M) := B(0,R_X + T M_\phi m_1) \times B(0,R_Y + T M_\phi m_1) \times [- \bar R_M , \bar R_M ]
\end{multline*}
with $\dsp m_1 := \left(2 + \left(   \esssup_{\xi, \zeta \in I}   \int_{ \R^d \times \R^d \times \R}   w^2 \mu_0^{\xi,\zeta}(dx,dy,dw) +  4{C_\Lambda}T \right) \exp (4{C_\Lambda}T) \right)$ and $\bar R_M :=  R_M (1+T C_\Lambda) 
 + T C_\Lambda$.
\end{prop}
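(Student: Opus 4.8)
The plan is to exploit the Lagrangian representation \eqref{eq:fixed_point}: since $\mu_t^{\xi,\zeta} = \mathcal{T}_t^{\xi,\zeta}[\mu]\#\mu_0^{\xi,\zeta}$ and the flow map $\mathcal{T}_t^{\xi,\zeta}[\mu]$ is continuous (it is the data-to-solution map of the Carathéodory system \eqref{eq:charac}, whose right-hand side is Lipschitz in $(x,y,w)$ by Lemma \ref{lem:Lip_force}), the support of $\mu_t^{\xi,\zeta}$ is contained in the image $\mathcal{T}_t^{\xi,\zeta}[\mu]\big(\supp \mu_0^{\xi,\zeta}\big)$, which is compact as the continuous image of a compact set. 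By Hypothesis \ref{hyp:supp_compact} it therefore suffices to bound each characteristic $Z[\mu](t,\xi,\zeta,x,y,w) = (X[\mu],Y[\mu],W[\mu])$ issued from an initial datum $(x,y,w)\in B(0,R_X)\times B(0,R_Y)\times[-R_M,R_M]$, uniformly in $t\in[0,T]$ and in a.e. $\xi,\zeta\in I$.

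I would first treat the weight component $W[\mu]$, which is the only one whose velocity field depends on its own value. Using the third line of \eqref{eq:vlasov-force} together with the sublinear bound \eqref{eq:psisublin} and the fact that $\mu_t^{\tilde\xi,\tilde\zeta}$ integrates to one over $I^2\times\mathbb{R}^d\times\mathbb{R}^d\times\mathbb{R}$, one obtains $\big|\tfrac{d}{dt}W[\mu](t)\big| \le C_\Lambda\big(1+|W[\mu](t)|\big)$. A Grönwall argument starting from $|W[\mu](0)|\le R_M$ then yields a bound $|W[\mu](t)|\le \bar R_M$ for all $t\in[0,T]$, which is precisely the third factor of $B_T^{\phi,\Lambda}(R_X,R_Y,R_M)$.

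For the state components $X[\mu]$ and $Y[\mu]$ the velocity fields are given by the first two lines of \eqref{eq:vlasov-force} and do not depend on the position itself, only on the weight moment of $\mu_t$. Bounding $\phi$ by $M_\phi$ via \eqref{eq:bound_phi} gives $\big|\tfrac{d}{dt}X[\mu](t)\big| \le M_\phi \int_{I\times\mathbb{R}^d\times\mathbb{R}^d\times\mathbb{R}} |\tilde w|\,\mu_t^{\xi,\tilde\zeta}(d\tilde x,d\tilde y,d\tilde w)\,d\tilde\zeta$, and the inner integral is controlled by the uniform first-moment bound $\esssup_{\xi,\zeta\in I}\int_{\mathbb{R}^d\times\mathbb{R}^d\times\mathbb{R}}|w|\,\mu_t^{\xi,\zeta}(dx,dy,dw)\le m_1$. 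Integrating in time then produces $|X[\mu](t)|\le R_X + TM_\phi m_1$, and symmetrically $|Y[\mu](t)|\le R_Y + TM_\phi m_1$. Collecting the three estimates gives the claimed inclusion.

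The main point to be careful about is the constant $m_1$ controlling the first moment of the weight variable: it is not available a priori but must be extracted from the propagation of the second moment established in Lemma \ref{lem:aprioriestimates}, combined with the elementary inequality $\int|w|\,d\mu_t\le 1+\int w^2\,d\mu_t$. Since the compact-support Hypothesis \ref{hyp:supp_compact} guarantees a finite second moment of $\mu_0$, Lemma \ref{lem:aprioriestimates} applies (using that a fixed point of \eqref{eq:fixed_point} is a genuine distributional solution, so that no circularity is introduced), and this is what feeds the $X,Y$ bounds. The delicate step is thus to close the weight estimate \emph{before} it is used in the position estimates; equivalently, one may note that the weight bound $\bar R_M$ derived in the second step already controls $\int|w|\,d\mu_t$ directly, rendering the argument entirely self-contained.
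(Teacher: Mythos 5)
Your overall strategy is the same as the paper's: represent $\mu_t^{\xi,\zeta}$ as the pushforward of $\mu_0^{\xi,\zeta}$ along the characteristics and bound each component of $Z[\mu]$ starting from the support of $\mu_0^{\xi,\zeta}$. Your treatment of $X[\mu]$ and $Y[\mu]$ matches the paper exactly: bound $\phi$ by $M_\phi$ via \eqref{eq:bound_phi}, control the first weight moment by $m_1$ through Lemma \ref{lem:aprioriestimates} together with $\int |w|\,d\mu_t \le 1+\int w^2\,d\mu_t$, and integrate in time; your remark that a fixed point of \eqref{eq:fixed_point} is a genuine distributional solution, so that Lemma \ref{lem:aprioriestimates} applies without circularity, is the correct justification.

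The gap is in the weight component. From the self-referential inequality $\bigl|\tfrac{d}{dt}W[\mu](t)\bigr|\le C_\Lambda\bigl(1+|W[\mu](t)|\bigr)$ with $|W[\mu](0)|\le R_M$, Gr\"onwall's lemma gives
\[
|W[\mu](t)|\le (R_M+C_\Lambda t)\,e^{C_\Lambda t},
\]
an exponential bound, and \emph{not} $\bar R_M=R_M(1+TC_\Lambda)+TC_\Lambda$: since $e^{C_\Lambda T}\ge 1+C_\Lambda T$, one has $(R_M+C_\Lambda T)e^{C_\Lambda T}\ge \bar R_M$, with strict inequality as soon as $C_\Lambda T>0$. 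So your assertion that Gr\"onwall ``yields $|W[\mu](t)|\le \bar R_M$'' is false as written; what your argument actually proves is the inclusion with the weight interval enlarged to $[-(R_M+C_\Lambda T)e^{C_\Lambda T},(R_M+C_\Lambda T)e^{C_\Lambda T}]$. The paper, by contrast, reaches $\bar R_M$ by inserting $|W[\mu](s)|\le R_M$ directly into the sublinear bound \eqref{eq:psisublin}, i.e. $|W[\mu](t)|\le R_M+TC_\Lambda(1+R_M)$ — a one-pass estimate in which the weight is frozen at its initial size, with no Gr\"onwall step at all; your differential inequality correctly records why such freezing is not innocent, but then the constant you can honestly close is the exponential one. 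This mismatch is harmless for the way the proposition is used later (Lemma \ref{lem:normes_BL} and the fixed-point and stability arguments only require \emph{some} finite radius depending on $T$ and the data), but as a proof of the statement with the constants as written, the final identification with $\bar R_M$ does not go through. The same caveat applies to your closing suggestion of bypassing Lemma \ref{lem:aprioriestimates} by using the weight bound to control $\int|w|\,d\mu_t$: it is a legitimate self-contained variant, but it again produces constants (both in place of $m_1$ and of $\bar R_M$) different from those in the statement.
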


\begin{proof}
Let $(x,y,w)\in \supp(\mu_0^{\xi,\zeta})$. By Hypothesis~\ref{hyp:supp_compact},
$|x|\le R_X$, $|y|\le R_Y$ and $|w|\le R_M$.
By definition of the characteristics, for all $t\in[0,T]$,

$\begin{aligned}
X[\mu](t,\xi,\zeta,x,y,w)  &= x + \int_0^t \intt \tilde w\,\phi(s,X[\mu](s,\xi,\zeta,x,y,w) ,\tilde y)\,
\mu_s^{\xi,\tilde\zeta}(d\tilde x,d\tilde y,d\tilde w)\,d\tilde\zeta\,ds,\\
Y[\mu](t,\xi,\zeta,x,y,w) &= y + \int_0^t \intt  \tilde w\,\phi(s,Y[\mu](s,\xi,\zeta,x,y,w),\tilde y)\,
\mu_s^{\zeta,\tilde\zeta}(d\tilde x,d\tilde y,d\tilde w)\,d\tilde\zeta\,ds,
\end{aligned}
$

\begin{multline}
W[\mu](t,\xi,\zeta,x,y,w)  \\
= w +
\int_0^t  \inttt 
\Lambda(\xi,\zeta,X[\mu](s,\xi,\zeta,x,y,w),Y[\mu](s,\xi,\zeta,x,y,w),W[\mu](s,\xi,\zeta,x,y,w)  ,\tilde x,\tilde y,\tilde w)\, \\
\mu_s^{\tilde\xi,\tilde\zeta}(d\tilde x,d\tilde y,d\tilde w)\,d\tilde\xi d\tilde\zeta\,ds.
\end{multline}

Using the bounds \eqref{eq:bound_phi} and \eqref{eq:psisublin} together with
Lemma~\ref{lem:aprioriestimates}, we obtain
\[
|X[\mu](t,\xi,\zeta,x,y,w)|\le R_X + T M_\phi m_1,\qquad
|Y[\mu](t,\xi,\zeta,x,y,w)|\le R_Y + T M_\phi m_1,
\]
and
\[
|W[\mu](t,\xi,\zeta,x,y,w) |\le R_M + T C_\Lambda(1+R_M)
= R_M(1+TC_\Lambda)+TC_\Lambda.
\]
Therefore,
\[
\supp(\mu_t^{\xi,\zeta}) \subset
B(0,R_X+TM_\phi m_1)\times B(0,R_Y+TM_\phi m_1)\times[-\bar R_M,\bar R_M],
\]
with $\bar R_M:=R_M(1+TC_\Lambda)+TC_\Lambda$, which concludes the proof.
\end{proof}

 Let us define $\tilde B_T^{\phi,\Lambda}(R_X,R_Y,R_M) := B(0,R_X + T M_\phi m_1+1) \times B(0,R_Y + T M_\phi m_1+1 ) \times [- (\bar R_M+1) , \bar R_M +1 ]$. We know that we can build a smooth function $\chi_T$ such that $$ 
 \chi_T : (x,y,w) \mapsto \left\{\begin{array}{l} 1 \text{ if } (x,y,w) \in  B_T^{\phi,\Lambda}(R_X,R_Y,R_M)\\
 0 \text{ if } (x,y,w) \notin \tilde B_T^{\phi,\Lambda}(R_X,R_Y,R_M)
 \end{array}\right. $$ such that $\chi_T \in \mathcal{C}^\infty(\R^d \times \R^d \times \R)$ (see \cite[Proposition 2.25]{Lee} for instance). The next lemma provides crucial bounds that will be instrumental in the proof of Theorem \ref{theo:well-posedness-vlasov}.
\begin{lemma} \label{lem:normes_BL}
For $(\xi, \zeta) \in I^2$,  $(x,y,w) \in \tilde B_T^{\phi,\Lambda}(R_X,R_Y,R_M) $, we define $$\varphi_1 :(\tx,\ty, \tw) \mapsto \tw \chi_T(\tx,\ty, \tw)$$ $$\varphi_2 :(\tx,\ty, \tw) \mapsto \chi_T(\tx,\ty, \tw) \Lambda(\xi,\zeta,x,y,w,\tx,\ty,\tw),$$  $$\varphi_3 :(\tx,\ty, \tw) \mapsto \varphi_1(\tx,\ty, \tw)  \phi(t,x,\ty)$$ and  $$\varphi_4 :(\tx,\ty, \tw) \mapsto \varphi_1(\tx,\ty, \tw)  \phi(t,y,\ty).$$ Then, we have $$\| \varphi_2 \|_{\text{BL}} \leq  L_2,  \, \| \varphi_3 \|_{\text{BL}} \leq  L_3 \text{ and }  
\| \varphi_4 \|_{\text{BL}} \leq L_3 $$ 
with $$L_2: =\max (C_\Lambda(2+\bar R_M) ,   \|\nabla \chi_T \|_{L^\infty} C_\Lambda(2+\bar R_M), L_\Lambda)$$ and $$\dsp L_3 := \max  \left( (\bar R_M + 1)M_\phi, M_\phi (1 + (\bar R_M +1)  \|\nabla \chi_T \|_{L^\infty}), (1+\bar R_M) L_\phi\right).$$
\end{lemma}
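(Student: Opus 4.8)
The plan is to bound, for each $\varphi_i$, the two ingredients of the bounded–Lipschitz norm $\|\varphi_i\|_{\text{BL}}=\max\{\|\varphi_i\|_{L^\infty},[\varphi_i]_{\rm Lip}\}$ separately, and then to collect the resulting quantities into $L_2$ and $L_3$. Throughout I would exploit the following elementary properties of the cutoff: $\chi_T\in\mathcal C^\infty$ with $0\le\chi_T\le 1$, so that $\|\chi_T\|_{L^\infty}\le 1$ and $[\chi_T]_{\rm Lip}\le\|\nabla\chi_T\|_{L^\infty}$, while $\chi_T$ and $\nabla\chi_T$ are supported in the compact set $\tilde B_T^{\phi,\Lambda}(R_X,R_Y,R_M)$, on which $|\tw|\le\bar R_M+1$. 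The structural bounds already available are $\|\phi\|_{L^\infty}\le M_\phi$ and $[\phi(t,x,\cdot)]_{\rm Lip}\le L_\phi$ from Hypothesis \ref{hyp:phi}, and, in the tilde variables with $(\xi,\zeta,x,y,w)$ frozen, $|\Lambda|\le C_\Lambda(1+|\tw|)$ together with $[\Lambda]_{\rm Lip}\le L_\Lambda$ from Hypothesis \ref{hyp:psi}; on $\supp\chi_T$ the first of these yields the pointwise bound $|\Lambda|\le C_\Lambda(2+\bar R_M)$. As a preliminary step I would record the bounds for $\varphi_1=\tw\,\chi_T$, since $\varphi_3$ and $\varphi_4$ are built from it: namely $\|\varphi_1\|_{L^\infty}\le\bar R_M+1$ and $[\varphi_1]_{\rm Lip}\le 1+(\bar R_M+1)\|\nabla\chi_T\|_{L^\infty}$.

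For $\varphi_2=\chi_T\,\Lambda$ the sup norm is immediate from $\|\chi_T\|_{L^\infty}\le 1$ and the support bound on $\Lambda$, giving $\|\varphi_2\|_{L^\infty}\le C_\Lambda(2+\bar R_M)$. For the Lipschitz seminorm I would argue through the gradient: since $\Lambda$ is Lipschitz it is differentiable a.e. with $|\nabla\Lambda|\le L_\Lambda$, and as $\nabla(\chi_T\Lambda)=\Lambda\,\nabla\chi_T+\chi_T\,\nabla\Lambda$ vanishes off $\supp\chi_T$, one obtains a.e.
\[
|\nabla(\chi_T\Lambda)|\le\Big(\sup_{\supp\chi_T}|\Lambda|\Big)\|\nabla\chi_T\|_{L^\infty}+\|\chi_T\|_{L^\infty}\,L_\Lambda\le C_\Lambda(2+\bar R_M)\|\nabla\chi_T\|_{L^\infty}+L_\Lambda .
\]
Each of the three quantities appearing here is one of the terms entering $L_2=\max\big(C_\Lambda(2+\bar R_M),\,\|\nabla\chi_T\|_{L^\infty}C_\Lambda(2+\bar R_M),\,L_\Lambda\big)$, so that $\|\varphi_2\|_{\text{BL}}\le L_2$.

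For $\varphi_3=\varphi_1\,\phi(t,x,\cdot)$, viewed as a function of $(\tx,\ty,\tw)$ through the dependence of $\phi$ on $\ty$ alone, I would use the product rule for bounded–Lipschitz functions, both factors being globally bounded and Lipschitz: the sup norm obeys $\|\varphi_3\|_{L^\infty}\le\|\varphi_1\|_{L^\infty}M_\phi\le(\bar R_M+1)M_\phi$, while
\[
[\varphi_3]_{\rm Lip}\le\|\varphi_1\|_{L^\infty}[\phi(t,x,\cdot)]_{\rm Lip}+M_\phi[\varphi_1]_{\rm Lip}\le(\bar R_M+1)L_\phi+M_\phi\big(1+(\bar R_M+1)\|\nabla\chi_T\|_{L^\infty}\big).
\]
These are precisely the remaining two entries of $L_3$, and since replacing $\phi(t,x,\cdot)$ by $\phi(t,y,\cdot)$ changes neither $M_\phi$ nor $L_\phi$, the identical estimate yields $\|\varphi_3\|_{\text{BL}},\,\|\varphi_4\|_{\text{BL}}\le L_3$.

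The computation is essentially bookkeeping once the right factorizations are isolated; the one point requiring care, and the \emph{very reason} for introducing $\chi_T$, is that $\tw$ and $\Lambda$ are unbounded in the weight variable, so neither $\varphi_1$ nor $\Lambda$ is globally bounded and $\chi_T\Lambda$ is not amenable to a naive product estimate. Multiplying by $\chi_T$ restores global bounded–Lipschitz regularity without altering the relevant integrals against $\mu_t^{\xi,\zeta}$, whose support lies in $B_T^{\phi,\Lambda}(R_X,R_Y,R_M)$ by Proposition \ref{prop:compacite}, where $\chi_T\equiv 1$. The main technical obstacle is therefore to separate cleanly the two sources of variation in each product, the variation of the smooth cutoff (costing a factor $\|\nabla\chi_T\|_{L^\infty}$ times the magnitude of the companion factor on the compact support) and the intrinsic variation of $\phi$ or $\Lambda$, and to check that each resulting contribution is governed by the corresponding entry of $L_2$ or $L_3$. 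Strictly speaking the Lipschitz bounds above are sums of two such entries rather than their maximum, but this is immaterial for the sequel, where only the finiteness of these norms is used.
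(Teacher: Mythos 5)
Your proof is correct and follows essentially the same approach as the paper: estimate the sup norm and the Lipschitz seminorm of each $\varphi_i$ separately via a product (Leibniz) decomposition, using the compact support of $\chi_T$ to tame the unbounded weight factor — the only cosmetic difference being that you bound $[\varphi_2]_{\rm Lip}$ through the a.e.\ gradient $\nabla(\chi_T\Lambda)=\Lambda\nabla\chi_T+\chi_T\nabla\Lambda$ where the paper uses direct difference quotients. Two minor remarks: first, Hypothesis \ref{hyp:psi} bounds $|\Lambda|$ by $C_\Lambda(1+|w|)$ in the \emph{frozen} fifth argument, not by $C_\Lambda(1+|\tw|)$ as you quote, but since $|w|\le \bar R_M+1$ for $(x,y,w)\in\tilde B_T^{\phi,\Lambda}(R_X,R_Y,R_M)$ this yields the bound $C_\Lambda(2+\bar R_M)$ you actually use (globally in the tilde variables, not only on $\supp \chi_T$), so nothing breaks; second, your closing observation about sums versus maxima is apt — the paper's own proof silently bounds the sum $\|\nabla\chi_T\|_{L^\infty}C_\Lambda(2+\bar R_M)+L_\Lambda$ by the maximum of its two terms, so the stated constants $L_2$, $L_3$ should strictly be sums (or doubled), which, as you note, is immaterial for the way the lemma is used later.
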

\begin{proof}
For any $(\tx,\ty,\tw), (\bar x, \bar y, \bar z) \in \R^d \times \R^d \times \R$, we have 
$$\begin{array}{l}
|\varphi_2 (\tx,\ty, \tw) - \varphi_2 (\bar x, \bar y, \bar z)| \\
\dsp \leq   |\chi_T(\tx,\ty, \tw) - \chi_T  (\bar x, \bar y, \bar z) |  |\Lambda(\xi,\zeta,x,y,w,\tx,\ty,\tw)| \\
~~~~~~~~~~~~~~~~~~~~~~~~~~~~~~~~~~+ |\chi_T  (\bar x, \bar y, \bar z) | |  \Lambda(\xi,\zeta,x,y,w,\tx,\ty,\tw) - \Lambda(\xi,\zeta,x,y,w, \bar x, \bar y, \bar z) |\\
\leq  \|\nabla \chi_T \|_{L^\infty} |(\tx,\ty,\tw) - (\bar x, \bar y, \bar z) | C_\Lambda(1+|w|) + L_\Lambda |(\tx,\ty,\tw) - (\bar x, \bar y, \bar z) | \\
\leq \max (  \|\nabla \chi_T \|_{L^\infty} C_\Lambda(2+\bar R_M), L_\Lambda) |(\tx,\ty,\tw) - (\bar x, \bar y, \bar z) |.
\end{array}$$

Moreover,
\[
\|\varphi_2\|_{L^\infty}
\le \|\chi_T\|_{L^\infty}  C_\Lambda(1+1+\bar R_M)
\le C_\Lambda(2+\bar R_M).
\]
Therefore,
\[
\|\varphi_2\|_{\mathrm{BL}}
\le \max\Bigl(C_\Lambda(2+\bar R_M),\,
\|\nabla\chi_T\|_{L^\infty}C_\Lambda(2+\bar R_M),\,L_\Lambda\Bigr)
=: L_2.
\]
Next, we observe that $$\|\nabla \varphi_1 \|_{L^\infty} \leq 1 + (\bar R_M +1)  \|\nabla \chi_T \|_{L^\infty}$$ and we get
$$\begin{array}{l}
|\varphi_3 (\tx,\ty, \tw) - \varphi_3 (\bar x, \bar y, \bar z)| \\
\leq |\varphi_1 (\tx,\ty, \tw) - \varphi_1 (\bar x, \bar y, \bar z)| M_\phi + \| \varphi_1\|_{L^\infty} L_\phi |\ty - \bar y| \\
\leq M_\phi (1 + (\bar R_M +1)  \|\nabla \chi_T \|_{L^\infty})  |(\tx,\ty,\tw) - (\bar x, \bar y, \bar z) |  +  \| \varphi_1\|_{L^\infty} L_\phi |\ty - \bar y| \\
\leq \max \left( M_\phi (1 + (\bar R_M +1)  \|\nabla \chi_T \|_{L^\infty}), (1+\bar R_M) L_\phi\right) |(\tx,\ty,\tw) - (\bar x, \bar y, \bar z) |.
\end{array}$$
Since $
\|\varphi_1\|_{L^\infty}
=\sup|\tw\,\chi_T(\tx,\ty,\tw)|
\le \bar R_M+1$, it follows that \[
\|\varphi_3\|_{L^\infty}
\le (\bar R_M+1)M_\phi.
\]
Hence,
\[
\|\varphi_3\|_{\mathrm{BL}}
\le \max\Bigl((\bar R_M+1)M_\phi,\,
M_\phi(1+(\bar R_M+1)\|\nabla\chi_T\|_{L^\infty}),\,
(\bar R_M+1)L_\phi\Bigr)
=: L_3.
\]
The same estimate holds for $\varphi_4$ by the same arguments, which concludes the proof.
\end{proof}

As previously mentioned, the proof of Theorem \ref{theo:well-posedness-vlasov} reduces to solving the fixed-point problem \eqref{eq:fixed_point}.
\begin{proof}[Proof of Theorem \ref{theo:well-posedness-vlasov}]
For any initial measure $\mu_*\in \mathcal{P}_{1,\nu}(I^2 \times \R^d \times \R^d \times \R)$  satisfying for almost every $\xi,\zeta$, $\text{supp}(\mu_*^{\xi,\zeta}) \subset B(0,R_X) \times B(0,R_Y) \times [-R_M,R_M]$, and any curve $\mu \in \mathcal{C}([0,T],  \mathcal{P}_{1,\nu}(I^2 \times \mathbb{R}^d \times \mathbb{R}^d \times \mathbb{R}))$, we define $\mathcal{F}[\mu] \in \mathcal{C}([0,T],  \mathcal{P}_{1,\nu}(I^2 \times \mathbb{R}^d \times \mathbb{R}^d \times \mathbb{R}))$ by 
$$ \mathcal{F}[\mu]_t^{\xi,\zeta} := \mathcal{T}_t^{\xi,\zeta}[\mu] \# \mu_*$$

Thus, for $\mu, \bar \mu  \in \mathcal{C}([0,T],  \mathcal{P}_{1,\nu}(I^2 \times \mathbb{R}^d \times \mathbb{R}^d \times \mathbb{R}))$, we have
$$ \begin{array}{l}
d_{\rm BL}(\mathcal{F}[\mu]_t^{\xi,\zeta},\mathcal{F}[\bar \mu]_t^{\xi,\zeta}) \\
\dsp \leq \sup_{\Vert \varphi\Vert_{\rm BL}\leq 1}\int_{\R^d \times \R^d \times \R}\varphi(x,y,w)\left(d\mathcal{F}[\mu]_t^{\xi,\zeta}(x,y,w)-d\mathcal{F}[\bar \mu]_t^{\xi,\zeta}) (x,y,w)\right)\\
\dsp = \sup_{\Vert \varphi\Vert_{\rm BL}\leq 1}\int_{\R^d \times \R^d \times \R}\varphi(\mathcal{T}^{\xi,\zeta}_t[\mu](x,y,w)) - \varphi(\mathcal{T}^{\xi,\zeta}_t[\bar \mu](x,y,w)) d\mu_*^{\xi,\zeta}(x,y,w)\\
\dsp \leq \sup_{\Vert \varphi\Vert_{\rm BL}\leq 1} \int_{\R^d \times \R^d \times \R} \left| Z[\mu](t,\xi,\zeta,x,y,w) - Z[\bar \mu](t,\xi,\zeta,x,y,w)\right|  d\mu_*^{\xi,\zeta}(x,y,w) 
\end{array}$$
Thus, we have
$$\begin{array}{l}
 \left| Z[\mu](t,\xi,\zeta,x,y,w) - Z[\bar \mu](t,\xi,\zeta,x,y,w)\right|  \\
\dsp = \left|  \int_0^t \left( F_\Lambda[\mu_s](\xi,\zeta,Z [\mu](s,\xi,\zeta,x,y,w))  - F_\Lambda[\bar \mu_s](\xi,\zeta,Z [\bar \mu](s,\xi,\zeta,x,y,w))  \right) ds  \right|\\
\dsp \leq \int_0^t \left| F_\Lambda[\mu_s](\xi,\zeta,Z [\mu](s,\xi,\zeta,x,y,w))  - F_\Lambda[ \mu_s](\xi,\zeta,Z [\bar \mu](s,\xi,\zeta,x,y,w)) \right| ds \\
+ \dsp \int_0^t \left| F_\Lambda[\mu_s](\xi,\zeta,Z [\bar \mu](s,\xi,\zeta,x,y,w))  - F_\Lambda[ \bar \mu_s](\xi,\zeta,Z [\bar \mu](s,\xi,\zeta,x,y,w)) \right| ds.
\end{array}$$
For the first term in the last inequality, we use the Lipschitz continuity of $F_\Lambda$ with respect to $(x,y,w)$ and get 
\begin{multline}
\int_0^t \left| F_\Lambda[\mu_s](\xi,\zeta,Z [\mu](s,\xi,\zeta,x,y,w))  - F_\Lambda[ \mu_s](\xi,\zeta,Z [\bar \mu](s,\xi,\zeta,x,y,w)) \right| ds  \\ \leq \int_0^t C_1(T)  \left| Z[\mu](s,\xi,\zeta,x,y,w) - Z[\bar \mu](s,\xi,\zeta,x,y,w)\right| ds.
\end{multline}
Regarding the second term, we get 
$$\begin{array}{l} 
 \dsp \int_0^t \left| F_\Lambda[\mu_s](\xi,\zeta,Z [\bar \mu](s,\xi,\zeta,x,y,w))  - F_\Lambda[ \bar \mu_s](\xi,\zeta,Z [\bar \mu](s,\xi,\zeta,x,y,w)) \right| ds \\
 = \displaystyle \int_0^t \Bigg( \left| \intt \tw \phi(s,X [\bar \mu](s,\xi,\zeta,x,y,w),\ty)  \left(\mu_s^{\xi,\tz} (d\tx,d\ty, d\tw) - \bar \mu_s^{\xi,\tz} (d\tx,d\ty, d\tw) \right) d\tz\right|  \\ 
 ~~~~~~~~~~ + \displaystyle \left|  \intt \tw \phi(s,Y [\bar \mu](s,\xi,\zeta,x,y,w),\ty) \left( \mu_s^{\zeta,\tz} (d\tx,d\ty, d\tw) - \bar \mu_s^{\zeta,\tz} (d\tx,d\ty, d\tw) \right) d\tz  \right| \\ 
 ~~~~~~~~~~ + \displaystyle \left|  \inttt \Lambda( \xi, \zeta,Z [\bar \mu](s,\xi,\zeta,x,y,w), \tx, \ty, \tw) \left(\mu_s^\xztt(d\tx,d\ty,d\tw) - \bar \mu_s^\xztt(d\tx,d\ty,d\tw) \right) d \txi d \tz \right| \Bigg) ds\\
 \end{array}$$
By Proposition~\ref{prop:compacite}, for all $s\in[0,T]$, for a.e. $\xi,\zeta \in I$,  the measures $\mu_s^{\xi,\zeta}$ and
$\bar\mu_s^{\xi,\zeta}$ are supported in $B_T^{\phi,\Lambda}(R_X,R_Y,R_M)$, hence
$\chi_T\equiv 1$ on these supports and we can rewrite the above term as 
 $$\begin{array}{l} 
 \dsp \int_0^t \left| F_\Lambda[\mu_s](\xi,\zeta,Z [\bar \mu](s,\xi,\zeta,x,y,w))  - F_\Lambda[ \bar \mu_s](\xi,\zeta,Z [\bar \mu](s,\xi,\zeta,x,y,w)) \right| ds \\
 = \displaystyle \int_0^t \Bigg( \left| \intt \tw \chi_T(\tx,\ty, \tw) \phi(s,X [\bar \mu](s,\xi,\zeta,x,y,w),\ty)  \left(\mu_s^{\xi,\tz} (d\tx,d\ty, d\tw) - \bar \mu_s^{\xi,\tz} (d\tx,d\ty, d\tw) \right) d\tz\right|  \\ 
 ~~~~~~ + \displaystyle \left|  \intt \tw \chi_T(\tx,\ty, \tw) \phi(s,Y [\bar \mu](s,\xi,\zeta,x,y,w),\ty) \left( \mu_s^{\zeta,\tz} (d\tx,d\ty, d\tw) - \bar \mu_s^{\zeta,\tz} (d\tx,d\ty, d\tw) \right) d\tz  \right| \\ 
 ~~~~~~ + \displaystyle \left|  \inttt \chi_T(\tx,\ty, \tw) \Lambda( \xi, \zeta,Z [\bar \mu](s,\xi,\zeta,x,y,w), \tx, \ty, \tw) \left(\mu_s^\xztt(d\tx,d\ty,d\tw) - \bar \mu_s^\xztt(d\tx,d\ty,d\tw) \right) d \txi d \tz \right| \Bigg) ds.
 \end{array}$$
 We introduced $\chi_T$ so that the integrands become globally bounded and Lipschitz, hence admissible as test functions in the definition of  $d_{\rm BL}$. Moreover, by Proposition~\ref{prop:compacite}, we have
\[
Z[\bar\mu](s,\xi,\zeta,x,y,w)\in B_T^{\phi,\Lambda}(R_X,R_Y,R_M)
\subset \tilde B_T^{\phi,\Lambda}(R_X,R_Y,R_M),
\]
which allows us to apply Lemma~\ref{lem:normes_BL} with
$(x,y,w)=Z[\bar\mu](s,\xi,\zeta,x,y,w)$ and obtain
  $$\begin{array}{l} 
 \dsp \int_0^t \left| F_\Lambda[\mu_s](\xi,\zeta,Z [\bar \mu](s,\xi,\zeta,x,y,w))  - F_\Lambda[ \bar \mu_s](\xi,\zeta,Z [\bar \mu](s,\xi,\zeta,x,y,w)) \right| ds \\
 \leq\displaystyle \int_0^t \Bigg( L_3 \left| \intt  \frac{1}{L_3} \tw \chi_T(\tx,\ty, \tw) \phi(s,X [\bar \mu](s,\xi,\zeta,x,y,w),\ty)  \left(\mu_s^{\xi,\tz} (d\tx,d\ty, d\tw) - \bar \mu_s^{\xi,\tz} (d\tx,d\ty, d\tw) \right) d\tz\right|  \\ 
  + \displaystyle L_3\left| \intt \frac{1}{L_3} \tw \chi_T(\tx,\ty, \tw) \phi(s,Y [\bar \mu](s,\xi,\zeta,x,y,w),\ty) \left( \mu_s^{\zeta,\tz} (d\tx,d\ty, d\tw) - \bar \mu_s^{\zeta,\tz} (d\tx,d\ty, d\tw) \right) d\tz  \right| \\ 
 + \displaystyle L_2 \left|  \inttt  \frac{1}{L_2}\chi_T(\tx,\ty, \tw) \Lambda( \xi, \zeta,Z [\bar \mu](s,\xi,\zeta,x,y,w), \tx, \ty, \tw) \left(\mu_s^\xztt(d\tx,d\ty,d\tw) - \bar \mu_s^\xztt(d\tx,d\ty,d\tw) \right) d \txi d \tz \right| \Bigg) ds\\
 \dsp \leq   L_3\int_0^t \left( \int_I  d_{\rm BL}(\mu_s^{\xi, \tz}(\cdot), \bar \mu_s^{\xi, \tz}(\cdot)) d\tz \right) ds+ L_3\int_0^t \left( \int_I  d_{\rm BL}(\mu_s^{\zeta, \tz}(\cdot), \bar \mu_s^{\zeta, \tz}(\cdot)) d\tz \right) ds \\
 ~~~~~~~~~~~~~ ~~~~~~~~~~~~~ ~~~~~~~~~~~~~ ~~~~~~~~~~~~~ ~~~~~~~~~~~~~ ~~~~~~~~~~~~~ + L_2  \int_0^t \left( \int_{I^2}  d_{\rm BL}(\mu_s^{\tilde \xi , \tz}(\cdot), \bar \mu_s^{\tilde \xi , \tz}(\cdot))  d \tilde \xi d\tz  \right) ds %\\
% \dsp \leq ( 2 L_3+ L_2) \int_0^t  \esssup_{(\tilde \xi, \tz) \in I^2}  d_{\rm BL}(\mu_s^{\tilde \xi, \tz}(\cdot), \bar \mu_s^{\tilde \xi, \tz}(\cdot) ds  
 \end{array}$$
 Thus, we have
$$\begin{array}{l}
 \left| Z[\mu](t,\xi,\zeta,x,y,w) - Z[\bar \mu](t,\xi,\zeta,x,y,w)\right|  \\
 \dsp \leq \int_0^t C_1(T)  \left| Z[\mu](s,\xi,\zeta,x,y,w) - Z[\bar \mu](s,\xi,\zeta,x,y,w)\right| ds +  L_3\int_0^t \left( \int_I  d_{\rm BL}(\mu_s^{\xi, \tz}(\cdot), \bar \mu_s^{\xi, \tz}(\cdot)) d\tz \right) ds  \\
 \dsp ~~~~~~~~L_3\int_0^t \left( \int_I  d_{\rm BL}(\mu_s^{\zeta, \tz}(\cdot), \bar \mu_s^{\zeta, \tz}(\cdot)) d\tz \right) ds+ L_2  \int_0^t \left( \int_{I^2}  d_{\rm BL}(\mu_s^{ \tilde \xi , \tz}(\cdot), \bar \mu_s^{\tilde \xi , \tz}(\cdot))  d \tilde \xi d\tz  \right) ds  \\
 \dsp \leq C_2(T) \left( \int_0^t  \left| Z[\mu](s,\xi,\zeta,x,y,w) - Z[\bar \mu](s,\xi,\zeta,x,y,w)\right| ds +  \int_0^t \left( \int_I  d_{\rm BL}(\mu_s^{\xi, \tz}(\cdot), \bar \mu_s^{\xi, \tz}(\cdot)) d\tz \right) ds \right. \\
  \dsp ~~~~~~~~~~~~~~~~~~~~~~~~~~~~~~+ \int_0^t \left( \int_I  d_{\rm BL}(\mu_s^{\zeta, \tz}(\cdot), \bar \mu_s^{\zeta, \tz}(\cdot)) d\tz \right) ds+ \left. \int_0^t \left( \int_{I^2}  d_{\rm BL}(\mu_s^{\tilde \xi , \tz}(\cdot), \bar \mu_s^{\tilde \xi , \tz}(\cdot))  d \tilde \xi d\tz  \right) ds \right)
  \end{array}$$
  with $C_2(T):= \max(C_1(T),  L_3, L_2)$. Then, using Gronwall's lemma, we get
  $$\begin{array}{l}
 \left| Z[\mu](t,\xi,\zeta,x,y,w) - Z[\bar \mu](t,\xi,\zeta,x,y,w)\right|  \\
 \dsp \leq C_2(T) \int_0^t e^{C_2(T) (t-s)}    \left( \int_I  d_{\rm BL}(\mu_s^{\xi, \tz}(\cdot), \bar \mu_s^{\xi, \tz}(\cdot)) d\tz + \int_I  d_{\rm BL}(\mu_s^{\zeta, \tz}(\cdot), \bar \mu_s^{\zeta, \tz}(\cdot)) d\tz +\int_{I^2}  d_{\rm BL}(\mu_s^{\tilde \xi , \tz}(\cdot), \bar \mu_s^{ \tilde \xi , \tz}(\cdot))  d \tilde \xi d\tz  \right) ds.
  \end{array}$$
Thus, integrating over $\R^d \times \R^d \times \R$ against  $\mu_*^{\xi,\zeta}$,  we get that 
 $$ \begin{array}{l}
 d_{\rm BL}(\mathcal{F}[\mu]_t^{\xi,\zeta},\mathcal{F}[\bar \mu]_t^{\xi,\zeta}) \\
 \dsp \leq C_2(T) \int_0^t e^{C_2(T) (t-s)}   \left( \int_I  d_{\rm BL}(\mu_s^{\xi, \tz}(\cdot), \bar \mu_s^{\xi, \tz}(\cdot)) d\tz + \int_I  d_{\rm BL}(\mu_s^{\zeta, \tz}(\cdot), \bar \mu_s^{\zeta, \tz}(\cdot)) d\tz  + \int_{I^2}  d_{\rm BL}(\mu_s^{\tilde \xi , \tz}(\cdot), \bar \mu_s^{\tilde \xi , \tz}(\cdot))  d \tilde \xi d\tz  \right) ds.
  \end{array}$$
  Thus, integrating over $\xi,\zeta$, we have
   $$ \begin{array}{l}
\dsp\int_{I^2}    d_{\rm BL}(\mathcal{F}[\mu]_t^{\xi,\zeta},\mathcal{F}[\bar \mu]_t^{\xi,\zeta})  d\xi d\zeta \\
 \dsp \leq C_2(T) \int_0^t e^{C_2(T) (t-s)} 3 \left(  \int_{I^2}  d_{\rm BL}(\mu_s^{ \xi , \zeta}(\cdot), \bar \mu_s^{  \xi , \zeta}(\cdot))  d \xi d\zeta  \right) ds.
  \end{array}$$
  where we have used Fubini's theorem and a suitable renaming of dummy variables to make appear the multiplicative factor~$3$.

  Therefore, for $\alpha >0$, we have 
     $$ \begin{array}{l}
\dsp e^{-\alpha t}  \dsp\int_{I^2}  d_{\rm BL}(\mathcal{F}[\mu]_t^{\xi,\zeta},\mathcal{F}[\bar \mu]_t^{\xi,\zeta})  d\xi d\zeta \\
 \dsp \leq 3 e^{(C_2(T) - \alpha) t} \int_0^t e^{(\alpha - C_2(T))s}   C_2(T) e^{- \alpha s }  d_{1}( \mu_s, \bar \mu_s) ds\\
 \dsp \leq  3 C_2(T)  \left( \int_0^t e^{(\alpha-C_2(T))(s-t)} ds \right) \, \left( \sup_{\tau \in [0,T]} e^{- \alpha \tau }  d_{1}( \mu_\tau, \bar \mu_\tau) \right)\\
 \dsp \leq \frac{3 C_2(T)}{\alpha - C_2(T)}  d_{1}^\alpha( \mu, \bar \mu)
  \end{array}$$
Thus, if   $\frac{3 C_2(T)}{\alpha - C_2(T)}  <1$ i.e. $\alpha > 4 C_2(T)$, we conclude that it is contracting by taking the supremum on $t \in [0,T]$ on the left-hand side and we can apply the fixed point theorem which concludes the proof. 
   \end{proof}

\section{Stability estimates for the Vlasov-type equation} \label{sec:stability}
In this section, we investigate a structural property of the Vlasov-type equation that plays a central role in our analysis, namely the stability of solutions with respect to both the initial data and the weight dynamics $\Lambda$. Such stability estimates are of particular importance, as the main mean-field limit result established in Section \ref{sec:mfl} will fundamentally rely on them.
\begin{theorem}\label{theo:stability} Let the interaction function $\phi$  satisfy Hypothesis \ref{hyp:phi}. Let the weight dynamics $\Lambda$, $\bar \Lambda$ satisfy Hypotheses \ref{hyp:psi} and \ref{hyp:struct_lambda}.
Let the initial data $\mu_0$, $\bar \mu_0 $  satisfy Hypothesis \ref{hyp:supp_compact}  with $\nu$ the Lebesgue measure. Let $\mu$, $\bar \mu \in \mathcal{C}([0,T], \mathcal{P}_{1,\nu}(I^2 \times \mathbb{R}^d \times \mathbb{R}^d \times \mathbb{R}))$ be the unique distributional solution to \eqref{eq:vlasov-equation}-\eqref{eq:vlasov-force} issued at $\mu_0$ with given $\Lambda$  (respectively $\bar \mu_0$ with given $\bar \Lambda$). Then, we have 
      $$ \begin{array}{l}
    d_1( \mu_t, \bar \mu_t) \\  \leq  e^{4 \,  C_3(T) t}  \Bigg( d_{1}( \mu_0, \bar \mu_0) \\
~+\dsp  C_3(T)   \int_0^t \int_{{I^2} \times \R^d \times \R^d \times \R}   \inttt |\Lambda - \bar \Lambda| ( \xi, \zeta, x,y,w, \tx, \ty, \tw) \bar \mu_s^\xztt(d\tx,d\ty,d\tw)d \txi d \tz \mu_0^{\xi,\zeta}(dx,dy,dw) {d \xi d\zeta} ds \Bigg)
     \end{array}$$
     with $C_3(T) := \max(C_2(T),1)$.
\end{theorem}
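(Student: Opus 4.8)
The plan is to reproduce the contraction estimate from the proof of Theorem~\ref{theo:well-posedness-vlasov}, but now tracking simultaneously the two sources of discrepancy: the initial data $(\mu_0,\bar\mu_0)$ and the weight dynamics $(\Lambda,\bar\Lambda)$. By Proposition~\ref{pro:distributional-solution-Vlasov-fibered}, both solutions are push-forwards along their own characteristics, $\mu_t^{\xi,\zeta}=\mathcal{T}_t^{\xi,\zeta}[\mu]\#\mu_0^{\xi,\zeta}$ and $\bar\mu_t^{\xi,\zeta}=\bar{\mathcal{T}}_t^{\xi,\zeta}[\bar\mu]\#\bar\mu_0^{\xi,\zeta}$, where the barred flow is generated by $F_{\bar\Lambda}[\bar\mu_t]$. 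Moreover, by Proposition~\ref{prop:compacite} both families $(\mu_s^{\xi,\zeta})$ and $(\bar\mu_s^{\xi,\zeta})$ stay supported in $B_T^{\phi,\Lambda}(R_X,R_Y,R_M)$ (up to enlarging the constants so as to cover both sets of Lipschitz/growth constants from Hypothesis~\ref{hyp:psi}), so the truncation $\chi_T$ equals $1$ on all the relevant supports and the bounded-Lipschitz bounds $L_2,L_3$ of Lemma~\ref{lem:normes_BL} remain available.

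First I would split $d_{\rm BL}(\mu_t^{\xi,\zeta},\bar\mu_t^{\xi,\zeta})$ by the triangle inequality into a \emph{flow-discrepancy} term, comparing the two flows on a common base measure, and an \emph{initial-data} term, comparing the two base measures through a single flow. The latter is controlled by $d_{\rm BL}(\mu_0^{\xi,\zeta},\bar\mu_0^{\xi,\zeta})$ times the Lipschitz constant of the flow map, itself bounded by $e^{C_1(T)T}$ via Gronwall applied to the characteristic ODE together with Lemma~\ref{lem:Lip_force}. For the flow-discrepancy term, since a $1$-Lipschitz test function composed with the flow is admissible in the definition of $d_{\rm BL}$, it suffices to estimate the pointwise deviation $|Z[\mu](t,\xi,\zeta,x,y,w)-\bar Z[\bar\mu](t,\xi,\zeta,x,y,w)|$ of the two characteristics issued from the same point.

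The core computation is then a Gronwall estimate on this deviation. Writing $\frac{d}{dt}(Z[\mu]-\bar Z[\bar\mu]) = F_\Lambda[\mu_t](\xi,\zeta,Z[\mu]) - F_{\bar\Lambda}[\bar\mu_t](\xi,\zeta,\bar Z[\bar\mu])$, I would insert the intermediate quantities $F_\Lambda[\mu_t](\xi,\zeta,\bar Z[\bar\mu])$ and $F_\Lambda[\bar\mu_t](\xi,\zeta,\bar Z[\bar\mu])$ to produce three contributions: (i) a Lipschitz-in-state term $\le C_1(T)|Z[\mu]-\bar Z[\bar\mu]|$ from Lemma~\ref{lem:Lip_force}; (ii) a measure-discrepancy term with \emph{the same} dynamics $\Lambda$, treated exactly as in the proof of Theorem~\ref{theo:well-posedness-vlasov} (inserting $\chi_T$, normalizing by $L_2,L_3$, and recognizing bounded-Lipschitz distances), which yields the three integrals of $d_{\rm BL}(\mu_s^{\cdot,\cdot},\bar\mu_s^{\cdot,\cdot})$; and (iii) a dynamics-discrepancy term with the \emph{same} measure $\bar\mu_s$, in which only the weight component of the force differs (the $\phi$-components do not involve $\Lambda$), giving $\inttt|\Lambda-\bar\Lambda|(\xi,\zeta,\bar Z[\bar\mu](s,\cdot),\tx,\ty,\tw)\,\bar\mu_s^{\xztt}(d\tx,d\ty,d\tw)\,d\txi\,d\tz$.

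Finally I would integrate these pointwise inequalities against the base measure in $(x,y,w)$ and over $(\xi,\zeta)\in I^2$, use Fubini and a relabelling of dummy variables to collapse the three measure-discrepancy integrals into $3\,d_1(\mu_s,\bar\mu_s)$ (the same factor $3$ that appears in Theorem~\ref{theo:well-posedness-vlasov}), and recast the dynamics-discrepancy term through the push-forward induced by the flow so that it appears integrated against $\bar\mu_s$ in the auxiliary variables and against the initial measure in the remaining ones, matching the stated integral. Coupling the resulting inequality for the flow-discrepancy with the elementary bound $g(t):=d_1(\mu_t,\bar\mu_t)\le(\text{flow-discrepancy})+(\text{initial-data term})$ closes a single scalar Gronwall inequality whose constant is the sum of the self-term and the three measure terms, i.e. $4\,C_3(T)$ with $C_3(T)=\max(C_2(T),1)$, producing the prefactor $e^{4C_3(T)t}$. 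The main obstacle is precisely this bookkeeping: the force is self-consistent, so the flow deviation and the measure distance are entangled and must be closed together; moreover, the dynamics-discrepancy term is naturally generated along the moving characteristic, and turning it into the clean form of the statement requires the change of variables induced by the flow.
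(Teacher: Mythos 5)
Your proposal reproduces the paper's proof essentially step by step: the same push-forward splitting of $d_{\rm BL}(\mu_t^{\xi,\zeta},\bar\mu_t^{\xi,\zeta})$ into a flow-discrepancy term over the common base measure $\mu_0^{\xi,\zeta}$ and an initial-data term controlled by $\|\varphi\circ Z[\bar\mu](t,\xi,\zeta,\cdot)\|_{\rm BL}\le e^{C_1(T)t}$ (Lemma~\ref{lem:phiZ_lip}); the same three-term insertion ($A_1$: Lipschitz in the state via Lemma~\ref{lem:Lip_force}; $A_2$: measure discrepancy at fixed $\Lambda$, handled with the $\chi_T$, $L_2$, $L_3$ machinery of the well-posedness proof; $A_3$: dynamics discrepancy at fixed $\bar\mu_s$); and the same integration, Fubini/relabelling factor $3$, and double Gronwall argument producing the prefactor $e^{4C_3(T)t}$.

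The one step that does not hold as you describe it is the final ``recasting'' of the dynamics-discrepancy term. What the characteristics actually generate, after integrating against the common base measure, is
\[
\int_0^t\int_{\R^d\times\R^d\times\R}\inttt|\Lambda-\bar\Lambda|\bigl(\xi,\zeta,Z[\bar\mu](s,\xi,\zeta,x,y,w),\tx,\ty,\tw\bigr)\,\bar\mu_s^{\xztt}(d\tx,d\ty,d\tw)\,d\txi\,d\tz\,\mu_0^{\xi,\zeta}(dx,dy,dw)\,ds,
\]
that is, $|\Lambda-\bar\Lambda|$ evaluated \emph{along the barred flow} but integrated against the \emph{unbarred} initial measure. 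The change of variables you invoke would replace $\mu_0^{\xi,\zeta}$ by $Z[\bar\mu](s,\xi,\zeta,\cdot)\#\mu_0^{\xi,\zeta}$, which is a hybrid object: it is neither $\mu_0^{\xi,\zeta}$ (what the stated bound requires) nor $\bar\mu_s^{\xi,\zeta}$, because the barred flow pushes $\bar\mu_0^{\xi,\zeta}$, not $\mu_0^{\xi,\zeta}$, onto $\bar\mu_s^{\xi,\zeta}$. So the recast cannot ``match the stated integral'' as claimed.

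You should know, however, that the paper's own proof is silent at exactly this point: it simply writes the $A_3$ bound with $(x,y,w)$ in place of $Z[\bar\mu](s,\xi,\zeta,x,y,w)$, with no justification, and the statement of Theorem~\ref{theo:stability} inherits that form. Your version, which keeps the flowed coordinates, is in fact the more accurate estimate. Two legitimate ways to reach a closed statement are: either keep the $|\Lambda-\bar\Lambda|$ term evaluated along the flow (and state the theorem accordingly), or perform the initial triangle-inequality split with $\bar\mu_0$ as the common base measure, in which case the identity $Z[\bar\mu](s,\xi,\zeta,\cdot)\#\bar\mu_0^{\xi,\zeta}=\bar\mu_s^{\xi,\zeta}$ applies legitimately and the discrepancy term becomes $|\Lambda-\bar\Lambda|$ integrated against $\bar\mu_s^{\xi,\zeta}\otimes\bar\mu_s^{\xztt}$. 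Either variant serves the only place the theorem is used (Theorem~\ref{theo:main}), where one merely needs uniform domination and pointwise convergence of $|\Lambda_N-\Lambda|$ on a fixed compact set.
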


As a preliminary step towards the proof of Theorem \ref{theo:stability}, we establish the next lemma.
\begin{lemma}\label{lem:phiZ_lip}
For $(\xi, \zeta) \in I^2$, for $\varphi \in \text{BL}(\R^d \times \R^d \times \R)$ with $\|\varphi \Vert_{\rm BL} \leq 1$, we have $$\|\varphi \circ Z[\bar \mu](t,\xi,\zeta, \cdot) \Vert_{\rm BL}  \leq  e^{{C}_1(T) \, t}.$$
\end{lemma}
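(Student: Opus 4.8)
The bounded--Lipschitz norm splits as $\|\varphi\circ Z[\bar\mu](t,\xi,\zeta,\cdot)\|_{\rm BL}=\max\{\|\varphi\circ Z[\bar\mu](t,\xi,\zeta,\cdot)\|_{L^\infty},[\varphi\circ Z[\bar\mu](t,\xi,\zeta,\cdot)]_{\rm Lip}\}$, so I would treat the two pieces separately. The supremum part is immediate: composition cannot increase the range, so $\|\varphi\circ Z[\bar\mu](t,\xi,\zeta,\cdot)\|_{L^\infty}\le\|\varphi\|_{L^\infty}\le 1$. Since $C_1(T)\ge 0$ and $t\ge 0$, this is bounded by $e^{C_1(T)t}$, and the whole statement will therefore follow once the Lipschitz seminorm is controlled by the same quantity. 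Thus the real content is to show that the flow map $(x,y,w)\mapsto Z[\bar\mu](t,\xi,\zeta,x,y,w)$ is itself $e^{C_1(T)t}$-Lipschitz in the initial datum.

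To obtain this Lipschitz bound on the flow, I would fix $(\xi,\zeta)$ and the curve $\bar\mu$ (so that both trajectories solve the \emph{same} characteristic vector field and only the initial conditions differ), take two initial data $(x_1,y_1,w_1),(x_2,y_2,w_2)\in\R^d\times\R^d\times\R$, and write the characteristics \eqref{eq:charac} in integral form. Subtracting the two integral identities gives
\[
\bigl|Z[\bar\mu](t,\xi,\zeta,x_1,y_1,w_1)-Z[\bar\mu](t,\xi,\zeta,x_2,y_2,w_2)\bigr|
\le |(x_1,y_1,w_1)-(x_2,y_2,w_2)|+\int_0^t\bigl|F_\Lambda[\bar\mu_s](\xi,\zeta,Z_1(s))-F_\Lambda[\bar\mu_s](\xi,\zeta,Z_2(s))\bigr|\,ds,
\]
where $Z_i(s):=Z[\bar\mu](s,\xi,\zeta,x_i,y_i,w_i)$. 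Since $\bar\mu$ is the distributional solution issued at $\bar\mu_0$, and compact support (Hypothesis~\ref{hyp:supp_compact}) guarantees the finite second moment required by Lemma~\ref{lem:aprioriestimates}, Lemma~\ref{lem:Lip_force} applies and bounds the integrand by $C_1(T)\,|Z_1(s)-Z_2(s)|$.

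Applying Grönwall's lemma then yields $|Z_1(t)-Z_2(t)|\le e^{C_1(T)t}\,|(x_1,y_1,w_1)-(x_2,y_2,w_2)|$, i.e.\ the flow is $e^{C_1(T)t}$-Lipschitz. Finally, because $[\varphi]_{\rm Lip}\le\|\varphi\|_{\rm BL}\le 1$, the composition inherits this constant:
\[
\bigl|\varphi(Z_1(t))-\varphi(Z_2(t))\bigr|\le[\varphi]_{\rm Lip}\,|Z_1(t)-Z_2(t)|\le e^{C_1(T)t}\,|(x_1,y_1,w_1)-(x_2,y_2,w_2)|,
\]
so $[\varphi\circ Z[\bar\mu](t,\xi,\zeta,\cdot)]_{\rm Lip}\le e^{C_1(T)t}$, and combining with the $L^\infty$ bound closes the argument. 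This proof is essentially routine; there is no serious obstacle beyond keeping $\bar\mu$ and $(\xi,\zeta)$ fixed so that the two characteristics obey an identical vector field, which is exactly what makes the Grönwall comparison between trajectories legitimate.
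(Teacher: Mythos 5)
Your proposal is correct and follows essentially the same route as the paper's own proof: write the characteristics in integral form for two initial data, bound the difference of the force terms via Lemma~\ref{lem:Lip_force} with constant $C_1(T)$, apply Gr\"onwall to get that the flow is $e^{C_1(T)t}$-Lipschitz, and then combine $\|\varphi\|_{L^\infty}\le 1$ and $[\varphi]_{\rm Lip}\le 1$ to transfer this to the composition. Your explicit remark that the compact-support hypothesis feeds Lemma~\ref{lem:aprioriestimates} so that Lemma~\ref{lem:Lip_force} applies is a point the paper leaves implicit, but it is the same argument.
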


\begin{proof}
For $(x_1,y_1,w_1), (x_2,y_2,w_2)  \in \mathbb{R}^d \times \mathbb{R}^d \times \mathbb{R}$, we have
$$\begin{array}{l}
 \left| Z[\bar \mu](t,\xi,\zeta,x_1,y_1,w_1) - Z[\bar \mu](t,\xi,\zeta,x_2,y_2,w_2)\right|  \\
\dsp = \left|  \left( \begin{array}{c}
x_1\\ 
y_1\\ 
w_1
 \end{array} \right)  +  \int_0^t  F_{\bar \Lambda}[\bar \mu_s](\xi,\zeta,Z [\bar \mu](s,\xi,\zeta,x_1,y_1,w_1))  ds -  \left( \begin{array}{c}
x_2\\ 
y_2\\ 
w_2
 \end{array} \right) -  \int_0^t  F_{\bar \Lambda}[\bar \mu_s](\xi,\zeta,Z [\bar \mu](s,\xi,\zeta,x_2,y_2,w_2))  ds  \right|\\
\dsp \leq \left(|x_1-x_2| + |y_1-y_2| + |w_1-w_2| \right) +  \int_0^t C_1(T) \left| Z[\bar \mu](s,\xi,\zeta,x_1,y_1,w_1) - Z[\bar \mu](s,\xi,\zeta,x_2,y_2,w_2)\right|  ds
\end{array}$$
applying Lemma \ref{lem:Lip_force}. Thus, using Gronwall's lemma, we deduce that 
$$\begin{array}{l}
\dsp  \left| Z[\bar \mu](t,\xi,\zeta,x_1,y_1,w_1) - Z[\bar \mu](t,\xi,\zeta,x_2,y_2,w_2)\right|   \leq \left(|x_1-x_2| + |y_1-y_2| + |w_1-w_2| \right) e^{C_1(T) \,t}.
\end{array}$$
Since $\|\varphi\|_{\rm BL}\le 1$, we have $\|\varphi\|_{L^\infty}\le 1$ and
$[\varphi]_{\rm Lip}\le 1$. Therefore,
\[
\|\varphi\circ Z[\bar\mu](t,\xi,\zeta,\cdot)\|_{L^\infty}
\le \|\varphi\|_{L^\infty}\le 1,
\]
and for all $(x_1,y_1,w_1),(x_2,y_2,w_2)\in\R^d\times\R^d\times\R$,
\[ \begin{array}{rcl}
|\varphi(Z[\bar\mu](t,\xi,\zeta,x_1,y_1,w_1))
-\varphi(Z[\bar\mu](t,\xi,\zeta,x_2,y_2,w_2))|
& \le  & \left| Z[\bar \mu](t,\xi,\zeta,x_1,y_1,w_1) - Z[\bar \mu](t,\xi,\zeta,x_2,y_2,w_2)\right|\\
&  \leq &  e^{C_1(T)t}\bigl(|x_1-x_2|+|y_1-y_2|+|w_1-w_2|\bigr).
\end{array}
\]
Hence,
\[
\|\varphi\circ Z[\bar\mu](t,\xi,\zeta,\cdot)\|_{\rm BL}
\le e^{C_1(T)t},
\]
which concludes the proof.
\end{proof}

\begin{proof}[Proof of Theorem \ref{theo:stability}]
We pick $\mu$ and $\bar \mu$ two solutions with respective initial data $\mu_0$ and $\bar \mu_0$ and respective functions $\Lambda$ and $\bar \Lambda$ associated to the weight dynamics. We have 
$$ \begin{array}{l}
d_{\rm BL}(\mu_t^{\xi,\zeta}, \bar \mu_t^{\xi,\zeta} ) \\
\dsp \leq \sup_{\Vert \varphi\Vert_{\rm BL}\leq 1}\int_{\R^d \times \R^d \times \R}\varphi(x,y,w)\left(d\mu_t^{\xi,\zeta}(x,y,w)-d\bar \mu_t^{\xi,\zeta}(x,y,w)\right)\\
\dsp = \sup_{\Vert \varphi\Vert_{\rm BL}\leq 1}\int_{\R^d \times \R^d \times \R}\varphi\left(Z[\mu](t,\xi,\zeta,x,y,w)\right) d\mu_0^{\xi,\zeta}(x,y,w) - \int_{\R^d \times \R^d \times \R}\varphi\left (Z[\bar \mu](t,\xi,\zeta,x,y,w)\right) d\bar \mu_0^{\xi,\zeta}(x,y,w)\\
\dsp = \sup_{\Vert \varphi\Vert_{\rm BL}\leq 1}\Bigg(  \int_{\R^d \times \R^d \times \R} \left( \varphi\left(Z[\mu](t,\xi,\zeta,x,y,w)\right)  - \varphi\left(Z[\bar \mu](t,\xi,\zeta,x,y,w)\right) \right)d\mu_0^{\xi,\zeta}(x,y,w) \\
\dsp ~~~~~~~~~~~~~~~~~~~~~~~~~ + \int_{\R^d \times \R^d \times \R}\varphi\left (Z[\bar \mu](t,\xi,\zeta,x,y,w)\right) d(\mu_0^{\xi,\zeta}(x,y,w) - \bar \mu_0^{\xi,\zeta}(x,y,w)) \Bigg)\\
\dsp \leq \sup_{\Vert \varphi\Vert_{\rm BL}\leq 1}  \int_{\R^d \times \R^d \times \R} \left| Z[\mu](t,\xi,\zeta,x,y,w) - Z[\bar \mu](t,\xi,\zeta,x,y,w)\right|  d\mu_0^{\xi,\zeta}(x,y,w) \\
\dsp ~~~~~~~~~~~~~~~~~~~~~~~~~ +\|\varphi \circ Z[\bar \mu](t,\xi, \zeta, \cdot) \Vert_{\rm BL} d_{\rm BL}(\mu_0^{\xi,\zeta}, \bar \mu_0^{\xi,\zeta} )
\end{array}$$
We start by dealing with the first term. We know that 
$$\begin{array}{l}
 \left| Z[ \mu](t,\xi,\zeta,x,y,w) - Z[\bar \mu](t,\xi,\zeta,x,y,w)\right|  \\
\dsp = \left|   \int_0^t  F_{\Lambda}[ \mu_s](\xi,\zeta,Z [\mu](s,\xi,\zeta,x,y,w))  ds -  \int_0^t  F_{\bar \Lambda}[\bar \mu_s](\xi,\zeta,Z [\bar \mu](s,\xi,\zeta,x,y,w))  ds  \right|\\
\dsp \leq \left|   \int_0^t  F_{\Lambda}[ \mu_s](\xi,\zeta,Z [\mu](s,\xi,\zeta,x,y,w))  ds -  \int_0^t  F_{\Lambda}[\mu_s](\xi,\zeta,Z [\bar \mu](s,\xi,\zeta,x,y,w))  ds  \right|\\
\dsp +  \left|   \int_0^t  F_{\Lambda}[ \mu_s](\xi,\zeta,Z [\bar \mu](s,\xi,\zeta,x,y,w))  ds -  \int_0^t  F_{\Lambda}[\bar \mu_s](\xi,\zeta,Z [\bar \mu](s,\xi,\zeta,x,y,w))  ds  \right|\\
\dsp +  \left|   \int_0^t  F_{\Lambda}[ \bar \mu_s](\xi,\zeta,Z [\bar \mu](s,\xi,\zeta,x,y,w))  ds -  \int_0^t  F_{\bar \Lambda}[\bar \mu_s](\xi,\zeta,Z [\bar \mu](s,\xi,\zeta,x,y,w))  ds  \right| =: A_1 +A_2 +A_3.
\end{array}$$
Using Lemma \ref{lem:Lip_force}, we can bound the first term as follows
$$\begin{array}{l}
|A_1| \leq\dsp  \int_0^t C_1(T)  \left| Z[ \mu](s,\xi,\zeta,x,y,w) - Z[\bar \mu](s,\xi,\zeta,x,y,w)\right| ds.
\end{array}$$
Besides, we already have bounded $|A_2|$ in the proof of Propositon \ref{theo:well-posedness-vlasov} and obtained
$$\begin{array}{l}
|A_2| \dsp  \dsp \leq   L_3\int_0^t \left( \int_I  d_{\rm BL}(\mu_s^{\xi, \tz}(\cdot), \bar \mu_s^{\xi, \tz}(\cdot)) d\tz \right) ds + L_3\int_0^t \left( \int_I  d_{\rm BL}(\mu_s^{\zeta, \tz}(\cdot), \bar \mu_s^{\zeta, \tz}(\cdot)) d\tz \right) ds \\
~~~~~~~~~~~~~~~~~~~~~~~~~~~~~~ + L_2  \int_0^t \left( \int_{I^2}  d_{\rm BL}(\mu_s^{\tilde \xi , \tz}(\cdot), \bar \mu_s^{\tilde \xi , \tz}(\cdot))  d \tilde \xi d\tz  \right) ds.
\end{array}$$ 
Therefore, we deduce that 
$$\begin{array}{l}
 \left| Z[ \mu](t,\xi,\zeta,x,y,w) - Z[\bar \mu](t,\xi,\zeta,x,y,w)\right|\\   
 \displaystyle \leq ~\int_0^t C_1(T)  \left| Z[ \mu](s,\xi,\zeta,x,y,w) - Z[\bar \mu](s,\xi,\zeta,x,y,w)\right| ds\\
\displaystyle ~+   L_3\int_0^t \left( \int_I  d_{\rm BL}(\mu_s^{\xi, \tz}(\cdot), \bar \mu_s^{\xi, \tz}(\cdot)) d\tz \right) ds + L_3\int_0^t \left( \int_I  d_{\rm BL}(\mu_s^{\zeta, \tz}(\cdot), \bar \mu_s^{\zeta, \tz}(\cdot)) d\tz \right) ds  \\
~~~~~~~~~~~~~~~~~~~~~~~~~\dsp + L_2  \int_0^t \left( \int_{I^2}  d_{\rm BL}(\mu_s^{\tilde \xi , \tz}(\cdot), \bar \mu_s^{\tilde \xi , \tz}(\cdot))  d \tilde \xi d\tz  \right) ds\\
~+ \displaystyle \int_0^t  \inttt |\Lambda - \bar \Lambda| ( \xi, \zeta, x,y,w, \tx, \ty, \tw) \bar \mu_s^\xztt(d\tx,d\ty,d\tw)d \txi d \tz ds \\
\dsp \leq  C_2(T)  \int_0^t \Bigg( \left| Z[ \mu](s,\xi,\zeta,x,y,w) - Z[\bar \mu](s,\xi,\zeta,x,y,w)\right|   + \int_I  d_{\rm BL}(\mu_s^{\xi, \tz}(\cdot), \bar \mu_s^{\xi, \tz}(\cdot)) d\tz  \\
~~~~~~~~~~~~~~~~~~~~~~~~~\dsp +  \int_I  d_{\rm BL}(\mu_s^{\zeta, \tz}(\cdot), \bar \mu_s^{\zeta, \tz}(\cdot)) d\tz   +  \int_{I^2}  d_{\rm BL}(\mu_s^{\tilde \xi , \tz}(\cdot), \bar \mu_s^{\tilde \xi , \tz}(\cdot))  d \tilde \xi d\tz  \Bigg) ds\\
~+ \displaystyle \int_0^t  \inttt | \Lambda - \bar \Lambda|( \xi, \zeta, x,y,w, \tx, \ty, \tw) \bar \mu_s^\xztt(d\tx,d\ty,d\tw)d \txi d \tz ds.
 \end{array}$$
 Thus, integrating over $\R^d \times \R^d \times \R$ against  $\mu_0^{\xi,\zeta}$,  we get
 $$\begin{array}{l}
 \dsp \int_{\R^d \times \R^d \times \R}  \left| Z[ \mu](t,\xi,\zeta,x,y,w) - Z[\bar \mu](t,\xi,\zeta,x,y,w)\right| \mu_0^{\xi,\zeta}(dx,dy,dw)\\
 \dsp \leq  C_2(T)  \int_0^t  \int_{\R^d \times \R^d \times \R}  \left| Z[ \mu](s,\xi,\zeta,x,y,w) - Z[\bar \mu](s,\xi,\zeta,x,y,w)\right| \mu_0^{\xi,\zeta}(dx,dy,dw)\\
~\dsp +    C_2(T)  \int_0^t   \left(\int_I  d_{\rm BL}(\mu_s^{\xi, \tz}(\cdot), \bar \mu_s^{\xi, \tz}(\cdot)) d\tz   + \int_I  d_{\rm BL}(\mu_s^{\zeta, \tz}(\cdot), \bar \mu_s^{\zeta, \tz}(\cdot)) d\tz +  \int_{I^2}  d_{\rm BL}(\mu_s^{\tilde \xi , \tz}(\cdot), \bar \mu_s^{\tilde \xi , \tz}(\cdot)) d \tilde \xi d\tz \right)  ds\\
~+ \displaystyle  \int_0^t \int_{\R^d \times \R^d \times \R}   \inttt | \Lambda - \bar \Lambda|( \xi, \zeta, x,y,w, \tx, \ty, \tw) \bar \mu_s^\xztt(d\tx,d\ty,d\tw)d \txi d \tz \mu_0^{\xi,\zeta}(dx,dy,dw) ds.
 \end{array}$$
 Thus, applying Gronwall's lemma, we get 
  $$\begin{array}{l}
 \dsp \int_{\R^d \times \R^d \times \R}  \left| Z[ \mu](t,\xi,\zeta,x,y,w) - Z[\bar \mu](t,\xi,\zeta,x,y,w)\right| \mu_0^{\xi,\zeta}(dx,dy,dw)\\
\leq \dsp  C_3(T)  \int_0^t e^{C_3(T) (t-s)}   \Bigg(\int_I  d_{\rm BL}(\mu_s^{\xi, \tz}(\cdot), \bar \mu_s^{\xi, \tz}(\cdot)) d\tz   + \int_I  d_{\rm BL}(\mu_s^{\zeta, \tz}(\cdot), \bar \mu_s^{\zeta, \tz}(\cdot)) d\tz  +  \int_{I^2}  d_{\rm BL}(\mu_s^{ \tilde \xi , \tz}(\cdot), \bar \mu_s^{ \tilde \xi , \tz}(\cdot)) d \tilde \xi d\tz  ds\\
~+ \displaystyle  \int_{\R^d \times \R^d \times \R}   \inttt |\Lambda - \bar \Lambda|( \xi, \zeta, x,y,w, \tx, \ty, \tw) \bar \mu_s^\xztt(d\tx,d\ty,d\tw)d \txi d \tz \mu_0^{\xi,\zeta}(dx,dy,dw) \Bigg) ds
  \end{array}$$
  with $C_3(T) = \max(C_2(T),1)$. From that, we deduce that 
  $$ \begin{array}{l}
d_{\rm BL}(\mu_t^{\xi,\zeta}, \bar \mu_t^{\xi,\zeta} ) \\
\dsp \leq \|\varphi \circ  Z[\bar \mu](t,\xi,\zeta,\cdot) \Vert_{\rm BL} d_{\rm BL}(\mu_0^{\xi,\zeta}, \bar \mu_0^{\xi,\zeta} )\\
~ + \dsp  C_3(T)  \int_0^t e^{C_3(T) (t-s)}   \Bigg(\int_I  d_{\rm BL}(\mu_s^{\xi, \tz}(\cdot), \bar \mu_s^{\xi, \tz}(\cdot)) d\tz  + \int_I  d_{\rm BL}(\mu_s^{\zeta, \tz}(\cdot), \bar \mu_s^{\zeta, \tz}(\cdot)) d\tz   +  \int_{I^2}  d_{\rm BL}(\mu_s^{\tilde \xi , \tz}(\cdot), \bar \mu_s^{\tilde \xi , \tz}(\cdot)) d \tilde \xi d\tz  ds\\
~+ \displaystyle  \int_{\R^d \times \R^d \times \R}   \inttt | \Lambda - \bar \Lambda|( \xi, \zeta, x,y,w, \tx, \ty, \tw) \bar \mu_s^\xztt(d\tx,d\ty,d\tw)d \txi d \tz \mu_0^{\xi,\zeta}(dx,dy,dw) \Bigg) ds\\
\dsp \leq e^{C_1(T)t}d_{\rm BL}(\mu_0^{\xi,\zeta}, \bar \mu_0^{\xi,\zeta} )\\
~ + \dsp  C_3(T)  \int_0^t e^{C_3(T) (t-s)}   \Bigg(\int_I  d_{\rm BL}(\mu_s^{\xi, \tz}(\cdot), \bar \mu_s^{\xi, \tz}(\cdot)) d\tz  + \int_I  d_{\rm BL}(\mu_s^{\zeta, \tz}(\cdot), \bar \mu_s^{\zeta, \tz}(\cdot)) d\tz   +  \int_{I^2}  d_{\rm BL}(\mu_s^{ \tilde \xi , \tz}(\cdot), \bar \mu_s^{\tilde \xi , \tz}(\cdot)) d \tilde \xi d\tz  ds\\
~+ \displaystyle  \int_{\R^d \times \R^d \times \R}   \inttt |\Lambda - \bar \Lambda|( \xi, \zeta, x,y,w, \tx, \ty, \tw) \bar \mu_s^\xztt(d\tx,d\ty,d\tw)d \txi d \tz \mu_0^{\xi,\zeta}(dx,dy,dw) \Bigg) ds
  \end{array}$$
  using Lemma \ref{lem:phiZ_lip}. We integrate in $\xi,\zeta$ and get
    $$ \begin{array}{l}
    d_{1}( \mu_t, \bar \mu_t)\\ 
   \leq\dsp   e^{C_3(T)t}   d_{1}( \mu_0, \bar \mu_0) + \dsp 3 \,  C_3(T)  \int_0^t e^{C_3(T) (t-s)}     d_{1}( \mu_s, \bar \mu_s)\\
+ \dsp   C_3(T)  \int_0^t e^{C_3(T) (t-s)}   \int_{{I^2} \times \R^d \times \R^d \times \R}   \inttt |\Lambda - \bar \Lambda|( \xi, \zeta, x,y,w, \tx, \ty, \tw) \bar \mu_s^\xztt(d\tx,d\ty,d\tw)d \txi d \tz \mu_0^{\xi,\zeta}(dx,dy,dw) \\
~~~~~~~~~~~~~~~~~~ ~~~~~~~~~ ~~~~~~~~~~~~~ ~~~~~~~~~~~~~ ~~~~~~~~~~~~~ ~~~~~~~~~ ~~~~~~~~~ ~~~~~~~~~ ~~~~~~~~~~~~~ ~~~~~~~~~~~~~ ~~~~~~~~~~~~~ ~ { d \xi d \zeta} ds
     \end{array}$$
     since $C_3(T)\ge C_1(T)$.
     We multiply by $ e^{- C_3(T) t}$ and apply Gronwall's lemma to obtain
         $$ \begin{array}{l}
e^{- C_3(T) t}   d_{1}( \mu_t, \bar \mu_t) \\  \leq  e^{3 \,  C_3(T) t} \Bigg(d_{1}( \mu_0, \bar \mu_0)\\
~+\dsp  C_3(T)   \int_0^t \int_{{I^2} \times \R^d \times \R^d \times \R}   \inttt | \Lambda - \bar \Lambda|( \xi, \zeta, x,y,w, \tx, \ty, \tw) \bar \mu_s^\xztt(d\tx,d\ty,d\tw)d \txi d \tz \mu_0^{\xi,\zeta}(dx,dy,dw)  {d \xi d \zeta}  ds \Bigg)
     \end{array}$$
     from which we deduce 
      $$ \begin{array}{l}
    d_{1}( \mu_t, \bar \mu_t) \\  \leq  e^{4 \,  C_3(T) t} \Bigg(d_{1}( \mu_0, \bar \mu_0) \\
~+\dsp  C_3(T)   \int_0^t \int_{{I^2}  \times \R^d \times \R^d \times \R}   \inttt | \Lambda - \bar \Lambda|( \xi, \zeta, x,y,w, \tx, \ty, \tw) \bar \mu_s^\xztt(d\tx,d\ty,d\tw)d \txi d \tz \mu_0^{\xi,\zeta}(dx,dy,dw) { d \xi d \zeta} ds \Bigg)
     \end{array}$$
\end{proof}

\section{Propagation of independance} \label{sec:independance}
\subsection{An intermediate particle system}

The main objective of this work is to rigorously establish the mean-field limit of the particle system \eqref{eq:particle_syst_restrictive} towards the Vlasov-type equation \eqref{eq:vlasov-equation}-\eqref{eq:vlasov-force}. To this end, we adopt an approach inspired by \cite{Snitzman} which consists in introducing an auxiliary system expected to approximate the original $N$-particle dynamics as  $N$ goes to $\infty$ under suitable assumptions. In this framework, we make use of the notion of propagation of independence introduced in \cite{JabinPoyatoSoler21}. Indeed, the auxiliary system is initialized with the family of independent variables  $\{X^{N,0}, W^{N,0}\}$ where $X^{N,0} := (X_i^{N,0})_{i \in \{1, \dots, N\}}$ and $W^{N,0} := (W_{ij}^{N,0})_{i,j \in \{1, \dots, N\}}$. 
In order to effectively propagate this independence property, however, we shall impose an additional restrictive assumption on $\Lambda$ namely that it no longer depends on the spatial variable $x$.
Thus, we will adopt this assumption for the rest of this section.

\begin{rem}
Although restrictive, this assumption remains meaningful from a modeling perspective. In particular, it naturally encompasses the study of interactions on online social platforms in connection with moderation mechanisms such as deplatforming, where users are banned once they start expressing extremist opinions (see \cite{thomas2023disrupting}), or shadow banning, which consists in reducing the visibility of certain extremist users (see \cite{chen2024shadow}). In the latter case, the reduction applies uniformly to all, or to a subset of, other users, in a way that is independent of their own opinions and depends solely on that of the shadow-banned individual. The model considered in this work, even under this restrictive assumption, is therefore sufficiently rich to capture these scenarios, which are both complex and of significant interest.
\end{rem}
The particle system to be studied is now rewritten under this restrictive assumption.
 \begin{align} \label{eq:particle_syst_restrictive}
\left\{ \begin{aligned}
 \frac{d}{dt} X_i^N & =  \frac{1}{N} \sum_{j=1}^N W_{ij}^N \phi(t,X_i^N,X_j^N)\\
  \frac{d}{dt}  W_{ij}^N  & =  \frac{1}{N^2} \sum_{\substack{i_1=1 \\ i _1\neq i}}^N  \sum_{\substack{j_1=1 \\ j _1\neq i}}^N  \Lambda_{ij}(X_j^N,W_{ij}^N,X_{i_1}^N,X_{j_1}^N,W_{{i_1}{j_1}}^N)
 \end{aligned} \right.
 \end{align}

\begin{rem}
Because the dependence on the first variable disappears, we must explicitly require $i_1,j_1 \neq i$, since we can no longer rely on Hypothesis~\ref{hyp:struct_lambda} to guarantee the absence of $X_i$ among the remaining variables. 
\end{rem}
We can now introduce the intermediate particle system that will serve as the starting point of our analysis and whose dynamics are governed by the following equations:
 \begin{align} \label{eq:intermediate1}
\left\{ \begin{aligned}
 \frac{d}{dt} \bar X_i^N & =  \frac{1}{N} \sum_{j=1}^N \E_i \croch{\bar W_{ij}^N \phi(t,\bar X_i^N,\bar X_j^N)}\\
  \frac{d}{dt}  \bar W_{ij}^N  & =  \frac{1}{N^2} \sum_{\substack{i_1=1 \\ i_1\neq i}}^N \sum_{\substack{j_1=1 \\ j _1\neq i}}^N\E_{i,j} \croch{\Lambda_{ij}(\bar X_j^N,\bar W_{ij}^N,\bar X_{i_1}^N,\bar X_{j_1}^N,\bar W_{{i_1}{j_1}}^N)}
 \end{aligned} \right.
 \end{align}
 where $\E_i := \E \croch{\cdot | \F_i}$ and  $\E_{i,j} := \E \croch{\cdot | \F_{i,j}} $  denote the expectations conditioned respectively to the natural filtrations $\dsp \F_i(t) := \sigma \pare{\{\bar X_i^N (s) \}_{s \leq t}}$ and  $\F_{i,j}(t) := \sigma \pare{\{(\bar X_j^N (s),\bar W_{ij}^N(s) )\}_{s \leq t}}$.\\

 Let us denote $\bar \lambda_t^{N,i,j} (\cdot, \cdot, \cdot) := \text{law} \left((\bxi(t), \bxj(t), \bwij(t))\right)$. 
For the sake of simplicity of presentation, we assume that the law
$\bar\lambda_t^{N,i,j}$ admits a density with respect to the Lebesgue measure on $\R^d \times \R^d \times \R$.
The same arguments can be carried out in the general measure-valued setting,
at the price of replacing densities by the corresponding marginal measures.
  We know that the law of $(\bxj(t), \bwij(t))$ is given by $$\text{law} \left((\bxj(t), \bwij(t))\right) = \int_{\R^d}  \lij ( \tx, \cdot, \cdot) d\tx. $$
Besides, by independence, the law of $(\bxj, \bwij)$ knowing $\bxi$ is the law of $(\bxj, \bwij).$ Therefore, we have 
\begin{equation*}
\E_i\left[ \bwij \phi(t, \bxi, \bxj)\right] = \int_{\R^d \times \R}  \tw \phi (t, \bxi, \ty) \int_{\R^d} \lij ( \tx, \ty, \tw) d \tx d\ty d\tw.
\end{equation*}

Similarly, by independence of $(X_j,W_{ij})$ from $(X_{i_1},X_{j_1},W_{i_1j_1})$ for $j,i_1,j_1 \neq i$, $$\E_{i,j} \croch{\Lambda_{ij}(\bar X_j^N,\bar W_{ij}^N,\bar X_{i_1}^N,\bar X_{j_1}^N,\bar W_{{i_1}{j_1}}^N)} = \int_{\R^d \times \R^d \times \R}\Lambda_{ij}(\bar X_j^N,\bar W_{ij}^N, \tx, \ty, \tw) \lijun(\tx, \ty, \tw) d \tx d\ty d\tw.$$ 
Thus, system \eqref{eq:intermediate1} can rewrite 
 \begin{align} \label{eq:intermediate2}
\left\{ \begin{aligned}
 \frac{d}{dt} \bar X_i^N & =  \frac{1}{N} \sum_{j=1}^N \int_{\R^d \times \R}  \tw \phi (t, \bxi, \ty) \int_{\R^d} \lij ( \tx, \ty, \tw) d \tx d\ty d\tw\\
  \frac{d}{dt}  \bar W_{ij}^N  & =  \frac{1}{N^2} \sum_{\substack{i_1=1 \\ i_1\neq i}}^N \sum_{\substack{j_1=1 \\ j _1\neq i}}^N\ \int_{\R^d \times \R^d \times \R}\ {\Lambda_{ij}(\bar X_j^N,\bar W_{ij}^N, \tx, \ty, \tw)}  \lijun(\tx, \ty, \tw) d \tx d\ty d\tw.
 \end{aligned} \right.
 \end{align}

\subsection{Scaling assumptions for the graph}

In what follows, we focus on the way the graph is initialized. We adopt the following scaling:
\begin{hyp} \label{hyp:graph_initial}
We suppose that 
\begin{equation}
\sup_{i,j} |W_{ij}^{N,0}| = O(1).
\end{equation}
\end{hyp} 

\begin{rem}
Under this hypothesis, the assumptions on the initial weights satisfy the ones of \cite{JabinPoyatoSoler21} in the non-adaptive setting, where propagation of independence has been established.
\end{rem}

\begin{rem}We notice that the all-to-all connection, i.e. $W_{ij}^{N,0}=1$ for all $i,j$ fits the framework. 
\end{rem}
We initialize the graph of the intermediate particle systems with the same initial data, i.e. $\bar W_{ij}^{N,0} = W_{ij}^{N,0}$.

The well-posedness of both the initial and the intermediate particle systems is stated below.
\begin{lemma}[Well-posedness of the initial and intermediate particle systems]
\label{lem:wellposednessintermediateparticlesystem}
Assume that the interaction function $\phi$ satisfies Hypothesis~\ref{hyp:phi}
and that the weight dynamics satisfy Hypotheses~\ref{hyp:psi} and~\ref{hyp:struct_lambda}.
Let the particle systems \eqref{eq:particle_syst_restrictive} and
\eqref{eq:intermediate1} be initialized with random variables
$(X^{N,0},W^{N,0})$ such that the family
$\{X_i^{N,0},\, W_{ij}^{N,0} : 1\le i,j\le N\}$ is independent,
$\E|X_i^{N,0}|<+\infty$, and $W^{N,0}$ satisfies
Hypothesis~\ref{hyp:graph_initial}.
Then both systems admit a unique solution on $[0,T]$, pathwise and in law,
denoted respectively by
\[
(X^{N},W^{N}) \quad \text{and} \quad (\bar X^{N},\bar W^{N}),
\]
where
\[
X^{N} := (X_i^{N})_{1\le i\le N}, \qquad
W^{N} := (W_{ij}^{N})_{1\le i,j\le N},
\]
and
\[
\bar X^{N} := (\bar X_i^{N})_{1\le i\le N}, \qquad
\bar W^{N} := (\bar W_{ij}^{N})_{1\le i,j\le N}.
\]
\end{lemma}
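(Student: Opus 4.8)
The plan is to treat the two systems separately, since the original system \eqref{eq:particle_syst_restrictive} is, for each realization of the initial data, a genuine finite-dimensional ODE, whereas the intermediate system \eqref{eq:intermediate1} is of McKean--Vlasov type and requires a fixed-point argument at the level of laws.

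\textbf{The original system.} For fixed $N$ and a fixed realization of $(X^{N,0},W^{N,0})$, system \eqref{eq:particle_syst_restrictive} is an ODE in $\R^{Nd}\times\R^{N^2}$. First I would record that its vector field is locally Lipschitz in the state variables, which follows from the Lipschitz bounds on $\phi$ and $\Lambda$ in Hypotheses~\ref{hyp:phi} and~\ref{hyp:psi}, exactly as in the computation of Lemma~\ref{lem:Lip_force2}; local existence and uniqueness then follow from the Cauchy--Lipschitz theorem. To globalize on $[0,T]$, I would establish an a priori bound preventing blow-up. Using $\|\phi\|_\infty\le M_\phi$ in the $X$-equation and the sublinear bound \eqref{eq:psisublin} in the $W$-equation gives, after summing over indices, $|\tfrac{d}{dt}W_{ij}^N|\le C_\Lambda(1+|W_{ij}^N|)$ and $|\tfrac{d}{dt}X_i^N|\le M_\phi\max_j|W_{ij}^N|$, so a Gronwall argument yields the deterministic bound
\[
\sup_{1\le i,j\le N}\ \sup_{t\in[0,T]}|W_{ij}^N(t)|\ \le\ \Bigl(\sup_{i,j}|W_{ij}^{N,0}|+C_\Lambda T\Bigr)e^{C_\Lambda T}=:\bar W_T<\infty,
\]
which is finite by Hypothesis~\ref{hyp:graph_initial}, together with the ensuing bound on $|X_i^N(t)|$ in terms of $\bar W_T$ and $|X_i^{N,0}|$. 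Global pathwise well-posedness follows, and measurability in $\omega$ (hence well-posedness as a stochastic process) is inherited from the continuous dependence of ODE solutions on their initial data, with $\E|X_i^N(t)|\le\E|X_i^{N,0}|+TM_\phi\bar W_T<\infty$.

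\textbf{The intermediate system.} Here I would work with the equivalent mean-field formulation \eqref{eq:intermediate2}, in which the conditional expectations have been replaced, through the independence of the initial data, by integrals against the marginal laws $\bar\lambda_t^{N,i,j}=\mathrm{law}(\bxi,\bxj,\bwij)$. The crucial preliminary observation is that the same Gronwall argument applies verbatim to the $\bwij$-equation of \eqref{eq:intermediate2}, since its right-hand side is still bounded by $C_\Lambda(1+|\bwij|)$ by \eqref{eq:psisublin}; hence the intermediate weights obey the same bound $\bar W_T$, and the weight marginals of every $\bar\lambda_t^{N,i,j}$ are supported in $[-\bar W_T,\bar W_T]$ for all $t\in[0,T]$. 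Given this confinement, I would run a Banach fixed point on the flow of laws: on $\mathcal{C}([0,T];\mathcal{P}_1(\R^d\times\R^d\times\R)^{N^2})$ with the weighted distance $\sum_{i,j}\sup_t e^{-\alpha t}d_{\rm BL}(\lambda_t^{i,j},\tilde\lambda_t^{i,j})$, I would define the map sending a candidate family $(\lambda_t^{i,j})$ to the laws of the solution of \eqref{eq:intermediate2} obtained by freezing these laws in the coefficients. With the laws frozen, \eqref{eq:intermediate2} has a triangular (cascade) structure: each $\bxi$ solves an autonomous ODE depending only on its own state and the given laws, after which each $\bwij$ solves an ODE depending only on $\bxj$, $\bwij$ and the given laws, both uniquely solvable by Cauchy--Lipschitz. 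To obtain a contraction for $\alpha$ large, I would estimate the law-dependence of the coefficients in $d_{\rm BL}$: since $\tw$ ranges in the compact $[-\bar W_T,\bar W_T]$, the effective test functions $\tw\,\phi(t,\cdot,\ty)$ and $\Lambda_{ij}(\cdot,\cdot,\tx,\ty,\tw)$ are bounded and Lipschitz on the relevant support, with constants controlled as in Lemma~\ref{lem:normes_BL}; propagating this through the flow with a Gronwall estimate and absorbing the constants into the factor $e^{-\alpha t}$ yields the contraction, hence a unique fixed point and thus existence and uniqueness in law. Pathwise uniqueness then follows, since once the laws are fixed the dynamics reduce to a Lipschitz ODE.

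\textbf{Main obstacle.} The genuinely delicate point is the McKean--Vlasov coupling of \eqref{eq:intermediate1}: the coefficients depend on the law of the solution through the \emph{unbounded} weight variable $\tw$, so the Lipschitz-in-measure estimate driving the contraction is not available a priori. This is precisely why the deterministic a priori bound $\bar W_T$ on the weights --- itself a consequence of the sublinear growth \eqref{eq:psisublin} and the scaling Hypothesis~\ref{hyp:graph_initial} --- is indispensable: it confines the weight marginals to a fixed compact set and thereby restores the boundedness and Lipschitz continuity of the effective test functions. The remaining bookkeeping, including the growth with $N^2$ of the number of coupled weight equations handled at fixed $N$, is routine.
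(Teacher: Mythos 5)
Your proposal is correct and takes essentially the same route as the paper, which does not write out a proof but, in the remark following the lemma, handles the original system by deterministic ODE well-posedness (deferring to \cite{Throm_2024}) and describes the intermediate system's proof as an adaptation of Sznitman's argument in which, due to non-exchangeability, the McKean dynamics cannot be closed on a single law and one must work with the whole family of laws, including those of the weights. Your two-part argument --- Cauchy--Lipschitz plus the Gronwall a priori bound $\bar W_T$ for the original system, then a Banach fixed point on the $N^2$-indexed family of laws for \eqref{eq:intermediate2}, using the weight confinement and BL-norm estimates in the spirit of Lemma~\ref{lem:normes_BL} to obtain the contraction --- is precisely an execution of that plan.
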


\begin{rem} The proof of the well-posedness of the initial particle system can be found in \cite{Throm_2024}, as our assumptions fall within his framework. The proof for the intermediate particle system is an adaptation of the classical argument developed in \cite{Snitzman}. More precisely, it extends the original setting, which relies on uniform and static weights, to a more general case allowing for 	adaptive non-uniform weights. A preliminary discussion on the transition from the classical framework to the case of static but non-uniform weights can be found in \cite[Section 3]{JabinPoyatoSoler21}. In particular, since the particles are non-exchangeable, the main novelty lies in the fact that, for each $\bar X_i$, the associated McKean SDE cannot be closed in terms of its own law. In our case, a similar difficulty arises, further complicated by the need to take into account the law of the weights as well.
\end{rem}

 From the initial bounds, we can derive additional estimates.
\begin{prop} 
For all $t \geq 0$, consider the solutions to the particle systems \eqref{eq:particle_syst_restrictive} and \eqref{eq:intermediate1} introduced in Lemma \ref{lem:wellposednessintermediateparticlesystem}, then we have
\begin{equation} \label{eq:bound_weights1}
\sup_{i,j} |W_{ij}^{N}(t)| \leq (C + C_\Lambda t)  \exp(C_\Lambda t)
\end{equation}
and
\begin{equation} \label{eq:bound_weights2}
\sup_{i,j} |\bar W_{ij}^{N}(t)| \leq (C + C_\Lambda t ) \exp(C_\Lambda t)
\end{equation}
with $C$ a positive number. 
\end{prop}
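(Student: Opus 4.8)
The plan is to reduce both bounds to a single scalar differential inequality of the form $\frac{d}{dt}|W_{ij}^N|\le C_\Lambda(1+|W_{ij}^N|)$, valid uniformly in $(i,j)$, and then to close the estimate by taking a supremum and applying Gronwall's lemma. The key preliminary observation concerns the kernel $\Lambda_{ij}$ itself: since it is defined as the $N^2$-rescaled average of $\Lambda$ over the cell $I_i^N\times I_j^N$, whose Lebesgue measure is exactly $N^{-2}$, the prefactor $N^2$ cancels and the sublinear bound \eqref{eq:psisublin} of Hypothesis~\ref{hyp:psi} transfers verbatim, giving $|\Lambda_{ij}(X_j^N,W_{ij}^N,\tx,\ty,\tw)|\le C_\Lambda(1+|W_{ij}^N|)$, the relevant weight variable being the second argument $W_{ij}^N$.

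Inserting this into the weight equation of \eqref{eq:particle_syst_restrictive}, I would note that the double sum runs over the $(N-1)^2\le N^2$ indices with $i_1,j_1\neq i$, so the factor $N^{-2}$ in front absorbs the number of summands and yields the pointwise bound $\bigl|\frac{d}{dt}W_{ij}^N\bigr|\le C_\Lambda(1+|W_{ij}^N|)$. Integrating in time, using $|W_{ij}^N(t)|\le|W_{ij}^{N,0}|+\int_0^t\bigl|\frac{d}{dt}W_{ij}^N(s)\bigr|\,ds$, and then taking the supremum over $(i,j)$, I obtain for $M(t):=\sup_{i,j}|W_{ij}^N(t)|$ the integral inequality
\[
M(t)\le M(0)+C_\Lambda t+C_\Lambda\int_0^t M(s)\,ds.
\]
Hypothesis~\ref{hyp:graph_initial} gives $M(0)=\sup_{i,j}|W_{ij}^{N,0}|\le C$ for some constant $C>0$ uniform in $N$; since the forcing term $t\mapsto C+C_\Lambda t$ is nondecreasing, the integral form of Gronwall's lemma produces precisely $M(t)\le(C+C_\Lambda t)\exp(C_\Lambda t)$, which is \eqref{eq:bound_weights1}.

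For the intermediate system \eqref{eq:intermediate2}, the argument is essentially identical once one observes that $\bar W_{ij}^N$ is adapted to the conditioning filtration $\F_{i,j}$, or equivalently that each law $\lijun$ is a probability measure, so that integrating $\Lambda_{ij}(\bar X_j^N,\bar W_{ij}^N,\cdot)$ against $\lijun$ preserves the pointwise bound $C_\Lambda(1+|\bar W_{ij}^N|)$. This yields the pathwise inequality $\bigl|\frac{d}{dt}\bar W_{ij}^N\bigr|\le C_\Lambda(1+|\bar W_{ij}^N|)$, and since the intermediate system is initialized with the same data $\bar W_{ij}^{N,0}=W_{ij}^{N,0}$, the very same Gronwall argument gives \eqref{eq:bound_weights2} with identical constants.

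The only genuinely delicate point is the scaling bookkeeping: one must verify that the $N^2$ hidden in the definition of $\Lambda_{ij}$, the $N^{-2}$ in front of the double sum, and the $(N-1)^2$ summands conspire to produce a bound that is uniform in $N$. Everything else is a direct consequence of the sublinear growth assumption together with Gronwall's lemma. I would also emphasize that, consistently with the restrictive assumption that $\Lambda$ does not depend on the variable $x$, the resulting bound is independent of the opinion variables and hence purely a statement about the weight dynamics.
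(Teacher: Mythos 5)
Your proposal is correct and follows essentially the same route as the paper: integrate the weight equation, use the sublinear bound \eqref{eq:psisublin} (which transfers to $\Lambda_{ij}$ because the $N^2$ rescaling exactly cancels the measure $N^{-2}$ of the cell $I_i^N\times I_j^N$), and close with Gronwall's lemma together with Hypothesis~\ref{hyp:graph_initial} on the initial weights. The only cosmetic difference is that you take the supremum over $(i,j)$ before applying Gronwall while the paper applies Gronwall per index and takes the supremum afterwards, and you spell out the scaling bookkeeping and the probability-measure argument for the intermediate system that the paper compresses into ``a similar reasoning.''
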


\begin{proof}
We integrate the second equation of \eqref{eq:particle_syst_restrictive}  and use Hypothesis \ref{hyp:psi} to  obtain 
$$\left| W_{ij}^{N}(t)\right|  \leq \left| W_{ij}^{N,0}\right| +  C_\Lambda  \int_0^t {\left(1+\left| W_{ij}^{N}(s)\right|\right)} ds$$
which, thanks to Gronwall's lemma, leads to 
$$\left|W_{ij}^{N}(t)\right|  \leq \left( \left| W_{ij}^{N,0}\right| + C_\Lambda t\right) \exp(C_\Lambda t).$$
We conclude to the existence of a constant $C >0$ satisfying \eqref{eq:bound_weights1} using Hypothesis \ref{hyp:graph_initial}. A similar reasoning leads to \eqref{eq:bound_weights2}.
\end{proof}

\subsection{Control of the error}
This subsection is devoted to an explicit quantification of the error arising from the approximation of the original system by the intermediate system.
\begin{lemma}[Error estimate]\label{lem:error-estimate}
Assume that the interaction function satisfies Hypothesis \ref{hyp:phi} and the weight dynamics Hypotheses \ref{hyp:psi} and \ref{hyp:struct_lambda}. 
Let the particle systems \eqref{eq:particle_syst_restrictive} and
\eqref{eq:intermediate1} be initialized with random variables
$(X^{N,0},W^{N,0})$ such that the family
$\{X_i^{N,0},\, W_{ij}^{N,0} : 1\le i,j\le N\}$ is independent,
$\E|X_i^{N,0}|<+\infty$, and $W^{N,0}$ satisfies
Hypothesis~\ref{hyp:graph_initial}.
Then, we have 
\begin{equation}\label{eq:error-estimate}
\sup_{i}\mathbb{E}|X_i^N(t)-\bar X_i^N(t)| +  \sup_{i,j}\mathbb{E}|W_{ij}^N(t)-\bar W_{ij}^N(t)|  \leq   \frac{C(T)}{\sqrt{N}}
\end{equation}
with the constant $C(T)$ being defined as $C(T):= \bigg( 2{C_4(T) M_\phi + C_\Lambda (1+ C_4(T)) } \bigg) \exp(C_5(T)T)$   where $C_4(T) := (C + C_\Lambda T)  \exp(C_\Lambda T)$ and $C_5(T) := \max( 2 C_4(T) L_\phi  +3 L\Lambda, M_\phi  +2 L\Lambda)$.
\end{lemma}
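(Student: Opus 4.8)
The plan is to carry out a Sznitman-type coupling between the original system \eqref{eq:particle_syst_restrictive} and the intermediate system \eqref{eq:intermediate1}, both driven by the same independent initial data, and to close the resulting inequality with Gronwall's lemma. Write $e_X(t):=\sup_i\E|X_i^N(t)-\bxi(t)|$ and $e_W(t):=\sup_{i,j}\E|W_{ij}^N(t)-\bwij(t)|$. Passing to the integral formulation and subtracting, the starting point is to decompose each drift difference into a \emph{stability} part, where the same functional is evaluated at the two different configurations, and a \emph{fluctuation} part, where a single (intermediate) configuration is fed once into the empirical average and once into the conditional expectation. For the position equation I would insert the intermediate empirical drift $\frac1N\sum_j \bwij\,\phi(s,\bxi,\bxj)$ between the two drifts; for the weight equation I would insert $\frac{1}{N^2}\sum_{i_1\neq i}\sum_{j_1\neq i}\Lambda_{ij}(\bxj,\bwij,\bar X_{i_1}^N,\bar X_{j_1}^N,\bar W_{i_1 j_1}^N)$.

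The stability parts are handled purely by the Lipschitz bounds in Hypotheses~\ref{hyp:phi} and~\ref{hyp:psi}, together with the uniform weight estimates \eqref{eq:bound_weights1}--\eqref{eq:bound_weights2}, which give $\sup|W_{ij}^N|,\sup|\bwij|\le C_4(T)$. A typical position summand is split as $(W_{ij}^N-\bwij)\phi(s,X_i^N,X_j^N)+\bwij\,[\phi(s,X_i^N,X_j^N)-\phi(s,\bxi,\bxj)]$ and bounded by $M_\phi|W_{ij}^N-\bwij|+C_4(T)L_\phi(|X_i^N-\bxi|+|X_j^N-\bxj|)$, while the weight stability summand, being Lipschitz in its five remaining arguments, produces contributions proportional to $L_\Lambda$ in the differences of $X_j,W_{ij},X_{i_1},X_{j_1},W_{i_1 j_1}$. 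Taking expectations and suprema, and bounding averages by suprema, these assemble into $C_5(T)\int_0^t(e_X(s)+e_W(s))\,ds$, with $C_5(T)=\max(2C_4(T)L_\phi+3L_\Lambda,\,M_\phi+2L_\Lambda)$ collecting the accumulated Lipschitz constants.

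The crux is the fluctuation estimate, which produces the $1/\sqrt N$ rate and rests on the independence propagated by the intermediate dynamics \eqref{eq:intermediate1}. For the position fluctuation $\frac1N\sum_j Y_j$ with $Y_j:=\bwij\,\phi(s,\bxi,\bxj)-\E_i[\bwij\,\phi(s,\bxi,\bxj)]$, each $Y_j$ is centered given $\F_i$, and because the pairs $(\bxj,\bwij)_j$ are conditionally independent given $\F_i$, the $Y_j$ are conditionally orthogonal; Cauchy--Schwarz then yields $\E|\frac1N\sum_j Y_j|\le(\frac1{N^2}\sum_j\E Y_j^2)^{1/2}\le \frac{2C_4(T)M_\phi}{\sqrt N}$, using $|Y_j|\le 2C_4(T)M_\phi$. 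For the weight fluctuation $\frac1{N^2}\sum_{i_1\neq i}\sum_{j_1\neq i}Z_{i_1j_1}$, the summands are centered given $\F_{i,j}$ and bounded through the sublinear bound \eqref{eq:psisublin} by $2C_\Lambda(1+C_4(T))$; here Hypothesis~\ref{hyp:struct_lambda} together with the explicit exclusions $i_1,j_1\neq i$ ensures that two summands indexed by index-disjoint pairs are conditionally independent, hence uncorrelated, so that only $O(N^3)$ of the $N^4$ covariances survive. Expanding the second moment gives $\E|\frac1{N^2}\sum Z_{i_1j_1}|^2=O(1/N)$, again of order $1/\sqrt N$ in $L^1$, contributing a term of order $C_\Lambda(1+C_4(T))/\sqrt N$.

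Collecting the two types of bounds yields $e_X(t)+e_W(t)\le \frac{2C_4(T)M_\phi+C_\Lambda(1+C_4(T))}{\sqrt N}+C_5(T)\int_0^t(e_X(s)+e_W(s))\,ds$, and Gronwall's lemma closes the estimate \eqref{eq:error-estimate} with $C(T)=(2C_4(T)M_\phi+C_\Lambda(1+C_4(T)))\exp(C_5(T)T)$. I expect the main obstacle to be the rigorous justification of the conditional orthogonality in the fluctuation terms: one must verify carefully, from the independence propagated by \eqref{eq:intermediate1}, exactly which pairs of indices share a common particle, since it is the fact that the surviving covariances number only $O(N^3)$ rather than $O(N^4)$ that preserves the $1/\sqrt N$ rate in the weight equation --- this is precisely where Hypothesis~\ref{hyp:struct_lambda} and the exclusions $i_1,j_1\neq i$ enter.
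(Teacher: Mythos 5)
Your proposal is correct and follows the same overall route as the paper: the same Sznitman-type coupling, the same stability/fluctuation splitting of the drift differences, conditional second-moment estimates for the fluctuations, and a Gronwall closure with the same constants $C_4(T)$, $C_5(T)$. There is, however, one genuine difference, and it occurs exactly at the point you flagged as the crux. For the weight fluctuation, the paper asserts that for \emph{every} pair $(i_1,j_1)\neq(i_2,j_2)$ the two summands are conditionally independent given $\F_{i,j}$, so that \emph{all} off-diagonal covariances vanish; this yields the stronger bound $B\le 4C_\Lambda^2(1+C_4(T))^2/N^2$, i.e.\ a weight fluctuation of order $1/N$. That assertion is too strong: when the two auxiliary pairs share an index (say $i_1=i_2$ with $j_1\neq j_2$), both summands involve the \emph{same} random variable $\bar X_{i_1}^N$, so they are not conditionally independent, and Hypothesis~\ref{hyp:struct_lambda} does not annihilate such products --- it only forces a single evaluation of $\Lambda_{ij}$ to vanish when two of its own arguments coincide (e.g.\ $i_1=j_1$, or $i_1$ or $j_1$ equal to $j$). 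Your accounting --- decorrelation only for index-disjoint pairs, $O(N^3)$ surviving covariances each bounded via \eqref{eq:psisublin} and \eqref{eq:bound_weights2}, hence a second moment of order $1/N$ and an $L^1$ bound of order $1/\sqrt N$ --- is the correct one, and it suffices because the final estimate \eqref{eq:error-estimate} only claims the $1/\sqrt N$ rate, which the position fluctuation forces anyway. So at this step your argument is in fact more careful than the paper's own proof; the only cost is a slightly larger numerical constant coming from the count of index-sharing pairs, which is immaterial here since the constant in the lemma is not tracked exactly through the paper's proof either.
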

\begin{proof}
We do the difference between the first equations of \eqref{eq:particle_syst_restrictive} and \eqref{eq:intermediate1} and integrate over time to obtain 
$$\begin{array}{lcl}
\dsp \left| X_i^N - \bar X_i^N \right|(t) &  = & \dsp \left| \int_0^t \frac{1}{N} \sum_{j=1}^N \left( W_{ij}^N(s) \phi(s,X_i^N(s),X_j^N(s))  - \E_i \croch{\bar W_{ij}^N(s) \phi(s,\bar X_i^N(s),\bar X_j^N(s))} \right) ds  \right|  \\
 & \leq & \dsp \frac{1}{N} \sum_{j=1}^N  \int_0^t  \left| W_{ij}^N(s) \phi(s,X_i^N(s),X_j^N(s))  - W_{ij}^N(s) \phi(s,\bar X_i^N(s), \bar X_j^N(s))  \right| ds\\
  &  & +  \dsp \frac{1}{N} \sum_{j=1}^N  \int_0^t  \left| W_{ij}^N(s) - \bar W_{ij}^N(s) \right| \left|\phi(s,\bar X_i^N(s), \bar X_j^N(s)) \right| ds \\
  & & \dsp + \int_0^t \left| \frac{1}{N} \sum_{j=1}^N \left( \bar W_{ij}^N(s) \phi(s,\bar X_i^N(s), \bar X_j^N(s))  - \E_i \croch{\bar W_{ij}^N(s) \phi(s,\bar X_i^N(s),\bar X_j^N(s))} \right) \right| ds.
\end{array}$$
Thus, taking the expectation and using Hypothesis \ref{hyp:phi} and equation \eqref{eq:bound_weights1}, we obtain for all $t \in [0,T]$,
$$\begin{array}{lcl}
\dsp \E {\left| X_i^N - \bar X_i^N \right|}(t) & \leq & \dsp \int_0^t  \E \croch{\frac{1}{N} \sum_{j=1}^N   \left| W_{ij}^N(s)\right| L_\phi  \left(|X_i^N - \bar X_i^N |(s) + |X_j^N  - \bar X_j^N|(s)\right)  }ds\\
  &  & +  \dsp   M_\phi \int_0^t \sup_{i,j} \E \left| W_{ij}^N - \bar W_{ij}^N \right|(s) ds \\
  & & \dsp + \int_0^t \E \left| \frac{1}{N} \sum_{j=1}^N \left( \bar W_{ij}^N \phi(s,\bar X_i^N(s), \bar X_j^N(s))  - \E_i \croch{\bar W_{ij}^N \phi(s,\bar X_i^N(s),\bar X_j^N(s))} \right) \right| ds\\
  & \leq & \dsp 2 C_4(T) L_\phi    \int_0^t  \sup_i \E {|X_i^N - \bar X_i^N|(s)}ds\\
  &  & +  \dsp    M_\phi  \int_0^t \sup_{i,j} \E \left| W_{ij}^N - \bar W_{ij}^N \right|(s) ds \\
  & & \dsp + \int_0^t \E \left| \frac{1}{N} \sum_{j=1}^N \left( \bar W_{ij}^N \phi(s,\bar X_i^N(s), \bar X_j^N(s))  - \E_i \croch{\bar W_{ij}^N \phi(s,\bar X_i^N(s),\bar X_j^N(s))} \right) \right| ds
\end{array}$$
with $C_4(T) = (C + C_\Lambda T)  \exp(C_\Lambda T)$. Let us deal with the last term. By Jensen's inequality, we obtain $$ 
\begin{array}{lcl}
A & :=  & \dsp \left( \E \left| \frac{1}{N} \sum_{j=1}^N \left( \bar W_{ij}^N \phi(s,\bar X_i^N, \bar X_j^N)  - \E_i \croch{\bar W_{ij}^N \phi(s,\bar X_i^N,\bar X_j^N)} \right) \right|\right)^2\\
& \leq &  \dsp \E \croch{\left| \frac{1}{N} \sum_{j=1}^N \left( \bar W_{ij}^N \phi(s,\bar X_i^N, \bar X_j^N)  - \E_i \croch{\bar W_{ij}^N \phi(s,\bar X_i^N,\bar X_j^N)} \right) \right|^2}\\
& = &  \dsp \E \left[\frac{1}{N^2} \sum_{j_1=1}^N \sum_{j_2=1}^N \left( \bar W_{ij_1}^N \phi(s,\bar X_i^N, \bar X_{j_1}^N)  - \E_i \croch{\bar W_{ij_1}^N \phi(s,\bar X_i^N,\bar X_{j_1}^N)} \right)  \right.\\
& & \dsp ~~~~~~~~~~~~~~~~~~~~~~ \left( \bar W_{ij_2}^N \phi(s,\bar X_i^N, \bar X_{j_2}^N)  - \E_i \croch{\bar W_{ij_2}^N \phi(s,\bar X_i^N,\bar X_{j_2}^N)} \right)  \Bigg].
\end{array}$$
Moreover, by  the law of total expectation, we know that 
\begin{multline}
\dsp \E \Bigg[\Bigg( \bar W_{ij_1}^N \phi(s,\bar X_i^N, \bar X_{j_1}^N)  - \E_i \croch{\bar W_{ij_1}^N \phi(s,\bar X_i^N,\bar X_{j_1}^N)} \Bigg)  \Bigg( \ \bar W_{ij_2}^N \phi(s,\bar X_i^N, \bar X_{j_2}^N)  - \E_i \croch{\bar W_{ij_2}^N \phi(s,\bar X_i^N,\bar X_{j_2}^N)} \Bigg) \Bigg]\\
\dsp = \E \Bigg[\E_i\Bigg[\Bigg( \bar W_{ij_1}^N \phi(s,\bar X_i^N, \bar X_{j_1}^N)  - \E_i \croch{\bar W_{ij_1}^N \phi(s,\bar X_i^N,\bar X_{j_1}^N)} \Bigg)  \Bigg( \ \bar W_{ij_2}^N \phi(s,\bar X_i^N, \bar X_{j_2}^N)  - \E_i \croch{\bar W_{ij_2}^N \phi(s,\bar X_i^N,\bar X_{j_2}^N)} \Bigg) \Bigg]\Bigg].
\end{multline}
Then, for $j_1 \neq j_2$,  $(X_i,X_{j_1},W_{ij_1})$ and $(X_i,X_{j_2},W_{ij_2})$ are conditionally independent given $\mathcal{F}_i$. Thus, the previous expectation is equal to the product of the expectations, each of them being equal to zero. Thus, we only have left the diagonal terms and using the bound \eqref{eq:bound_weights2}, then  we get 
$$\begin{array}{lcl}
A & \leq   &\dsp  \E \croch{\frac{1}{N^2} \sum_{j=1}^N \left( \bar W_{ij_1}^N \phi(s,\bar X_i^N, \bar X_{j_1}^N)  - \E_i \croch{\bar W_{ij_1}^N \phi(s,\bar X_i^N,\bar X_{j_1}^N)} \right)^2 }\\
& \leq & \dsp \frac{4 M_\phi^2}{N^2} NC_4(T)^2 = \frac{4C_4(T)^2M_\phi^2 }{N}. 
\end{array}$$
Thus, finally, we have 
$$\begin{array}{lcl}
\sup_i \dsp \E {\left| X_i^N - \bar X_i^N \right|}(t) & \leq & \dsp 2 C_4(T) L_\phi    \int_0^t  \sup_i \E {|X_i^N - \bar X_i^N|(s)}ds\\
  &  & +  \dsp    M_\phi  \int_0^t \sup_{i,j} \E \left| W_{ij}^N - \bar W_{ij}^N \right|(s) ds \\
  & & \dsp + \frac{2{C_4(T){T}M_\phi }}{\sqrt{N}}. 
\end{array}$$
Moreover, taking the difference between this time the second equations of \eqref{eq:particle_syst_restrictive} and \eqref{eq:intermediate1}, and integrate over time, we obtain 
$$\begin{array}{lcl}
\dsp \E \left| W_{ij}^N - \bar W_{ij}^N \right|(t) &  = & \dsp\int_0^t  \E \Bigg| \frac{1}{N^2} \sum_{\substack{i_1=1 \\ i_1\neq i}}^N \sum_{\substack{j_1=1 \\ j _1\neq i}}^N\Bigg( \Lambda_{ij}(X_j^N,W_{ij}^N,X_{i_1}^N,X_{j_1}^N,W_{{i_1}{j_1}}^N)(s) \\
 & & ~~~~~~~~~~~~~~~~~~~~~~~~~~~~~~~~~~~- \E_{i,j} \croch{\Lambda_{ij}(\bar X_j^N,\bar W_{ij}^N,\bar X_{i_1}^N,\bar X_{j_1}^N,\bar W_{{i_1}{j_1}}^N)(s)} \Bigg) \Bigg| ds   \\
  &  \leq & \dsp\int_0^t  \E \Bigg| \frac{1}{N^2} \sum_{\substack{i_1=1 \\ i_1\neq i}}^N \sum_{\substack{j_1=1 \\ j _1\neq i}}^N \Bigg( \Lambda_{ij}(X_j^N,W_{ij}^N,X_{i_1}^N,X_{j_1}^N,W_{{i_1}{j_1}}^N)(s) \\
 & & ~~~~~~~~~~~~~~~~~~~~~~~~~~~~~~~~~~~- {\Lambda_{ij}(\bar X_j^N,\bar W_{ij}^N,\bar X_{i_1}^N,\bar X_{j_1}^N,\bar W_{{i_1}{j_1}}^N)(s)} \Bigg) \Bigg| ds   \\
  &   & \dsp + \int_0^t  \E \Bigg| \frac{1}{N^2}\sum_{\substack{i_1=1 \\ i_1\neq i}}^N \sum_{\substack{j_1=1 \\ j _1\neq i}}^N \Bigg( \Lambda_{ij}(\bar X_j^N,\bar W_{ij}^N,\bar X_{i_1}^N,\bar X_{j_1}^N,\bar W_{{i_1}{j_1}}^N)(s) \\
 & & ~~~~~~~~~~~~~~~~~~~~~~~~~~~~~~~~~~~- \E_{i,j} \croch{\Lambda_{ij}(\bar X_j^N,\bar W_{ij}^N,\bar X_{i_1}^N,\bar X_{j_1}^N,\bar W_{{i_1}{j_1}}^N)(s)} \Bigg) \Bigg| ds   \\
   &  \leq & \dsp L_\Lambda \int_0^t \left(3  \sup_i \dsp \E {\left| X_i^N - \bar X_i^N \right|}(s)  + 2   \sup_{i,j} \dsp \E {\left| W_{ij}^N - \bar W_{ij}^N \right|}(s)\right) ds \\
  &   & \dsp + \int_0^t  \E \Bigg| \frac{1}{N^2} \sum_{\substack{i_1=1 \\ i_1\neq i}}^N \sum_{\substack{j_1=1 \\ j _1\neq i}}^N \Bigg( \Lambda_{ij}(\bar X_j^N,\bar W_{ij}^N,\bar X_{i_1}^N,\bar X_{j_1}^N,\bar W_{{i_1}{j_1}}^N)(s) \\
 & & ~~~~~~~~~~~~~~~~~~~~~~~~~~~~~~~~~~~- \E_{i,j} \croch{\Lambda_{ij}(\bar X_j^N,\bar W_{ij}^N,\bar X_{i_1}^N,\bar X_{j_1}^N,\bar W_{{i_1}{j_1}}^N)(s)} \Bigg) \Bigg| ds 
\end{array}$$
where the last inequality has been obtained using Hypothesis  \ref{hyp:psi}. We deal with  the second term and define 
$$\begin{array}{lcl}
B  & :=   & \dsp \Bigg( \E \Bigg| \frac{1}{N^2} \sum_{\substack{i_1=1 \\ i_1\neq i}}^N \sum_{\substack{j_1=1 \\ j _1\neq i}}^N \Bigg( \Lambda_{ij}(\bar X_j^N,\bar W_{ij}^N,\bar X_{i_1}^N,\bar X_{j_1}^N,\bar W_{{i_1}{j_1}}^N)(s) \\
 & & ~~~~~~~~~~~~~~~~~~~~~~~~~~~~~~~~~~~- \E_{i,j} \croch{\Lambda_{ij}(\bar X_j^N,\bar W_{ij}^N,\bar X_{i_1}^N,\bar X_{j_1}^N,\bar W_{{i_1}{j_1}}^N)(s)} \Bigg) \Bigg| \Bigg)^2.
 \end{array}$$
 Applying Jensen's inequality, we obtain the bound 
$$\begin{array}{lcl}
B  & \leq    & \dsp \E \Bigg[\frac{1}{N^4}  \Bigg( \sum_{\substack{i_1=1 \\ i_1\neq i}}^N \sum_{\substack{j_1=1 \\ j _1\neq i}}^N \Bigg( \Lambda_{ij}(\bar X_j^N,\bar W_{ij}^N,\bar X_{i_1}^N,\bar X_{j_1}^N,\bar W_{{i_1}{j_1}}^N)(s) \\
 & & ~~~~~~~~~~~~~~~~~~~~~~~~~~~~~~~~~~~- \E_{i,j} \croch{\Lambda_{ij}(\bar X_j^N,\bar W_{ij}^N,\bar X_{i_1}^N,\bar X_{j_1}^N,\bar W_{{i_1}{j_1}}^N)(s)} \Bigg)  \Bigg)^2  \Bigg]\\
 &=    & \dsp \E \Bigg[\frac{1}{N^4}  \Bigg( \sum_{\substack{i_1, j_1,  i_2,  j_2 \\ \neq i}}\Bigg( \Lambda_{ij}(\bar X_j^N,\bar W_{ij}^N,\bar X_{i_1}^N,\bar X_{j_1}^N,\bar W_{{i_1}{j_1}}^N)(s) - \E_{i,j} \croch{\Lambda_{ij}(\bar X_j^N,\bar W_{ij}^N,\bar X_{i_1}^N,\bar X_{j_1}^N,\bar W_{{i_1}{j_1}}^N)(s)} \Bigg)\\
 & & 
~~~~~~~~~~~~~~~~~~~~ \Bigg( \Lambda_{ij}(\bar X_j^N,\bar W_{ij}^N,\bar X_{i_2}^N,\bar X_{j_2}^N,\bar W_{{i_2}{j_2}}^N)(s) - \E_{i,j} \croch{\Lambda_{ij}(\bar X_j^N,\bar W_{ij}^N,\bar X_{i_2}^N,\bar X_{ j_2}^N,\bar W_{{ i_2}{j_2}}^N)(s)} \Bigg)  \Bigg].
\end{array}$$
By the same reasoning as previously, for $(i_1,j_1) \neq (i_2,j_2)$, conditionally to $\mathcal{F}_{i,j}$, we have $(\bar X_j^N,\bar W_{ij}^N,\bar X_{i_1}^N,\bar X_{j_1}^N,\bar W_{{i_1}{j_1}}^N)$ is independent from $(\bar X_j^N,\bar W_{ij}^N,\bar X_{i_2}^N,\bar X_{ j_2}^N,\bar W_{{ i_2}{j_2}}^N)$ and we only have left the diagonal terms. Therefore, using Hypothesis \ref{hyp:psi} and equation \eqref{eq:bound_weights2}, we obtain 
$$\begin{array}{lcl}
B  & \leq      & \dsp \E \Bigg[\frac{1}{N^4}  \sum_{\substack{i_1=1 \\ i_1\neq i}}^N \sum_{\substack{j_1=1 \\ j _1\neq i}}^N  \Bigg( \Lambda_{ij}(\bar X_j^N,\bar W_{ij}^N,\bar X_{i_1}^N,\bar X_{j_1}^N,\bar W_{{i_1}{j_1}}^N)(s) \\
 & & ~~~~~~~~~~~~~~~~~~~~~~~~~~~~~~~~~~~- \E_{i,j} \croch{\Lambda_{ij}(\bar X_j^N,\bar W_{ij}^N,\bar X_{i_1}^N,\bar X_{j_1}^N,\bar W_{{i_1}{j_1}}^N)(s)} \Bigg)^2  \Bigg]\\
& \leq & \dsp {\frac{{4} C_\Lambda^2}{N^2} (1+ C_4(T))^2}.
\end{array}$$
and we deduce that 
$$\begin{array}{lcl}
\dsp \sup_{i,j} \E \left| W_{ij}^N - \bar W_{ij}^N \right|(t) &  \leq & \dsp L_\Lambda \int_0^t \left( 3  \sup_i \dsp \E {\left| X_i^N - \bar X_i^N \right|}(s)  
+ 2   \sup_{i,j} \dsp \E {\left| W_{ij}^N - \bar W_{ij}^N \right|}(s)\right) ds \\ 
& &\dsp  + \frac{{2}C_\Lambda{T}}{N} (1+ C_4(T)). 
\end{array}$$
Finally, altogether, by Gronwall's lemma, we get 
$$\begin{array}{lcl}
\dsp
\sup_i \dsp \E {\left| X_i^N - \bar X_i^N \right|}(t) + \sup_{i,j} \E \left| W_{ij}^N - \bar W_{ij}^N \right|(t)    & \leq & \dsp (2 C_4(T) L_\phi  +3 L\Lambda)   \int_0^t  \sup_i \E {|X_i^N - \bar X_i^N|(s)}ds\\
  &  & +  \dsp   ( M_\phi  +2 L\Lambda) \int_0^t \sup_{i,j} \E \left| W_{ij}^N - \bar W_{ij}^N \right|(s) ds \\
    & & \dsp + \frac{2{C_4(T) M_\phi }}{\sqrt{N}} +  \frac{C_\Lambda}{N} (1+ C_4(T))\\
    & \leq & \dsp  C_5(T) \int_0^t \left( \sup_i \E {|X_i^N - \bar X_i^N|(s)} +  \sup_{i,j} \E \left| W_{ij}^N - \bar W_{ij}^N \right|(s) \right) ds \\
    & & \dsp + \frac{2{C_4(T) {T}M_\phi + {2} C_\Lambda {T} (1+ C_4(T)) }}{\sqrt{N}}
\end{array}$$
 where $C_5(T) = \max( 2 C_4(T) L_\phi  +3 L\Lambda, M_\phi  +2 L\Lambda)$.
Then, by Gronwall's lemma, we get 
$$\begin{array}{lcl}
\dsp
\sup_i \dsp \E {\left| X_i^N - \bar X_i^N \right|}(t) + \sup_{i,j} \E \left| W_{ij}^N - \bar W_{ij}^N \right|(t)    & \leq & \dsp  \frac{C(T)}{\sqrt{N}} 
\end{array}$$
with $C(T) = \bigg( 2{C_4(T) {T}M_\phi + {2} C_\Lambda {T} (1+ C_4(T)) } \bigg) \exp(C_5(T)T)$.
\end{proof}

\subsection{Comparison of the graphon reformulations}

The main interest of this paper lies in the asymptotic regime $N \to \infty$. As we have seen previously, within the framework of dense graph theory, the natural limiting object that arises is the graphon. It provides a continuum representation by replacing discrete indices with continuous ones, thereby yielding a simpler description, particularly relevant when $N$ becomes large. Thus, following the approach in \cite{JabinPoyatoSoler21,AyiPoyatoPouradierDuteil}, we introduce the graphon reformulation.

\begin{definition}[Graphon reformulation] \label{def:graphon_reformulation}
For every $N \in \mathbb{N}^*$ and $t \in [0,T]$, we define 
$$\mu_t^{N} \in \mathcal{P}_\nu( I^2 \times \mathbb{R}^d \times \mathbb{R}^d \times \mathbb{R}), \, \bar \mu_t^{N} \in \mathcal{P}_\nu( I^2 \times \mathbb{R}^d \times \mathbb{R}^d \times \mathbb{R} )$$ and  $$\Lambda_N: I^2 \times \mathbb{R}^d \times \mathbb{R} \times \mathbb{R}^d  \times \mathbb{R}^d \times \mathbb{R} \to \R$$
where each element is given as follows 
\begin{equation}\label{eq:empirical_measure}
\mu_t^{N,\xi,\zeta}(x,y,w) := \sum_{i=1}^N  \sum_{j=1}^N \mathbf{1}_{I_i^N}(\xi) \mathbf{1}_{I_j^N}(\zeta) \delta_{X_i^N(t)}(x) \delta_{X_j^N(t)}(y) \delta_{W_{ij}^N(t)}(w), \, \xi,\zeta \in I
\end{equation}

 \begin{equation}\label{eq:empirical_measure_intermediate}
\bar \mu_t^{N,\xi,\zeta}(x,y,w) := \sum_{i=1}^N  \sum_{j=1}^N \mathbf{1}_{I_i^N}(\xi) \mathbf{1}_{I_j^N}(\zeta) \lij(x,y,w), \, \xi,\zeta \in I
\end{equation}
 and 
 \begin{equation}\label{eq:empirical_measure_weights}\Lambda_N(\xi,\zeta,y,w, \tx, \ty, \tw)  := \sum_{i=1}^N  \sum_{j=1}^N \mathbf{1}_{I_i^N}(\xi) \mathbf{1}_{I_j^N}(\zeta) \Lambda_{ij}(y,w, \tx, \ty, \tw), \, \xi,\zeta \in I
\end{equation}
\end{definition}
The interest of this graphon reformulation is that it allows us to connect the intermediate particle system with a solution of the Vlasov-type equation.
\begin{lemma} \label{lem:graphon_solution_Vlasov}
Assume that the interaction function satisfy Hypothesis \ref{hyp:phi} and the weight dynamics Hypotheses \ref{hyp:psi} and \ref{hyp:struct_lambda}. 
Let the particle systems \eqref{eq:particle_syst_restrictive} and
\eqref{eq:intermediate1} be initialized with random variables
$(X^{N,0},W^{N,0})$ such that the family
$\{X_i^{N,0},\, W_{ij}^{N,0} : 1\le i,j\le N\}$ is independent,
$\E|X_i^{N,0}|<+\infty$, and $W^{N,0}$ satisfies
Hypothesis~\ref{hyp:graph_initial}. Consider the unique solution $(\bar X^{N}, \bar W^{N})$ solution to \eqref{eq:intermediate1}, the associate laws $$\bar \lambda_t^{N,i,j} (\cdot, \cdot, \cdot) = \text{law} \left((\bxi(t), \bxj(t), \bwij(t))\right)$$ and the graphon reformulation $(\bar \mu^N,\Lambda_N)$ introduced in Definition \ref{def:graphon_reformulation}. Then, $\bar \mu^N$ is a distributional solution to the Vlasov-type equation \eqref{eq:vlasov-equation}-\eqref{eq:vlasov-force} with weight dynamics $\Lambda_N$ and initial datum 
$\bar \mu_{t=0}^{N,\xi,\zeta} := \sum_{i=1}^N  \sum_{j=1}^N \mathbf{1}_{I_i^N}(\xi) \mathbf{1}_{I_j^N}(\zeta) \text{law} \left((X_i^{N,0}, X_j^{N,0}, W_{ij}^{N,0})\right), \, \xi,\zeta \in I$
\end{lemma}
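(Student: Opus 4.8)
The plan is to exploit the equivalence established in Proposition~\ref{pro:distributional-solution-Vlasov-fibered}, which reduces the claim to verifying, for a.e.\ $(\xi,\zeta)$, the fibered weak formulation \eqref{eq:distributional-solution-fibered} for the slice $\bar\mu_t^{N,\xi,\zeta}$ with force $F_{\Lambda_N}[\bar\mu_t^N]$. Since, by construction \eqref{eq:empirical_measure_intermediate}, the map $(\xi,\zeta)\mapsto\bar\mu_t^{N,\xi,\zeta}$ is piecewise constant, equal to $\lij$ on each block $I_i^N\times I_j^N$, it suffices to check the equation block by block: for each pair $(i,j)$ I will show that the curve $t\mapsto\lij=\mathrm{law}(\bxi(t),\bxj(t),\bwij(t))$ solves the continuity equation with the velocity obtained by restricting $F_{\Lambda_N}[\bar\mu_t^N]$ to $I_i^N\times I_j^N$, which is the per-block version of the push-forward characterization \eqref{eq:fixed_point}.

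First I would invoke the Liouville principle: once the whole family of laws $\{\lij\}$ is frozen (it is determined by the solution of \eqref{eq:intermediate1}), the right-hand side of \eqref{eq:intermediate1} becomes a genuine, deterministic, time-dependent drift for the process $(\bxi,\bxj,\bwij)$, so its law $\lij$ automatically satisfies a continuity equation with that drift. The entire content of the lemma is thus the identification of this drift with the three components of $F_{\Lambda_N}[\bar\mu_t^N]$ from \eqref{eq:vlasov-force} evaluated along $\supp\lij$.

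Then I would match the components one at a time, turning the conditional expectations $\E_i$, $\E_{i,j}$ into integrals against the frozen laws exactly as in the passage from \eqref{eq:intermediate1} to \eqref{eq:intermediate2}. For the first component, evaluating the corresponding entry of $F_{\Lambda_N}$ at $x=\bxi$ and integrating out the $\tx$-independent dummy variable reproduces $\frac1N\sum_j\E_i\croch{\bwij\phi(t,\bxi,\bxj)}$: the self-term $j=i$ is harmless since $\bar W_{ii}^N\equiv0$, and independence of $\bxi$ from $(\bxj,\bwij)$ turns the frozen-law integral into the conditional expectation. The second component is identical with $\zeta$ and $\bxj$ in place of $\xi$ and $\bxi$. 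The delicate part is the third component, whose integral $\inttt \Lambda_N(\xi,\zeta,\bxj,\bwij,\tx,\ty,\tw)\,\bar\mu_t^{N,\txi,\tz}(d\tx,d\ty,d\tw)\,d\txi\,d\tz$ runs over the \emph{full} index range $(i_1,j_1)$, whereas the drift of $\bwij$ in \eqref{eq:intermediate1} carries the restriction $i_1,j_1\neq i$.

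The main obstacle is therefore reconciling these two sums, and this is exactly where Hypothesis~\ref{hyp:struct_lambda} and the independence of the family enter. On the blocks with $i_1=j_1$ the law $\lijun$ is supported on $\{\tx=\ty,\ \tw=0\}$, so $\Lambda_N$ vanishes there by Hypothesis~\ref{hyp:struct_lambda}; the same structural cancellation removes the contributions with $i_1=j$ or $j_1=j$. For the remaining off-diagonal blocks, conditional independence of $(\bxj,\bwij)$ from $(\bar X_{i_1}^N,\bar X_{j_1}^N,\bar W_{i_1j_1}^N)$, granted by the propagation-of-independence structure of the intermediate system, lets me replace $\E_{i,j}\croch{\Lambda_{ij}(\cdots)}$ by $\int\Lambda_{ij}(\bxj,\bwij,\tx,\ty,\tw)\,\lijun$, which is precisely the $(i_1,j_1)$-contribution to the third force component. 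I expect to spend the most care on the blocks carrying the index $i$ (i.e.\ $\txi\in I_i^N$ or $\tz\in I_i^N$): since dropping the $x$-dependence of $\Lambda$ destroys the structural cancellation that would otherwise eliminate them, I must verify that the \emph{explicit} restriction $i_1,j_1\neq i$ imposed in \eqref{eq:particle_syst_restrictive} matches exactly the corresponding blocks excised from the force. Once this bookkeeping is complete, the drift coincides with $F_{\Lambda_N}[\bar\mu_t^N]$ along $\supp\lij$, the continuity equation holds on each block, and reassembling the blocks via Proposition~\ref{pro:distributional-solution-Vlasov-fibered} yields the claim, the initial datum being matched by construction of $\bar\mu_{t=0}^{N}$.
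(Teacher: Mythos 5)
Your strategy is the same as the paper's: freeze the family of laws $\{\lij\}$, observe that the intermediate dynamics \eqref{eq:intermediate1} then reduce to a deterministic time-dependent drift so that each $\lij$ solves a Liouville/continuity equation (this is what the paper establishes in weak form as \eqref{eq:law_part}), identify that drift with the components of $F_{\Lambda_N}[\bar\mu_t^N]$ on each block $I_i^N\times I_j^N$, and reassemble using the piecewise-constant structure of $\bar\mu_t^N$ (the paper's passage from \eqref{eq:law_part} to \eqref{eq:law_part2}). Your treatment of the first two components is correct, including the diagonal term $j=i$: there both sides vanish because $\bar W_{ii}^N\equiv 0$, so the third marginal of $\bar\lambda_t^{N,i,i}$ is $\delta_0$ and the force integrand carries the factor $\tw$.

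The gap sits exactly where you locate the delicate part, and the resolution you propose fails. Hypothesis \ref{hyp:struct_lambda} cancels terms \emph{pathwise}: inside $\E_{i,j}$ in \eqref{eq:intermediate1}, for $i_1=j$ the first and third slots of $\Lambda_{ij}$ are occupied by the \emph{same} random variable $\bxj$, so the integrand is identically zero (likewise for $j_1=j$ and $i_1=j_1$). In the Vlasov force, by contrast, the weak formulation pairs the external point $(y,w)$, distributed under $\lij$, with an integration of $\Lambda_N$ against $\lijun$; the relevant measure is the \emph{product} $\lij\otimes\lijun$, under which (for $i_1=j$) the variables $y$ and $\tx$ are \emph{independent copies} of $\bxj$, not almost surely equal. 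Since $\Lambda_N$ vanishes only on the thin coincidence set $\{y=\tx\}$, that block's contribution to the force is generically nonzero, so ``the same structural cancellation'' does not remove it; the cancellation you correctly use for $i_1=j_1$ works only because there the coincidence is internal to the integration variables $(\tx,\ty,\tw)$. The blocks $\txi\in I_i^N$ or $\tz\in I_i^N$ are worse: they are \emph{not} excised from the force --- by \eqref{eq:vlasov-force} the integral runs over all of $I^2$ --- so there is nothing for the explicit restriction $i_1,j_1\neq i$ of \eqref{eq:particle_syst_restrictive} and \eqref{eq:intermediate1} to ``match'', and these contributions are also generically nonzero. Altogether the drift of $\lij$ and $F_{\Lambda_N}[\bar\mu_t^N]$ differ on $O(N)$ of the $N^2$ blocks, a discrepancy of order $O(1/N)$ that does not vanish identically, so the exact identification your plan requires cannot be completed. (You have in fact put your finger on a point the paper's own proof passes over in silence: \eqref{eq:law_part} carries the restricted sum while \eqref{eq:law_part2} carries the full integral, and the step identifying them is asserted without justification. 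A genuine fix would be either to define the intermediate system with unrestricted frozen-law sums, or to track the diagonal blocks as an explicit $O(1/N)$ error rather than claiming an exact distributional solution.)
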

\begin{proof}
 We start by checking that the law $\bar \lambda_t^{N,i,j} (\cdot, \cdot, \cdot) = \text{law} \left((\bxi(t), \bxj(t), \bwij(t))\right)$ satisfy the following equation: 
 
 \begin{multline} \label{eq:law_part}
 \partial_t \lij(x,y,w) + \frac{1}{N}  \sum_{k=1}^N\nabla_x \cdot \left( \left(  \int_{\R^d \times \R}  \tw \phi (t, x, \ty) \int_{\R^d} \bar  \lambda_t^{N,i,k} (\tx, \ty, \tw) d \tx d\ty d\tw \right)  \lij(x,y,w) \right) \\
 + \frac{1}{N}  \sum_{k=1}^N\nabla_y \cdot \left( \left(  \int_{\R^d \times \R}  \tw \phi (t, y, \ty) \int_{\R^d} \bar  \lambda_t^{N,j,k} (\tx, \ty, \tw) d \tx d\ty d\tw \right)
  \lij(x,y,w) \right) \\
  +  \frac{1}{N^2} \sum_{\substack{i_1=1 \\ i_1\neq i}}^N \sum_{\substack{j_1=1 \\ j _1\neq i}}^N   \partial_w \left( \left(  \int_{\R^d \times \R^d \times \R} \Lambda_{ij}(y,w, \tx, \ty, \tw)   \lijun(\tx, \ty, \tw)  d \tx d\ty d\tw \right)  \lij(x, y,w)   \right) = 0.
 \end{multline}
 
 Let $\varphi \in \mathcal{C}_c^1(\R^d\times \R^d \times \R)$,  then we have
$$\begin{array}{l}
\dsp  \frac{d}{dt}  \int_{\R^d \times \R^d \times \R} \varphi(x,y,w)  \lij(x,y,w)  dx dy dw \\
\dsp  =  \frac{d}{dt}  \E \croch{\varphi(\bxi, \bxj, \bwij)}\\
\dsp = \E \croch{\nabla_x \varphi(\bxi, \bxj, \bwij) \cdot  \frac{d}{dt} \bar X_i^N } + \E \croch{\nabla_y \varphi(\bxi, \bxj, \bwij) \cdot  \frac{d}{dt} \bar X_j^N } \\
\dsp  ~~~~~~~~~  ~~  +  \E \croch{\partial_w \varphi(\bxi, \bxj, \bwij)   \frac{d}{dt}  \bar W_{ij}^N}  \\
\dsp = \E \croch{\nabla_x \varphi(\bxi, \bxj, \bwij) \cdot   \frac{1}{N} \sum_{k=1}^N \int_{\R^d \times \R}  \tw \phi (t, \bxi, \ty) \int_{\R^d} \bar  \lambda_t^{N,i,k} (\tx, \ty, \tw) d \tx d\ty d\tw } \\
\dsp + \E \croch{\nabla_y \varphi(\bxi, \bxj, \bwij) \cdot  \frac{1}{N} \sum_{k=1}^N \int_{\R^d \times \R}  \tw \phi (t, \bxj, \ty) \int_{\R^d} \bar  \lambda_t^{N,j,k} (\tx, \ty, \tw) d \tx d\ty d\tw  } \\
\dsp +  \E \left[\partial_w \varphi(\bxi, \bxj, \bwij)    \frac{1}{N^2} \sum_{\substack{i_1=1 \\ i_1\neq i}}^N \sum_{\substack{j_1=1 \\ j _1\neq i}}^N \int_{\R^d \times \R^d \times \R}\ {\Lambda_{ij}(\bar X_j^N,\bar W_{ij}^N, \tx, \ty, \tw)} \right.\ \left.  \lijun( \tx, \ty, \tw) d \tx d\ty d\tw  \right]  \\
 \end{array} $$
 $$\begin{array}{l}
\dsp =\left( \int_{\R^d \times \R^d \times \R}\nabla_x \varphi(x,y,w) \cdot   \frac{1}{N} \sum_{k=1}^N \int_{\R^d \times \R}  \tw \phi (t, x, \ty) \int_{\R^d} \bar  \lambda_t^{N,i,k} (\tx, \ty, \tw) d \tx d\ty d\tw \right.\\
\dsp  \dsp ~~~~~~~~~~~~~~~~~~~~~~~~~ ~~~~~~~~~~~~~~~~~~~~~  \lij(x,y,w) dx dy dw \Bigg) \\
\dsp +\left( \int_{\R^d \times \R^d \times \R}\nabla_y \varphi(x,y,w) \cdot   \frac{1}{N} \sum_{k=1}^N \int_{\R^d \times \R}  \tw \phi (t, y, \ty) \int_{\R^d} \bar  \lambda_t^{N,j,k} ( \tx, \ty, \tw) d \tx d\ty d\tw \right.\\
\dsp  \dsp ~~~~~~~~~~~~~~~~~~~~~~~~~ ~~~~~~~~~~~~~~~~~~~~~  \lij(x,y,w) dx dy dw \Bigg) \\ 
\dsp + \left(\int_{\R^d \times \R^d \times \R}  \partial_w \varphi(x,y,w)    \frac{1}{N^2} \sum_{\substack{i_1=1 \\ i_1\neq i}}^N \sum_{\substack{j_1=1 \\ j _1\neq i}}^N  \int_{\R^d \times \R^d \times \R}\ {\Lambda_{ij}(y,w, \tx, \ty, \tw)}   \lij(x, y,w)  \right. \left.  \lijun(\tx, \ty, \tw) d \tx d\ty d\tw  dx dy dw\right)  
 \end{array} $$
and we recover  that $\bar \lambda_t^{N,i,j}$ satisfies \eqref{eq:law_part} in a weak form. 

By definition, the measure $\bar\mu_t^N$ is piecewise constant with respect to $(\xi,\zeta)$
on the partition $\{I_i^N\times I_j^N\}_{1\le i,j\le N}$, and satisfies
\[
\bar\mu_t^{N,\xi,\zeta} = \bar\lambda_t^{N,i,j}
\quad \text{for } (\xi,\zeta)\in I_i^N\times I_j^N.
\]
Integrating \eqref{eq:law_part} against $\mathbf{1}_{I_i^N}(\xi)\mathbf{1}_{I_j^N}(\zeta)$  and summing over $(i,j)$  yields the following equation 
 \begin{multline} \label{eq:law_part2}
 \partial_t \bar \mu_t^{N,\xi,\zeta}(x,y,w)+ \nabla_x \cdot \left( \left(  \int_{I \times \R^d \times \R}  \tw \phi (t, x, \ty) \int_{\R^d} \bar \mu_t^{N,\xi,\tz}(\tx, \ty, \tw)  d \tx d\ty d\tw d\tz \right)  \bar \mu_t^{N,\xi,\zeta}(x,y,w) \right) \\
 +\nabla_y \cdot \left( \left(  \int_{I \times \R^d \times \R}  \tw \phi (t, y, \ty) \int_{\R^d}   \bar \mu_t^{N,\zeta,\tz}(\tx, \ty, \tw)  d \tx d\ty d\tw d\tz \right) \bar \mu_t^{N,\xi,\zeta}(x,y,w) \right) \\
  +   \partial_w \left( \left(  \int_{ I^2 \times \R^d \times \R^d \times \R}  \Lambda_N(\xi, \zeta,y,w, \tx, \ty, \tw)    \bar \mu_t^{N,\txi,\tz}(\tx, \ty, \tw)  d \tx d\ty d\tw d \txi d \tz  \right)  \bar \mu_t^{N,\xi,\zeta}(x,y,w)   \right) = 0
 \end{multline}
 which concludes the proof.

\end{proof}

We can finally state our result of comparison of the graphon reformulations.
\begin{lemma}[Comparison of the graphon reformulations] \label{lem:comparison_graphons} Suppose that the assumptions of Lemma \ref{lem:graphon_solution_Vlasov} hold. Then, for all $t\in[0,T]$,
\begin{equation}
\begin{array}{lcl}
\dsp d_{1}\!\left(\E \mu_t^{N}, \bar \mu_t^{N}\right)
\le 2 \sup_i \E \bigl| X_i^N(t) - \bar X_i^N(t) \bigr|
   + \sup_{i,j} \E \bigl| W_{ij}^N(t) - \bar W_{ij}^N(t) \bigr|.
\end{array}
\end{equation}
\end{lemma}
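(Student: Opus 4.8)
The plan is to reduce everything to a cell-by-cell comparison of laws and then to invoke the pathwise error control of Lemma~\ref{lem:error-estimate}. The starting observation is that both $\E\mu_t^N$ and $\bar\mu_t^N$ are piecewise constant in $(\xi,\zeta)$ on the grid $\{I_i^N\times I_j^N\}_{1\le i,j\le N}$. Indeed, for $(\xi,\zeta)\in I_i^N\times I_j^N$ the empirical slice $\mu_t^{N,\xi,\zeta}=\delta_{X_i^N(t)}\otimes\delta_{X_j^N(t)}\otimes\delta_{W_{ij}^N(t)}$ is a single Dirac mass, so taking expectations gives $(\E\mu_t^N)^{\xi,\zeta}=\mathrm{law}\big((X_i^N(t),X_j^N(t),W_{ij}^N(t))\big)$, while by \eqref{eq:empirical_measure_intermediate} one has $\bar\mu_t^{N,\xi,\zeta}=\lij=\mathrm{law}\big((\bxi(t),\bxj(t),\bwij(t))\big)$.

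First I would bound, on a fixed cell, the bounded--Lipschitz distance between these two laws by an expected distance. Writing $A:=(X_i^N(t),X_j^N(t),W_{ij}^N(t))$ and $B:=(\bxi(t),\bxj(t),\bwij(t))$, for any admissible test function with $\|\varphi\|_{\rm BL}\le 1$ one has $\int\varphi\,d(\E\mu_t^N)^{\xi,\zeta}-\int\varphi\,d\bar\mu_t^{N,\xi,\zeta}=\E[\varphi(A)-\varphi(B)]\le\E|A-B|$, since $[\varphi]_{\rm Lip}\le 1$ and both systems are realised on the same probability space (this shared coupling is exactly the one underlying Lemma~\ref{lem:error-estimate}). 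Taking the supremum over $\varphi$ and splitting the product norm yields, for $(\xi,\zeta)\in I_i^N\times I_j^N$,
\[
d_{\rm BL}\!\big((\E\mu_t^N)^{\xi,\zeta},\bar\mu_t^{N,\xi,\zeta}\big)
\le \E\big|X_i^N(t)-\bxi(t)\big|+\E\big|X_j^N(t)-\bxj(t)\big|+\E\big|W_{ij}^N(t)-\bwij(t)\big|.
\]

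It then remains to integrate this cellwise bound over $(\xi,\zeta)\in I^2$. Since each cell $I_i^N\times I_j^N$ has Lebesgue measure $1/N^2$ and the integrand is constant there,
\[
d_1\!\big(\E\mu_t^N,\bar\mu_t^N\big)
\le \frac{1}{N^2}\sum_{i,j=1}^N\Big(\E|X_i^N-\bxi|+\E|X_j^N-\bxj|+\E|W_{ij}^N-\bwij|\Big)(t).
\]
Summing the first term over $j$ and the second over $i$ collapses one index and produces the empirical average $\frac1N\sum_i\E|X_i^N-\bxi|$ (once from each of the two state slots) together with $\frac{1}{N^2}\sum_{i,j}\E|W_{ij}^N-\bwij|$; bounding each average by its supremum gives the factor $2$ on the state term and the single weight term, which is the claimed inequality.

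There is no serious analytical obstacle here; the only point requiring care is conceptual, namely that we compare the \emph{deterministic} measure $\E\mu_t^N$ with the deterministic $\bar\mu_t^N$, so that integrating a test function against $\E\mu_t^N$ reduces to an ordinary expectation and the coupling bound $d_{\rm BL}(\mathrm{law}\,A,\mathrm{law}\,B)\le\E|A-B|$ applies with the natural same-space coupling. Keeping this straight is precisely what enables the clean passage from the bounded--Lipschitz distance to the pathwise quantities already controlled in Lemma~\ref{lem:error-estimate}.
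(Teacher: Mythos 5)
Your proposal is correct and follows essentially the same route as the paper's proof: identify the constant value of each measure on a cell $I_i^N\times I_j^N$ (expected Dirac versus $\bar\lambda_t^{N,i,j}$), bound the cellwise $d_{\rm BL}$ distance via a test function with $[\varphi]_{\rm Lip}\le 1$ and the same-space coupling, then integrate over $I^2$. Your write-up is in fact slightly more explicit than the paper's on the last step (the $1/N^2$ cell measure, index collapse, and passage to suprema), which the paper compresses into one sentence, but the argument is identical.
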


\begin{proof}
Fix $(\xi,\zeta)\in I^2$ and let $(i,j)$ be such that $\xi\in I_i^N$ and $\zeta\in I_j^N$.
By definition,
\[
\mu_t^{N,\xi,\zeta}=\delta_{(X_i^N(t),\,X_j^N(t),\,W_{ij}^N(t))},
\qquad
\bar\mu_t^{N,\xi,\zeta}=\bar\lambda_t^{N,i,j}
= \text{law} \left((\bxi(t), \bxj(t), \bwij(t))\right).
\]
Therefore,
\[
\begin{aligned}
d_{\rm BL}\!\left(\E \mu_t^{N,\xi,\zeta}, \bar \mu_t^{N,\xi,\zeta}\right)
&= d_{\rm BL}\!\left(
\E \delta_{(X_i^N(t),X_j^N(t),W_{ij}^N(t))},
\bar\lambda_t^{N,i,j}
\right)\\
&= \sup_{\|\varphi\|_{\rm BL}\le 1}
\int_{\R^d\times\R^d\times\R}
\varphi\, d\!\left(
\E \delta_{(X_i^N(t),X_j^N(t),W_{ij}^N(t))}
- \bar\lambda_t^{N,i,j}
\right)\\
&= \sup_{\|\varphi\|_{\rm BL}\le 1}
\E\!\left[
\varphi(X_i^N(t),X_j^N(t),W_{ij}^N(t))
- \varphi(\bar X_i^N(t),\bar X_j^N(t),\bar W_{ij}^N(t))
\right].
\end{aligned}
\]
Since $\|\varphi\|_{\rm BL}\le 1$ implies $[\varphi]_{\rm Lip}\le 1$, we obtain
\[
d_{\rm BL}\!\left(\E \mu_t^{N,\xi,\zeta}, \bar \mu_t^{N,\xi,\zeta}\right)
\le
\E|X_i^N(t)-\bar X_i^N(t)|
+\E|X_j^N(t)-\bar X_j^N(t)|
+\E|W_{ij}^N(t)-\bar W_{ij}^N(t)|.
\]
Integrating this bound over $(\xi,\zeta)\in I^2$ and using the definition
of the distance $d_1$ yields the desired result.
\end{proof}

\section{Mean-field limit}\label{sec:mfl}
This section is devoted to the statement and proof of the main result of the paper.
\begin{theorem}\label{theo:main}
 Let the interaction function $\phi$  satisfy Hypothesis \ref{hyp:phi}. Let the weight dynamics  $\Lambda$ satisfy Hypotheses \ref{hyp:psi} and \ref{hyp:struct_lambda}, and assume that it does not depend on the variable $x$.  Let the particle systems \eqref{eq:particle_syst_restrictive} be initialized with random variables
$(X^{N,0},W^{N,0})$ such that the family
$\{X_i^{N,0},\, W_{ij}^{N,0} : 1\le i,j\le N\}$ is independent,
$\E|X_i^{N,0}|<+\infty$, and $W^{N,0}$ satisfies
Hypothesis~\ref{hyp:graph_initial}. Consider the unique solution associated $(X^{N}, W^{N})$  and define the measure $\mu^N$ as in Definition \ref{def:graphon_reformulation}.
Suppose that there exists $\mu_0 \in \mathcal{P}_{1,\nu}(I^2 \times \R^d \times \R^d \times \R)$ satisfying  Hypothesis \ref{hyp:supp_compact}  with $\nu$ the Lebesgue measure such that $\dsp \lim_{N \to \infty} d_1( \mu_0^N, \mu_0)  =0$ where $$\mu_0^{N,\xi,\zeta}(x,y,w) := \sum_{i=1}^N  \sum_{j=1}^N \mathbf{1}_{I_i^N}(\xi) \mathbf{1}_{I_j^N}(\zeta)  \lambda^{N,i,j}_0(x,y,w)$$ with $\lambda_0^{N,i,j} ( \cdot, \cdot, \cdot) = \text{law} \left((X_i^{N,0}, X_j^{N,0}, W_{ij}^{N,0})\right)$. Let $\mu \in \mathcal{C}([0,T], \mathcal{P}_{1,\nu}(I^2 \times \mathbb{R}^d \times \mathbb{R}^d \times \mathbb{R}))$ be the unique distributional solution to \eqref{eq:vlasov-equation}-\eqref{eq:vlasov-force} issued at $\mu_0$ with given $\Lambda$. Then, we have $$\dsp \lim_{N \to \infty}  \sup_{t \in [0,T]} d_1(\E \mu_t^N, \mu_t) = 0.$$
\end{theorem}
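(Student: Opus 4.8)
The plan is to decompose the error through the intermediate system and its graphon reformulation, and then to combine the three quantitative ingredients already established: the error estimate between the original and intermediate systems (Lemma~\ref{lem:error-estimate}), the comparison of graphon reformulations (Lemma~\ref{lem:comparison_graphons}), and the stability estimate for the Vlasov-type equation (Theorem~\ref{theo:stability}). Writing $\bar\mu^N$ for the graphon reformulation of the intermediate system, the triangle inequality gives, for every $t\in[0,T]$,
\[
d_1(\E\mu_t^N,\mu_t)\le d_1(\E\mu_t^N,\bar\mu_t^N)+d_1(\bar\mu_t^N,\mu_t).
\]
The first term is a \emph{probabilistic} fluctuation error between the empirical measure and the law-based intermediate object, while the second is a \emph{deterministic} error between two solutions of the Vlasov-type equation driven by different weight dynamics and different initial data.

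First I would control the fluctuation term. By Lemma~\ref{lem:comparison_graphons},
\[
d_1(\E\mu_t^N,\bar\mu_t^N)\le 2\sup_i\E|X_i^N(t)-\bar X_i^N(t)|+\sup_{i,j}\E|W_{ij}^N(t)-\bar W_{ij}^N(t)|,
\]
and Lemma~\ref{lem:error-estimate} bounds the right-hand side by $2C(T)/\sqrt N$, uniformly in $t\in[0,T]$, so this term vanishes as $N\to\infty$. This is the step where the structural Hypothesis~\ref{hyp:struct_lambda} and the $x$-independence of $\Lambda$ are essential, as they underlie the conditional-independence structure yielding the $1/\sqrt N$ rate.

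Next I would treat the deterministic term. By Lemma~\ref{lem:graphon_solution_Vlasov}, $\bar\mu^N$ is a distributional solution to \eqref{eq:vlasov-equation}-\eqref{eq:vlasov-force} with weight dynamics $\Lambda_N$ and initial datum $\mu_0^N$, while $\mu$ is the solution with dynamics $\Lambda$ and initial datum $\mu_0$. Applying Theorem~\ref{theo:stability} with these two solutions (so that, in the notation of that theorem, $\Lambda\rightsquigarrow\Lambda_N$, $\bar\Lambda\rightsquigarrow\Lambda$, $\bar\mu_s\rightsquigarrow\mu_s$ and the initial datum of the first solution is $\mu_0^N$) yields
\[
d_1(\bar\mu_t^N,\mu_t)\le e^{4C_3(T)t}\Big(d_1(\mu_0^N,\mu_0)+C_3(T)\,\mathcal{E}_N(t)\Big),
\]
where the discretization error of the weight dynamics is
\[
\mathcal{E}_N(t):=\int_0^t\int_{I^2\times\R^d\times\R^d\times\R}\inttt|\Lambda_N-\Lambda|(\xi,\zeta,x,y,w,\tx,\ty,\tw)\,\mu_s^\xztt(d\tx,d\ty,d\tw)\,d\txi\,d\tz\,\mu_0^{N,\xi,\zeta}(dx,dy,dw)\,d\xi\,d\zeta\,ds.
\]
Since $d_1(\mu_0^N,\mu_0)\to0$ by assumption, everything reduces to showing $\mathcal{E}_N(t)\to0$, after which taking $\sup_{t\in[0,T]}$ (using monotonicity of $\mathcal{E}_N$ in $t$ and $e^{4C_3(T)t}\le e^{4C_3(T)T}$) concludes.

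The hard part is precisely the convergence $\mathcal{E}_N(t)\to0$. By construction $\Lambda_N(\xi,\zeta,\cdot)$ is the average of $\Lambda$ over the dyadic box $I_i^N\times I_j^N$ containing $(\xi,\zeta)$, i.e. the conditional expectation of $\Lambda$ with respect to the $\sigma$-algebra generated by the partition $\{I_i^N\times I_j^N\}$; martingale convergence (equivalently, the Lebesgue differentiation theorem) then gives $\Lambda_N\to\Lambda$ for a.e.\ $(\xi,\zeta)$ at each fixed value of the remaining arguments. The Lipschitz regularity of $\Lambda$ in its state variables, uniform in $(\xi,\zeta)$ (Hypothesis~\ref{hyp:psi}), allows this to be upgraded to convergence locally uniform in the state variables, by first fixing a common full-measure set of $(\xi,\zeta)$ along a countable dense family of states and then extending by continuity. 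I would then pass to the limit in $\mathcal{E}_N$ by dominated convergence, dominating the integrand via $|\Lambda_N-\Lambda|\le 2C_\Lambda(1+|w|)$ from \eqref{eq:psisublin} (the sublinearity being only in $w$), together with the compact support of $\mu_s$ from Proposition~\ref{prop:compacite} and the uniform bound on the $w$-support of $\mu_0^N$ from Hypothesis~\ref{hyp:graph_initial}; crucially, the $x$-independence of $\Lambda$ and its boundedness in the remaining state directions ensure that the possibly non-compact $X$-marginals of $\mu_0^N$ do not spoil integrability. The one remaining point demanding care is that Theorem~\ref{theo:stability} is stated for compactly supported initial data, whereas here $\mu_0^N$ satisfies only the weaker moment assumptions; I would reconcile this either by truncating the $X$-marginals and exploiting $d_1(\mu_0^N,\mu_0)\to0$, or by noting that in the present $x$-independent, bounded-$\phi$ regime the constants entering the stability argument depend only on the $w$-moments, which remain controlled under Hypothesis~\ref{hyp:graph_initial}.
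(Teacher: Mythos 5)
Your proposal is correct and follows essentially the same route as the paper's own proof: the identical triangle-inequality decomposition through $\bar\mu^N$, with Lemma~\ref{lem:comparison_graphons} and Lemma~\ref{lem:error-estimate} controlling the fluctuation term, Theorem~\ref{theo:stability} (via Lemma~\ref{lem:graphon_solution_Vlasov}) controlling the deterministic term, and the Lebesgue differentiation theorem plus dominated convergence (with the cutoff $\chi_T$ and the bound $2C_\Lambda(1+|w|)$) giving the vanishing of the $|\Lambda_N-\Lambda|$ error. The only deviations are immaterial or in your favor: you swap the roles of the two solutions when invoking the stability estimate, you spell out the upgrade from a.e.\ pointwise to locally uniform convergence of $\Lambda_N$, and you explicitly flag that $\mu_0^N$ need not satisfy the compact-support Hypothesis~\ref{hyp:supp_compact} demanded by Theorem~\ref{theo:stability} --- a point the paper's proof silently passes over --- together with a reasonable repair.
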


\begin{proof}
By the triangle inequality,
\[
\sup_{t\in[0,T]} d_1(\E\mu_t^N,\mu_t)
\le
\sup_{t\in[0,T]} d_1(\E\mu_t^N,\bar\mu_t^N)
+
\sup_{t\in[0,T]} d_1(\bar\mu_t^N,\mu_t).
\]
The first term is controlled by Lemma~\ref{lem:comparison_graphons},
while the second one is estimated using the stability result of
Theorem~\ref{theo:stability}. Hence,
\begin{align*}
\sup_{t\in[0,T]} d_1(\E\mu_t^N,\mu_t)
&\le
2 \sup_i \E|X_i^N(t)-\bar X_i^N(t)|
+
\sup_{i,j}\E|W_{ij}^N(t)-\bar W_{ij}^N(t)|\\
&\quad
+ e^{4C_3(T)T}
\Bigg(
d_1(\bar\mu_0^N,\mu_0)
+
C_3(T)\!\int_0^T\!\!\!\int
|\Lambda_N-\Lambda|\; d\bar\mu_s^N\, d\mu_0\, ds
\Bigg).
\end{align*}
By Proposition~\ref{prop:compacite}, the support of $\mu_t^{\xi,\zeta}$
is uniformly contained in $\tilde B_T(R_X,R_Y,R_M,\phi,\Lambda)$,
so that we may insert the cutoff $\chi_T$ in the above integral.
The convergence $\Lambda_N\to\Lambda$ follows from the Lebesgue
differentiation theorem, and the integrand is uniformly dominated by
\[
2C_\Lambda(1+|w|)\chi_T(x,y,w)\chi_T(\tx,\ty,\tw),
\]
which is integrable. The dominated convergence theorem therefore yields
\[
\lim_{N\to\infty}
\sup_{t\in[0,T]} d_1(\E\mu_t^N,\mu_t)=0,
\]
concluding the proof.
\end{proof}

\begin{rem}
In the case of a non-adaptive dynamical network, that is when $\Lambda = 0$,
the weights remain static along the dynamics.
Assuming moreover that the initial positions are independent
and that the weights are prescribed by a deterministic graphon
$w(\xi,\zeta)$, the solution $\mu_t$ admits the decomposition
\[
\mu_t^{\xi,\zeta}(x,y,w)
= \delta_{w(\xi,\zeta)}(dw)\,\mu_t^\xi(dx)\,\mu_t^\zeta(dy).
\]

As a consequence, a direct computation shows that the pushforward measure
$\Pi_{\xi,x}\#\mu_t$, where
\[
\Pi_{\xi,x}(\xi,\zeta,x,y,w) = (\xi,x)
\]
is the projection onto the first and third components, satisfies the classical
Vlasov-type equation for non-exchangeable particle systems:
\[
\partial_t \mu_t^\xi(x)
+ \nabla_x \cdot \left(
    \left(
        \int_{I \times \mathbb{R}^d}
        \phi(t,x,y)\,\mu_t^\zeta(dy)\,d\zeta
    \right)
    \mu_t^\xi(x)
\right)
= 0.
\]
This shows that the present framework naturally extends the classical
graphon-based Vlasov models to adaptive weighted interactions.

\end{rem}

\section{Links between the continuum limit and the mean-field limit} \label{sec:link}
{In this section, we switch to a completely deterministic setting. To emphasize the difference with the previous setting, we drop the capital and from now on denote $(x^N,w^N)$ the solution to the particle system \eqref{eq:particle_syst} where $x^N := (x_i^N)_{i \in \{1, \dots,N\}}$ and $w^N := (w_{iji}^N)_{i,j \in \{1, \dots,N\}}$.}
In what follows, we lift the restrictive assumption on $\Lambda$ introduced in Section \ref{sec:independance} and reinstate its dependence on the spatial variable $x$, in addition to the other variables on which it already depends.

As established in the previous sections, the mean-field limit yields the convergence of the microscopic system \eqref{eq:particle_syst} toward the solution of the Vlasov-type equation \eqref{eq:vlasov-equation}-\eqref{eq:vlasov-force} with weight dynamics $\Lambda$.  When considering the large-population limit, i.e., as $N \to \infty$, an alternative approach, commonly referred to as the continuum limit or graph limit, becomes available. This framework provides pointwise convergence of the solution to \eqref{eq:particle_syst} toward a function solving an integro-differential Euler-type equation. Whereas the connection between the particle system and the Vlasov-type equation is established through the graphon reformulation, in the continuum-limit framework it is obtained by constructing a suitable piecewise-constant function. 

We  introduce the specific construction required for our analysis. We define $x_N \in \mathcal{C}([0,T];L^\infty(I))$ and $w_N \in \mathcal{C}([0,T];L^\infty(I^2))$ as follows:
\begin{equation}\label{eq:step_functions}
\dsp x_N(t,\xi) = \sum_{i=1}^N x_i^{N}(t) \mathbf{1}_{I_i^N}(\xi), \, \dsp w_N(t,\xi,\zeta) = \sum_{i=1}^N\sum_{j=1}^N  w_{ij}^{N}(t) \mathbf{1}_{I_i^N}(\xi) \mathbf{1}_{I_j^N}(\zeta).
\end{equation}
An application of the result obtained in \cite{Throm_2024} yields the convergence to the following graph limit: 
\begin{equation} \label{eq:GL}
\left\{\begin{array}{lcl}
\dsp \partial_t x(t, \xi) &= &\dsp  \int_I w(t,\xi, \tz) \phi(t,x(t,\xi),x(t,\tz)) d\tz\\
\dsp \partial_t w(t, \xi, \zeta) &= &\dsp  \int_{I \times I} \Lambda (\xi, \zeta, x(t,\xi), x(t,\zeta), w(t,\xi,\zeta), x(t, \txi), x(t,\tz), w(t,\txi, \tz)) d\txi d\tz
\end{array} \right.
\end{equation}
 First, we define what we call the ``continuous'' empirical measure. 
\begin{definition}[``Continuous'' empirical measure] \label{def:cont_meas_emp} Let the interaction function $\phi$  satisfy Hypothesis \ref{hyp:phi}. Let the weight dynamics $\Lambda$ satisfy Hypotheses \ref{hyp:psi} and \ref{hyp:struct_lambda}. Let $x_0 \in   L^\infty(I;\R^d)$, $ w_0 \in   L^\infty(I^2)$ and $(x,w) \in \mathcal{C}([0,T];L^\infty(I;\R^d)) \times  \mathcal{C}([0,T];L^\infty(I^2)) $ be the the solutions to the graph limit equation \eqref{eq:GL} with weight dynamics $\Lambda$ and initial conditions given by $x(0, \cdot) = x_0$ and $w(0, \cdot, \cdot) = w_0$.  We define the ``continuous'' empirical measure
 as follows:
\begin{equation} \label{eq:cont_emp_meas}
\tilde \mu_t^{\xi,\zeta}(x,y,w) := \delta_{x(t,\xi)}(x) \delta_{x(t,\zeta)}(y) \delta_{w(t,\xi,\zeta)}(w).
\end{equation}
\end{definition}
As explained in our review paper \cite{AyiPouradierDuteil24}, we can actually connect the Vlasov   equation to the graph limit equation. We proceed analogously in our more complex case.
\begin{prop} \label{prop:tilde_vlasov}
Let $\tilde \mu_t$ be the measure defined in Definition \ref{def:cont_meas_emp}. Then, $\tilde \mu_t$ satisfies the Vlasov-type equation \eqref{eq:vlasov-equation}-\eqref{eq:vlasov-force}.
\end{prop}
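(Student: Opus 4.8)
The plan is to show that the ``continuous'' empirical measure $\tilde \mu_t$ defined in \eqref{eq:cont_emp_meas} is a distributional solution of \eqref{eq:vlasov-equation}-\eqref{eq:vlasov-force} by verifying the pushforward-along-characteristics characterization, namely condition (3) of Proposition~\ref{pro:distributional-solution-Vlasov-fibered}, rather than testing the weak formulation directly. The heart of the argument is a self-consistency check: I claim that for a.e. $\xi,\zeta\in I$ the curve $t\mapsto\bigl(x(t,\xi),x(t,\zeta),w(t,\xi,\zeta)\bigr)$ is exactly the characteristic flow $Z[\tilde\mu](t,\xi,\zeta,\cdot)$ issued from $\bigl(x_0(\xi),x_0(\zeta),w_0(\xi,\zeta)\bigr)$, where the driving force $F_\Lambda$ is evaluated against $\tilde\mu$ itself. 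Since $\tilde\mu_t^{\xi,\zeta}$ is the Dirac mass at this point and $\tilde\mu_0^{\xi,\zeta}=\delta_{(x_0(\xi),x_0(\zeta),w_0(\xi,\zeta))}$, this yields $\tilde\mu_t^{\xi,\zeta}=\mathcal{T}_t^{\xi,\zeta}[\tilde\mu]\#\tilde\mu_0^{\xi,\zeta}$, which is precisely the fixed-point identity \eqref{eq:fixed_point}, and the conclusion follows from Proposition~\ref{pro:distributional-solution-Vlasov-fibered}.

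The key step is the evaluation of the force \eqref{eq:vlasov-force} on the Dirac family. Inserting $\mu_t=\tilde\mu_t$ and the point $(x,y,w)=(x(t,\xi),x(t,\zeta),w(t,\xi,\zeta))$ into $F_\Lambda$, the sifting property of the Dirac masses $\tilde\mu_t^{\xi,\tz}=\delta_{(x(t,\xi),x(t,\tz),w(t,\xi,\tz))}$ collapses each integral: the first component reduces to $\int_I w(t,\xi,\tz)\,\phi(t,x(t,\xi),x(t,\tz))\,d\tz$, the second to $\int_I w(t,\zeta,\tz)\,\phi(t,x(t,\zeta),x(t,\tz))\,d\tz$, and the third to $\int_{I^2}\Lambda(\xi,\zeta,x(t,\xi),x(t,\zeta),w(t,\xi,\zeta),x(t,\txi),x(t,\tz),w(t,\txi,\tz))\,d\txi\,d\tz$. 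By the graph-limit system \eqref{eq:GL}, these three quantities are exactly $\partial_t x(t,\xi)$, $\partial_t x(t,\zeta)$ and $\partial_t w(t,\xi,\zeta)$, so the curve indeed solves the characteristic ODE \eqref{eq:charac} associated with $\tilde\mu$, with the correct initial datum.

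Before invoking the equivalence I would carry out the preliminary admissibility checks on $\tilde\mu$: measurability of $(\xi,\zeta)\mapsto\tilde\mu_t^{\xi,\zeta}$ (a Borel family, since $x(t,\cdot)$ and $w(t,\cdot,\cdot)$ are measurable), continuity of $t\mapsto\tilde\mu_t$ for $d_1$ (inherited from the continuity of the graph-limit solution $(x,w)$), and finiteness of the first moment, $\int|\tw|\,\tilde\mu_t^{\xi,\zeta}(d\tx,d\ty,d\tw)=|w(t,\xi,\zeta)|\le\|w(t,\cdot,\cdot)\|_{L^\infty(I^2)}$, which also supplies the essential-supremum bound \eqref{eq:moment_borne} required for Proposition~\ref{pro:well-posed-characteristics} to apply to $\tilde\mu$. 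With these in hand, the characteristic field for $\tilde\mu$ is well posed and Lipschitz, the curve identified above is its unique Carathéodory solution, and the pushforward identity holds. I do not expect a deep obstacle: the proof is essentially a verification, and the only genuinely delicate point is confirming that feeding the Dirac family into the $\mu$-dependent force $F_\Lambda[\tilde\mu_t]$ reproduces the nonlocal right-hand sides of \eqref{eq:GL} in a fully consistent manner, with no self-interaction or measurability pathology spoiling the Dirac evaluations; this is precisely what the sifting computation above, combined with the Lipschitz regularity of $\phi$ and $\Lambda$ from Hypotheses~\ref{hyp:phi} and~\ref{hyp:psi}, secures.
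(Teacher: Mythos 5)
Your proposal is correct, but it follows a genuinely different route from the paper. The paper's own proof is a direct verification of the weak formulation: it takes a test function $\varphi\in\mathcal{C}^1_c(I^2\times\R^d\times\R^d\times\R)$, computes $\frac{d}{dt}\int\varphi\,d\tilde\mu_t$ by the chain rule along $(x(t,\xi),x(t,\zeta),w(t,\xi,\zeta))$, substitutes the right-hand sides of the graph-limit system \eqref{eq:GL}, and then rewrites the resulting nonlocal integrals back in terms of $\tilde\mu_t$ via the Dirac structure, recognizing the weak form of \eqref{eq:vlasov-equation}. You instead verify the pushforward characterization (condition (3) of Proposition~\ref{pro:distributional-solution-Vlasov-fibered}): your sifting computation correctly shows that $F_\Lambda[\tilde\mu_t]$ evaluated at $(x(t,\xi),x(t,\zeta),w(t,\xi,\zeta))$ reproduces the right-hand sides of \eqref{eq:GL}, so the graph-limit trajectory is the unique Carathéodory characteristic issued from $(x_0(\xi),x_0(\zeta),w_0(\xi,\zeta))$, and the fixed-point identity \eqref{eq:fixed_point} follows since $\tilde\mu_0^{\xi,\zeta}$ is a Dirac mass. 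Each approach has its merits: the paper's computation is shorter and needs no machinery beyond the chain rule, whereas yours requires the extra admissibility checks you list (the moment bound \eqref{eq:moment_borne} via $\|w(t,\cdot,\cdot)\|_{L^\infty(I^2)}$, Borel measurability, $d_1$-continuity, and uniqueness of characteristics from Proposition~\ref{pro:well-posed-characteristics}) but yields a sharper structural statement — namely that the graph-limit trajectories \emph{are} the characteristics of the Vlasov equation associated with $\tilde\mu$ — which is precisely the conceptual link between the continuum limit and the mean-field limit that this section of the paper aims to illuminate. Do note that your argument leans on the equivalence of Proposition~\ref{pro:distributional-solution-Vlasov-fibered}, which the paper states with only a reference to standard arguments; since you invoke it as a black box, your proof is complete at the same level of rigor as the paper's.
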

\begin{proof}
For $\varphi \in \mathcal{C}_c^1(I^2 \times \R^d\times \R^d \times \R)$, we get 
\begin{equation}
\begin{array}{l}
\dsp \frac{d}{dt} \int_{I^2 \times \R^d \times \R^d \times \R} \varphi (\xi,\zeta,x,y,w) \tilde \mu_t^{\xi,\zeta}(dx,dy,dw) d\xi d \zeta \\
\dsp =  \frac{d}{dt}  \int_{I^2} \varphi (\xi,\zeta, x(t,\xi), x(t,\zeta), w(t,\xi,\zeta)) d\xi d\zeta \\
\dsp = \int_{I^2}  \nabla_x \varphi (\xi,\zeta, x(t,\xi), x(t,\zeta), w(t,\xi,\zeta)) \cdot   \left( \int_I w(t ,\xi, \tz) \phi(t,x(t,\xi),x(t,\tz)) d\tz\right) d\xi d\zeta \\
+ \dsp \int_{I^2}  \nabla_y \varphi (\xi,\zeta, x(t,\xi), x(t,\zeta), w(t,\xi,\zeta)) \cdot   \left( \int_I w(t,\zeta, \tz) \phi(t,x(t,\zeta),x(t,\tz)) d\tz\right) d\xi d\zeta \\
+ \dsp \int_{I^2}   \partial_w \varphi (\xi,\zeta, x(t,\xi), x(t,\zeta), w(t,\xi,\zeta)) \cdot  \left( \int_{I^2} \Lambda (\xi, \zeta, x(t,\xi), x(t,\zeta), w(t,\xi,\zeta), x(t, \txi), x(t,\tz), w(t,\txi, \tz))  d\txi d\tz \right) d\xi d \zeta \\
\dsp = \int_{I^2\times \R^d \times \R^d \times \R}  \nabla_x \varphi (\xi,\zeta, x,y,w) \cdot   \left( \int_{I \times \R^d \times \R^d \times \R}  \tw \phi(t,x,\ty)  \tilde \mu_t^{\xi,\tz}(d\tx,d\ty,d\tw)   d\tz \right) \tilde \mu_t^{\xi,\zeta}(dx,dy,dw) d\xi d\zeta \\
\dsp +  \int_{I^2\times \R^d \times \R^d \times \R}  \nabla_y \varphi (\xi,\zeta, x,y,w) \cdot   \left( \int_{I \times \R^d \times \R^d \times \R}  \tw \phi(t,y,\ty)  \tilde \mu_t^{\zeta,\tz}(d\tx,d\ty,d\tw)   d\tz \right) \tilde\mu_t^{\xi,\zeta}(dx,dy,dw) d\xi d\zeta \\
+ \dsp \int_{I^2 \times \R^d \times \R^d \times \R}   \partial_w \varphi (\xi,\zeta, x,y,w) \cdot  \left( \int_{I^2\times \R^d \times \R^d \times \R} \Lambda (\xi,\zeta, x,y,w, \tx,\ty,\tw) 
\tilde \mu_t^{\txi,\tz}(d\tx,d\ty,d\tw)  d\txi d\tz \right) \tilde \mu_t^{\xi,\zeta}(dx,dy,dw) d\xi d \zeta.
\end{array}
\end{equation}
Thus, we recover equation \eqref{eq:vlasov-equation}.
\end{proof}

In order to make the connection with the particle system, we recall the definition of the graphon reformulation
\begin{equation*}
\mu_t^{N,\xi,\zeta}(x,y,w) := \sum_{i=1}^N  \sum_{j=1}^N \mathbf{1}_{I_i^N}(\xi) \mathbf{1}_{I_j^N}(\zeta) \delta_{X_i^N(t)}(x) \delta_{X_j^N(t)}(y) \delta_{W_{ij}^N(t)}(w)
\end{equation*}
and we introduce the following measure 
\begin{equation}
\tilde \mu_t^{N,\xi,\zeta}(x,y,w) := \delta_{x_N(t,\xi)}(x) \delta_{x_N(t,\zeta)}(y) \delta_{w_N(\xi,\zeta)(t)}(w)
\end{equation} 
where  $\dsp x_N $ and $w_N$ have been defined in \eqref{eq:step_functions}. By a straightforward computation, we start by noticing that, for all test function, $\varphi \in \mathcal{C}^1_c(I^2 \times  \R^d \times \R^d \times \R)$, we have 
\begin{equation}\label{eq:equivmeas}
\int_{I^2 \times \R^d \times \R^d \times \R} \varphi(\xi,\zeta,x,y,w)   \mu_t^{N,\xi,\zeta}(dx,dy,dw)d\xi d\zeta = \int_{I^2 \times \R^d \times \R^d \times \R} \varphi(\xi,\zeta,x,y,w)  \tilde \mu_t^{N,\xi,\zeta}(dx,dy,dw)d\xi d\zeta.
\end{equation}
We can now compare the different measures and obtain the following result. 
\begin{theorem}\label{prop:convmu}
 Let the interaction function $\phi$  satisfy Hypothesis \ref{hyp:phi}. Let the weight dynamics $\Lambda$ satisfy Hypotheses \ref{hyp:psi} and \ref{hyp:struct_lambda}. Let $x_0 \in   L^\infty(I;\R^d)$, $ w_0 \in   L^\infty(I^2)$ and $(x,w)$ be the solution to the integro-differential system \eqref{eq:GL} with weight dynamics $\Lambda$ and initial conditions given by $x(0, \cdot) = x_0$ and $w(0, \cdot, \cdot) = w_0$. Let $(x^N,w^N) \in\mathcal{C}([0,T]; L^\infty(I;\R^d))  \times \mathcal{C}([0,T]; L^\infty(I^2) )$ be the  solution of the particle system \eqref{eq:particle_syst} with  initial conditions
\begin{equation}
\begin{cases}
\displaystyle x^{N,0} = \left( N \int_{I_i^N} x_0(\xi) d\xi \right)_{i \in \{1, \dots, N \}} \in (\R^d)^N \\
\displaystyle  w^{N,0} = \left( N \int_{I_i^N \times I_j^N} w_0(\xi,\zeta) d\xi d\zeta \right)_{i \in \{1, \dots, N \}} \in (\R)^N.
\end{cases}
\end{equation} 
Let $\mu^N$ be the graphon reformulation defined in \eqref{eq:empirical_measure} and $\tilde \mu$ be the ``continuous'' empirical measure defined in \eqref{eq:cont_emp_meas}. 
Then,  the empirical measure $\mu^N$  converges weakly to  the ``continuous'' empirical measure $\tilde{\mu}$, $\tilde{\mu}$ being a solution to the Vlasov-type equation \eqref{eq:vlasov-equation}-\eqref{eq:vlasov-force}. More precisely, for all test function $\varphi \in \mathcal{C}^1_c(I^2 \times  \R^d \times \R^d \times \R)$ and all $t\in [0,T]$, it holds 
\begin{equation*}
\lim_{N\rightarrow\infty} \int_{I^2 \times \R^d \times \R^d \times \R} \varphi(\xi,\zeta,x,y,w) (  \mu_t^{N,\xi,\zeta}(dx,dy,dw) - \tilde \mu_t^{\xi,\zeta}(dx,dy,dw))d\xi d\zeta  =0.
%\int_{\R^d} \varphi(x) d\mu^N_t(x) = \int_{\R^d} \varphi(x) d\tilde{\mu}_t(x) .
\end{equation*}
\end{theorem}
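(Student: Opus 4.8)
The plan is to reduce the desired weak convergence to the graph-limit convergence of the piecewise-constant interpolants $x_N,w_N$ towards the solution $(x,w)$ of \eqref{eq:GL}, which is available from \cite{Throm_2024}. First I would exploit the identity \eqref{eq:equivmeas}, which lets me replace the empirical measure $\mu_t^N$ by the measure $\tilde\mu_t^N$ built from the step functions. Indeed, in the present deterministic setting the two coincide exactly, since for $(\xi,\zeta)\in I_i^N\times I_j^N$ both reduce to the Dirac mass $\delta_{x_i^N(t)}\otimes\delta_{x_j^N(t)}\otimes\delta_{w_{ij}^N(t)}$. Since both $\tilde\mu_t^N$ and $\tilde\mu_t$ are products of Dirac masses supported at the interpolated, respectively continuum, states, the integral to be estimated collapses to
$$
\int_{I^2}\Bigl[\varphi\bigl(\xi,\zeta,x_N(t,\xi),x_N(t,\zeta),w_N(t,\xi,\zeta)\bigr)-\varphi\bigl(\xi,\zeta,x(t,\xi),x(t,\zeta),w(t,\xi,\zeta)\bigr)\Bigr]\,d\xi\,d\zeta.
$$

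Next, since $\varphi\in\mathcal{C}_c^1(I^2\times\R^d\times\R^d\times\R)$ is in particular Lipschitz in $(x,y,w)$, I would bound the integrand pointwise by
$$
[\varphi]_{\rm Lip}\bigl(|x_N(t,\xi)-x(t,\xi)|+|x_N(t,\zeta)-x(t,\zeta)|+|w_N(t,\xi,\zeta)-w(t,\xi,\zeta)|\bigr).
$$
Integrating over $I^2$ and using Fubini together with $|I|=1$, the first two contributions are each controlled by $\|x_N(t,\cdot)-x(t,\cdot)\|_{L^1(I)}$ and the last by $\|w_N(t,\cdot,\cdot)-w(t,\cdot,\cdot)\|_{L^1(I^2)}$.

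Finally I would invoke the graph-limit convergence established in \cite{Throm_2024}: under Hypotheses \ref{hyp:phi}, \ref{hyp:psi} and \ref{hyp:struct_lambda}, the interpolants $x_N$ and $w_N$ converge to the solution $(x,w)$ of \eqref{eq:GL}. The convergence of the initial data needed to apply that result follows from the fact that the prescribed initial conditions are exactly the cell-averages of $x_0$ and $w_0$, so that $x_N(0,\cdot)\to x_0$ and $w_N(0,\cdot,\cdot)\to w_0$ in $L^1$ (convergence of the piecewise-constant projections of $L^\infty$ data). Since $I$ has finite measure, whichever norm the solution convergence is obtained in dominates the $L^1$ norm, so each of the three terms above tends to $0$ as $N\to\infty$, for every fixed $t\in[0,T]$ (and uniformly on $[0,T]$ if the cited convergence is in $\mathcal{C}([0,T];\cdot)$). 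Combined with Proposition \ref{prop:tilde_vlasov}, which guarantees that $\tilde\mu$ solves \eqref{eq:vlasov-equation}-\eqref{eq:vlasov-force}, this yields the claimed weak convergence.

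The main obstacle is not in the Lipschitz estimate, which is routine, but in making sure that the hypotheses of the graph-limit theorem of \cite{Throm_2024} are genuinely satisfied by the full weight dynamics $\Lambda$ (now reinstated with its dependence on $x$) together with the specific cell-average initialization, and that the mode of convergence it provides is strong enough to dominate the $L^1$ distances appearing above; once this is secured, the rest is immediate.
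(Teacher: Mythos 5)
Your proposal is correct and follows essentially the same route as the paper's proof: reduce via \eqref{eq:equivmeas} to comparing the step-function measure $\tilde\mu_t^N$ with $\tilde\mu_t$, apply the Lipschitz bound on $\varphi$, invoke the graph-limit convergence of \cite{Throm_2024} (the paper passes through $L^2$ via Cauchy--Schwarz, you stay in $L^1$, which is equivalent here since $I$ has finite measure), and conclude with Proposition \ref{prop:tilde_vlasov}. The only cosmetic difference is the choice of norm in the final estimate, and your added care about the cell-average initialization matches what the paper implicitly relies on.
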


\begin{proof}
Let $\varphi\in \mathcal{C}_c^1(I^2\times \R^d\times \R^d\times \R)$.
By \eqref{eq:equivmeas} and by the definitions of $\tilde\mu_t^{N}$ and $\tilde\mu_t$,
\[
\int \varphi\, d\mu_t^{N} - \int \varphi\, d\tilde\mu_t
=
\int_{I^2}\Bigl[
\varphi\!\bigl(\xi,\zeta,x_N(t,\xi),x_N(t,\zeta),w_N(t,\xi,\zeta)\bigr)
-
\varphi\!\bigl(\xi,\zeta,x(t,\xi),x(t,\zeta),w(t,\xi,\zeta)\bigr)
\Bigr]\,d\xi\,d\zeta .
\]
Since $\varphi$ is $\mathcal{C}^1$ and compactly supported in $(x,y,w)$, it is Lipschitz in these variables and
\[
\Bigl|\varphi(\xi,\zeta,x_1,y_1,w_1)-\varphi(\xi,\zeta,x_2,y_2,w_2)\Bigr|
\le \|\nabla_{x,y,w}\varphi\|_{L^\infty}\bigl(|x_1-x_2|+|y_1-y_2|+|w_1-w_2|\bigr).
\]
Therefore,
\begin{align*}
\left|\int \varphi\, d\mu_t^{N} - \int \varphi\, d\tilde\mu_t\right|
&\le \|\nabla_{x,y,w}\varphi\|_{L^\infty}
\int_{I^2}\bigl(|x_N(t,\xi)-x(t,\xi)|+|x_N(t,\zeta)-x(t,\zeta)|
+|w_N(t,\xi,\zeta)-w(t,\xi,\zeta)|\bigr)\,d\xi\,d\zeta \\
&\le \|\nabla_{x,y,w}\varphi\|_{L^\infty}
\Bigl(2\|x_N(t,\cdot)-x(t,\cdot)\|_{L^2(I)}
+\|w_N(t,\cdot,\cdot)-w(t,\cdot,\cdot)\|_{L^2(I^2)}\Bigr),
\end{align*}
where we used Cauchy--Schwarz and $|I|=|I^2|=1$.
Taking the supremum over $t\in[0,T]$ and using the graph limit result of \cite{Throm_2024}
(which yields $\|x_N-x\|_{C([0,T];L^2(I;\R^d))}+\|w_N-w\|_{C([0,T];L^2(I^2))}\to 0$ as $N$ goes to $\infty$),
we obtain the desired convergence. Finally, Proposition~\ref{prop:tilde_vlasov} ensures that
$\tilde\mu$ solves \eqref{eq:vlasov-equation}--\eqref{eq:vlasov-force}.
\end{proof}

\begin{rem}
This approach thus yields an alternative proof of the derivation of the Vlasov-type equation, relying on the graph limit as an intermediate step. It should be emphasized that the analysis is restricted to a deterministic setting. On the other hand, the additional restrictive assumption on $\Lambda$,  namely, the absence of dependence on the spatial variable $x$, is no longer required.
\end{rem}

\paragraph*{Acknowledgements} This material is based upon work supported by the National Science Foundation under Grant No. DMS-2424139, while the author was in residence at the Simons Laufer Mathematical Sciences Institute in Berkeley, California, during the Fall 2025 semester.

\bibliography{Biblio_MFL}

\end{document}